\definecolor{mycolor}{rgb}
{0.796, 0.255, 0.329}
\tikzstyle{vertex}=[circle, draw, inner sep=2pt, fill=white]
\newcommand{\e}{{\varepsilon}}
\newcommand{\E}{{\mathbb E}}
\renewcommand{\P}{{\mathbb P}}
\newcommand{\C}{{\mathbb C}}
\newcommand{\R}{{\mathbb R}}
\newcommand{\N}{{\mathbb N}}
\newcommand{\X}{{\mathbb X}}
\newcommand{\Acal}{{\mathcal A}}
\newcommand{\Bcal}{{\mathcal B}}
\newcommand{\Dcal}{{\mathcal D}}
\newcommand{\Ecal}{{\mathcal E}}
\newcommand{\Fcal}{{\mathcal F}}
\newcommand{\Gcal}{{\mathcal G}}
\newcommand{\Lcal}{{\mathcal L}}
\newcommand{\Ucal}{{\mathcal U}}
\newcommand{\Ncal}{{\mathcal N}}
\newcommand{\Rcal}{{\mathcal R}}
\newcommand{\Scal}{{\mathcal S}}
\newcommand{\Tcal}{{\mathcal T}}
\newcommand{\Vcal}{{\mathcal V}}
\newcommand{\Wcal}{{\mathcal W}}
\newcommand{\Xx}{\X}
\renewcommand{\a}{\mathbf a}
\renewcommand{\b}{\mathbf b}
\newcommand{\x}{\mathbf x}
\renewcommand{\u}{\mathbf u}
\renewcommand{\v}{\mathbf v}
\newcommand{\Ss}{\Tcal(S)}
\renewcommand{\gg}{h}
\newcommand{\rub}{r}
\newcommand{\pr}[2]{\pi_{#2}({#1})}
\newcommand{\co}[2]{{#1}_{#2}}
\newcommand{\el}[2]{{#1}^{#2}}
\newcommand{\tru}[2]{{#1}^{\leq #2}}
\newcommand{\Span}{\textup{span}}
\newcommand{\fdot}{{\,\cdot\,}}
\DeclareMathOperator{\tr}{Tr}
\newtheorem{theorem}{Theorem}
\newtheorem{scheme}[theorem]{Scheme}
\newtheorem{corollary}[theorem]{Corollary}
\theoremstyle{definition}
\newtheorem{definition}[theorem]{Definition}
\newtheorem{remark}[theorem]{Remark}
\newtheorem{example}[theorem]{Example}
\newtheorem{lemma}[theorem]{Lemma}
\newtheorem{proposition}[theorem]{Proposition}
\numberwithin{equation}{section}
\numberwithin{theorem}{section}
\begin{document}

\title{Signature SDEs from an affine and polynomial perspective}
\date{}
\author{%
  Christa Cuchiero\thanks{Vienna University, Department of Statistics and Operations Research, Data Science @ Uni Vienna, Oskar-Morgenstern-Platz 1, A-1090 Wien, Austria.
    {christa.cuchiero@univie.ac.at}}
  \and 
  Sara Svaluto-Ferro\thanks{University of Verona, Department of Economics,
		Via Cantarane 24, 37129 Verona, Italy. {sara.svalutoferro@univr.it}}
        \and
        Josef Teichmann \thanks{ETH Zurich, Department of Mathematics, R\"amistrasse 101, 8092 Zurich, Switzerland. {josef.teichmann@math.ethz.ch}
        \newline
The first author gratefully acknowledges financial support by the FWF START grant Y 1235.\newline
\textbf{Published version:}
This article has been published in the \emph{Electronic Journal of Probability}. The final published version is available at
\url{https://doi.org/10.1214/26-EJP1553}.
        }}
        
\maketitle
\begin{abstract}
Signature stochastic differential equations (SDEs) constitute a large class of stochastic processes, here driven by Brownian motions, whose characteristics are  linear maps of their own signature, i.e.~of iterated integrals of the process with itself, and therefore allow for a generic path dependence. We show that their prolongation with the corresponding signature is an affine and polynomial process taking values in the set of group-like elements of the extended tensor algebra. By relying on duality theory for affine or polynomial processes, 
we obtain explicit formulas in terms of converging power series for the Fourier-Laplace transform and the expected value of entire functions of the signature process' marginals.

The coefficients of these power series are solutions of Riccati and linear ordinary differential equations (ODEs) with values in the extended tensor algebra, respectively, whose vector fields can be expressed in terms of the characteristics of the corresponding SDEs.  We thus construct a class of stochastic processes that is universal (in a sense specified in the introduction) within It\^o-diffusions with path-dependent characteristics and allows for an explicit characterization of the Fourier-Laplace transform and hence the full law on path space. 
The practical applicability of this affine and polynomial approach is illustrated by several numerical examples.
\end{abstract}

\noindent\textbf{Keywords:} Signature SDEs; affine and polynomial processes;  entire functions on path spaces and groups; power series expansions for Fourier-Laplace transforms;  expected signature\\
\noindent \textbf{MSC (2020)}: 60G20; 60L10; 58K20; 60L70

\tableofcontents

\section{Introduction}

A plethora of stochastic models used in diverse areas, like mathematical finance, biology or physics, stems from the class of affine and polynomial processes. This is not always visible at first sight, because many stochastic models only appear as projections of infinite dimensional processes, and the corresponding affine or polynomial lift is often not the first object of attention.

Recognizing whether a Markovian semimartingale model falls in the class of \emph{finite dimensional affine or polynomial processes,} as systematically introduced in \cite{DFS:03} and \cite{CKT:12, FL:16}, respectively, is usually simple and consists in verifying the properties of its semimartingale characteristics. 
Indeed, in a diffusion setup, the polynomial property holds true if the drift is affine and if the instantaneous covariance matrix is at most quadratic in the state variables. If the latter is also affine, then 
the affine property is additionally satisfied.
Even though this appears to be a rather narrow definition, solely the finite dimensional setting already contains many well known processes, e.g.~L\'evy processes, Ornstein-Uhlenbeck processes, Feller-type diffusions, Wishart processes (see \cite{B:91, CFMT:11}), the Black-Scholes model,  the Fisher-Snedecor process, or the Wright-Fisher diffusion, also called Kimura diffusion (see \cite{K:64}) or Jacobi process, as well as all possible combinations thereof.

Many of those processes also have \emph{infinite dimensional} analogs, which can often be realized as \emph{measure valued processes}, in particular when their state space corresponds to some positive cone or bounded domain, such as probability measures.  We refer to \cite{L:11, CLS:19, CDGS:22}, where it is shown that many well-known examples from the literature, including the Dawson-Watanabe superprocess and the Fleming-Viot process, are of affine or polynomial type.
Another possibility to move to an infinite dimensional setup is to consider \emph{Hilbert space valued processes} which have recently been treated in \cite{STY:20, CKK:22} for the affine case.

Beyond these measure valued and Hilbert space valued processes,  a wide variety of finite dimensional models, in particular non-Markovian ones,  appear as projections of infinite dimensional affine or polynomial processes. This applies in particular to \emph{Markovian lifts} of stochastic Volterra processes (see e.g.~\cite{P:85}) and Hawkes processes (see \cite{H:71}). As a consequence many  examples in the field of rough volatility and rough covariance modeling, such as the rough Heston model (see~\cite{ER:19}) or the rough Wishart processes (see~\cite{CT:19, AJ:22})
can be viewed as infinite dimensional affine or polynomial processes, as shown e.g.~in \cite{AE:19, CT:18}. 

Such Markovian lifts are of course not only restricted to Volterra-type processes, but can generically be applied by \emph{linearizing the models' characteristics}, making a multitude of stochastic processes a projection of sequence-valued affine and polynomial processes. This suggests a certain \emph{universality of the affine and polynomial class} in the space of \emph{all} stochastic processes driven by Brownian motion (or also other semimartingales, see \cite{CPS:24}). This universality comes at the cost of more involved infinite dimensional state spaces, such as $\{(x^n)_{n \in \mathbb{N}} \colon x \in \mathbb{R} \} \subset \mathbb{R}^\mathbb{N}$, than the canonical one $\mathbb{R}_+^m \times \mathbb{R}^n$ considered initially in \cite{DFS:03}.

One purpose of this article is to make certain aspects of this universality mathematically precise. Indeed, we shall ``linearize'' generic classes of stochastic differential equations, called \emph{signature SDEs} (introduced precisely in Section~\ref{sec:SIGSDE}), with the goal to develop an affine and polynomial theory for their prolongations. A signature SDE is an It\^o-diffusion  of the form
\begin{align}\label{eq:SigSDEintro}
dX_{t}={b}(\mathbb{X}_t) dt + \sqrt{{a}(\mathbb{X}_t)}dB_t,  \quad X_0=x, \tag{Sig-SDE}
\end{align}
where ${b}$ and ${a}$ are  \emph{entire maps of group-like elements} in the extended tensor algebra, the state space of the signature process $\mathbb{X}$, rigorously introduced in Section~\ref{sec22}. For the precise definition of entire maps we refer to Section~\ref{sigstatespace} and Definition~\ref{def:entire}. Note that in contrast to the usual setup where only finite or formal sums of signature components are considered\footnote{Notable exceptions are the works \cite{arous1989flots, KP:92, LO:14, DT:22} on 
stochastic Taylor series expansions and the recent paper by \cite{jaber2024path}.}, we deal here with a novel and proper notion of entire maps, being appropriately converging infinite linear combinations of group-like elements.

The aforementioned universality of signature SDEs holds on the level of the characteristics as all continuous path functionals can be approximated by linear and thus entire functions of the time-extended signature on compacts (see e.g. \cite{CGS:22}). By adding the signature of the corresponding  time-extended process  to the state variables, this universality then translates to affine and polynomial processes, as we can show that the
prolonged process is affine and polynomial on the state space of group-like elements.  

To prove this, we first consider in Section~\ref{subsec:affine} affine and polynomial processes on general subsets of the extended tensor algebra. Based on the duality point of view summarized in Section~\ref{sec:simpleaffpoly} for the simple one-dimensional case, we derive under certain integrability conditions the \emph{affine transform formula} as well as the \emph{moment formula}. The affine transform formula, which computes the Fourier-Laplace transform of the state vector, can be expressed either in terms of an \emph{extended tensor algebra valued Riccati ordinary differential equation (ODE)} (see Theorem~\ref{thm:affinetrans2}) or in terms of the corresponding \emph{transport partial differential equation (PDE)} (see Theorem~\ref{thm:affinetrans}). The latter has the advantage that zeros of the Fourier-Laplace transform can be treated without leading to exploding solutions of the Riccati ODEs. In the case of the moment formula, derived via polynomial technology, an \emph{extended tensor algebra valued linear ODE} has to be solved to obtain expressions for the moments and converging infinite sums thereof (see Theorem~\ref{thm:momentpoly}).

In Section~\ref{sec:SIGSDE} we apply this theory to the signature process of signature SDEs, a setting where affine and polynomial processes coincide (see Proposition~\ref{prop2}). 
Our main results (see Theorem~\ref{th:uniaffine} and  Theorem~\ref{eq:expectedsig}) can be summarized as follows:

\begin{theorem}\label{th:intro}
Let $X$ be given by \eqref{eq:SigSDEintro}.
Then, its signature process $\mathbb{X}$ is an affine and polynomial process on the state space of group-like elements, which satisfies under  several technical conditions,
\[
\mathbb{E}[\exp(\langle \mathbf{u}, \mathbb{X}_T  \rangle)]= \exp(\langle {\bm \psi}(T), \mathbb{X}_0 \rangle), 
\qquad \mathbb{E}[\langle \mathbf{u}, \mathbb{X}_T \rangle ]= \langle \mathbf{c}(T) ,  \mathbb{X}_0 \rangle,
\]
where ${\bm \psi}$  and ${\bf c}$ 
are solutions of  the tensor-algebra valued Riccati ODE  and linear ODE 
\begin{equation*}
 {\bm \psi}(t)=\u+\int_0^tR({\bm \psi}(s))ds, \qquad \mathbf{c}(t) =\u+\int_0^t L(\mathbf{c}(s))ds,
\end{equation*}
with $R$ and $L$  specified in \eqref{eq:R} and  \eqref{eq:L}, respectively.
\end{theorem}
The notation $\langle \mathbf{u}, \mathbf{x} \rangle$  is used for entire maps of group-like elements $\mathbf{x}$ and appropriate dual ones $\mathbf{u}$, see \eqref{eq:uxana} and Definition \ref{def:entire}.
Theorem \ref{th:intro} thus gives novel explicit formulas in terms of converging power (or rather linear) series for both, the Fourier-Laplace transform and the expected value of entire functions of the signature. 
This implies in particular that we can compute the \emph{expected signature} for all (appropriately integrable) signature SDEs by solving an infinite dimensional linear ODE.
{By explicit formulas we here mean the  (theoretical infinite dimensional) solution of these ODEs for which we also provide some
numerical schemes in Section \ref{sec:numerics}.}
We   express the vector fields $R$ and $L$ of the Riccati ODEs and the linear ODEs in terms of the characteristics of the corresponding SDEs (see \eqref{eq:R} and \eqref{eq:L}), yielding similar structures as in finite dimensions. Indeed we get quadratic expressions for the Riccati equations and linear ones  for the linear ODEs. In certain specific cases of signature SDEs, including classical polynomial processes, the results lead to a representation of the expected \emph{truncated} signature involving only a finite dimensional linear ODE. The corresponding solution can thus be computed via a finite dimensional matrix exponential (see Example~\ref{ex4}). 

As a special case we consider in Section~\ref{sec5}  one-dimensional signature SDEs which translate -- using a reparametrization of the signature  --  to \emph{classical SDEs with real-analytic characteristics}. The prolongation of these processes are then simply sequences of monomials that are again affine and polynomial processes whose corresponding deterministic dual processes are sequence-valued Riccati and linear ODEs, respectively. In this setup we can give concrete specifications under which the required integrability conditions for the respective transform formulas hold true.

Both, in the one-dimensional and the general setup, our approach efficiently calculates time-dependent expansion coefficients of solutions of the Kolmogorov equations, i.e., certain parabolic partial differential equations (or their Cole-Hopf transforms), where expansion is meant with respect to state variables, here the signature components. {In the one-dimensional case this is nothing else than a power-series expansion in the space variable for the solution of the Kolmogorov equation, whence implying its real analyticity in space. This is  reminiscent of Cauchy-Kovalevskaya theory, where, however, analyticity both in time and space is obtained, and which in the current framework is only applicable when the diffusion coefficient is identically zero. Even for the heat equation there are simple counterexamples showing that analyticity in time  (at 0) of the solution of the Kolmogorov equation can fail. Since we only deal with analyticity in space the class of SDEs for which this holds is much larger. We work this out in detail in the one-dimensional setting for SDEs with real analytic characteristics, see Section \ref{sec:concrete}, where we provide in particular existence and uniqueness results for the infinite dimensional linear ODEs and the  Riccati ODEs. Even though we state the results for simplicity only in the one-dimensional case, the results easily translate to multivariate settings, in particular to signature SDEs where the coefficients only depend on finitely many signature components. 
Apart from several sufficient conditions, e.g., uniform ellipticity (see Section \ref{sec:elliptic}),  that guarantee  the full validity of the moment and affine transform formulas (locally), we also provide an SDE with entire characteristics as counterexample, where real-analyticity of the Kolmogorov equation in space does not hold at degeneracy points, see Section \ref{ex:counterexample}.
}

On a different note, observe that we can characterize laws on path spaces by means of the signature \emph{without} exponential moment conditions (see~\cite{CO:22} and the references therein for related considerations with normalized signature), since we can work with Fourier-Laplace transforms.
We use the fact that a time-extended path $t \mapsto \widehat{X}_t(\omega)$ for $t \in [0,T]$
is uniquely characterized by its signature
at time $T$, denoted by $\widehat{\mathbb{X}}_T(\omega)$. The set
  \[
 \left\lbrace \mathbb{E}[\exp(\text{i} u \langle e_I, \widehat{\mathbb{X}}_T \rangle) ]\colon I \in \lbrace 0,\ldots,d \rbrace^n, \, n \in \mathbb{N}_0, u \in \mathbb{R} \right\rbrace
 \]
therefore characterizes the full law of the stochastic process $X$ on path space and is determined via the Riccati ODEs in the case of signature SDEs. For the notation with basis elements $e_I \in (\mathbb{R}^d)^{\otimes n}$ we refer to Section \ref{sec22}.
Note that this characteristic function differs from the notion used in \cite{Chevyrev16} or \cite{lou2023pcf}.

To illustrate the practical applicability of our affine and polynomial approach, e.g. in view of computing such path characteristic functions, we solve in Section~\ref{sec:Riccatinum} the infinite dimensional Riccati ODEs and transport PDEs to compute the Laplace-transform of a geometric Brownian motion as well as the Laplace transform of the fourth power of a Brownian motion. These cases are particularly challenging from a numerical point of view since exponential moments do not exist.
We also implement the linear ODEs to obtain the moment generating function of a Jacobi diffusion (see Section~\ref{ex1}).

Our theoretical and numerical results have important implications on several modern models that are especially relevant in machine learning contexts. This concerns in particular neural SDEs (with entire activation functions), as considered e.g.~in~\cite{GSSSZ:20, CKT:20, KFLL:21, SLG:22} and signature-based models in the spirit of \cite{PSS:20, LNP:20, NSSBWL:21, SLLHDL:21, CGS:22, CPS:22, CGMS:23, abi2025signature}. Both types of models can be cast in the current framework of signature SDEs and thus the Fourier-Laplace transform and the moments (as well as the expected signature) can be computed by affine and polynomial technology.
{
 We refer in particular to \cite{abi2025signature}, where numerical schemes for solving the truncated Riccati ODEs stemming from specific signature models in finance have  been  implemented.}


In this respect note that our expressions for the Fourier-Laplace transform and the expected signature in terms of power (or rather linear) series in the signature process are related to several recent works on signature cumulants and expected signature, in particular \cite{FHT:22, FGR:22, FG:22, FM:21, BDMN:21, RFC:22, BO:20, AGR:20}. For earlier work on expected signatures  in particular for the Brownian case and classical diffusion processes  we refer to \cite{FW:03, N:12,LN:15} and \cite{FS:17} for L\'evy processes.

The remainder of the article is organized as follows: in Section~\ref{sec:preliminary} we introduce affine and polynomial processes in a one-dimensional diffusion setup as well as the signature process of continuous semimartingales. Section~\ref{subsec:affine} is dedicated to affine and polynomial processes on general subsets of the extended tensor algebra, while in Section~\ref{sec4} we apply this theory to signature SDEs. 
We conclude the paper with one-dimensional signature SDEs, corresponding to classical SDEs with  real-analytic characteristics (see Section~\ref{sec5}), as well as  several examples and numerical illustrations thereof (see Section~\ref{sec:numerics}).

\section{Preliminaries}\label{sec:preliminary}

This section introduces affine and polynomial processes in the simplest possible setup, i.e.~the \emph{one}-dimensional setting. We also recall the fundamental notion of  signature in the setting of continuous semimartingales.

\subsection{Affine and polynomial processes from a duality point of view} \label{sec:simpleaffpoly}
Consider on a filtered probability space $(\Omega, \mathcal{F}, (\mathcal{F}_t)_{t\in[0,T]}, \mathbb{P})$, an It\^o diffusion process $(X_t)_{t\in[0,T]}$ of the form
\begin{align}\label{eq:poly}
dX_t=b(X_t)dt + \sqrt{a(X_t)} dB_t, \qquad X_0=x\in \R
\end{align}
with $a: \mathbb{R} \to \mathbb{R}_+$ and $b:\mathbb{R} \to \mathbb{R}$ continuous functions and $B$ a standard Brownian motion. 

\begin{definition}\label{def:polaff}
A weak solution $X$ of \eqref{eq:poly} is called \emph{polynomial process} if $b$ and $a$ admit the representations $b(x)=b_0+b_1 x$  and $a(x)=a_0+a_1 x+ a_2x^2$, respectively, for some constants $b_0,b_1,a_0,a_1$, and $a_2$.
If additionally $a_2=0$, then the process is called affine.
\end{definition}
Note that in this diffusion setting all affine processes are polynomial. In general this only holds true under auxiliary conditions on the moments of the  jump measure.
\begin{remark}
Alternatively, such processes can be equivalently defined via the martingale problem approach (as we will do later on in Section~\ref{subsec:affine}). {In particular, setting 
 \begin{equation}\label{eq:polygen}
\mathcal{A}f(x)=f'(x)b(x)+ \frac{1}{2} f''(x) a(x),
\end{equation}
it holds that 
the process
\[
N_t^f:=f(X_t)-f(X_0)- \int_0^t \mathcal{A}f (X_s) ds
\]
is a local martingale for every $ f\in C^2_b(\R)$ if and only if $X$ is a weak solution of \eqref{eq:poly}.}
\end{remark}

Processes satisfying Definition~\ref{def:polaff} have remarkable properties, namely that all marginal moments of a polynomial process, i.e.~$\mathbb{E}[X_t^n]$ can be computed by solving a system of linear ODEs. In the case of affine processes, we additionally get that exponential moments $\mathbb{E}[e^{uX_t}]$ for $u \in \mathbb{C}$ can be expressed in terms of solutions of Riccati ODEs whenever $\mathbb{E}[|e^{uX_t}|]< \infty$.

Here, we present these implications from the point of view of dual processes (see e.g. Chapter 4  in \cite{EK:93}),
which has not been considered in this form in the original papers  \cite{DFS:03} and \cite{CKT:12, FL:16}, where affine and polynomial processes were first systematically introduced. 
Denote again by $\mathcal{A}$  the (extended) generator  given by \eqref{eq:polygen}.
We  can then distinguish two different ways  to compute $\mathbb{E}_x[f(X_t)]$. They are given by 
\begin{enumerate}
\item the Kolmogorov backward equation: $\mathbb{E}_x[f(X_t)]=g(t,x)$, where
\[
\partial_t g(t,x)=\mathcal{A}g(t,x), \quad g(0,x)=f(x).
\]
\item\label{itemii} the duality method: 
let $(U_t)_{t\in[0,T]}$ be an independent Markov process with state space $U$ and infinitesimal generator $\mathcal{B}$. Assume that there is some $f: S \times U  \to \mathbb{R}$ such that
\[
\mathcal{A} (f(\cdot, u))(x) =\mathcal{B} (f(x, \cdot))(u), \quad \text{for all } x \in S,\, u \in U,\]
then (modulo technicalities)
$
\mathbb{E}_x[f(X_t,u)]=\mathbb{E}_u[f(x,U_t)].
$
This duality relationship can then be used as a numerical method if the Kolmogorov backward equation for the dual process, i.e.
\begin{align}\label{eq:KBdual}
\partial_t v(t,u)=\mathcal{B}v(t,u), \quad v(0,u)=f(x,u)
\end{align}
with
$v(t,u)=\mathbb{E}_x[f(X_t,u)]$ is easy to solve or if the dual process
$U$ is easy to simulate or even deterministic.
\end{enumerate}

The latter is the case for affine and polynomial processes, because (one choice of) the dual process is deterministic and corresponds to the solution of ODEs, explaining the often stated tractability properties.
In the case of affine processes, the function family consists of \emph{exponentials},
 in the case of polynomial processes they are \emph{polynomials}.
Since these families are distribution determining (in the polynomial case under exponential moment conditions), existence of the dual process implies uniqueness of the solution of the martingale problem for the primal process $X$.

\paragraph{Affine case}
As we shall only treat \emph{linear processes} below, meaning that the characteristics are linear instead of affine, we  here also set $b_0=0$ and $a_0=0$ in the definition of the affine process. The corresponding state space is given by $\R_+$.

The \emph{dual affine operator} $\mathcal{B}$ acting on $u \mapsto \exp(ux)$ is  defined via
\[
\mathcal{B}(\exp( \fdot x))(u):=\mathcal{A} (\exp(u \fdot))(x),
\]
for each $x\in S$, which explicitly reads $\Bcal g(u)=R(u)g'(u)$ for $g=\exp(\cdot x)$ and
\begin{equation*}
R(u):=\frac{1}{2}a_1 u^2 +b_1 u.
\end{equation*}
From this we can see that
$\mathcal{B}$ is the restriction of the  transport operator and 
the corresponding (deterministic) dual process is a solution of the following ODE
\begin{align}\label{eq:Riccati1d}
\partial_t \psi(t)= R(\psi(t)), \quad \psi(0)=u,\qquad t\in[0,T],
\end{align}
which is of Riccati type due to the quadratic form of $R$.
The duality method \ref{itemii} with the deterministic dual process $\psi$  yields the original formulation of the affine transform formula while \eqref{eq:KBdual} for computing expectations via  the Kolmogorov equation for the dual process, leads to the corresponding transport equation formulation.
\begin{theorem}\label{th:affineRic1d}
Let $T >0$ be fixed and let $(X_t)_{t\in[0,T]}$ be a {linear} process. Let $u \in \mathbb{C}$ such that $\mathbb{E}[|\exp(u X_T)|] < \infty$.
Then,
$
\mathbb{E}_x[\exp(u X_T)]=\exp(\psi(T)x),
$
where $\psi$ solves the Riccati ODE given by \eqref{eq:Riccati1d}. Moreover,
$
\mathbb{E}_x[\exp(u X_T)]=v(T,u),
$
where $v(t,u)$ is a solution to the following linear PDE of transport type
\begin{align*}
\partial_t v(t, u) &=\mathcal{B}  v(t, u)=  R(u) \partial_{u} v(t, u), \quad 
v(0,u)= \exp(ux), \quad t \in [0,T].
\end{align*}

\end{theorem}

Note that the advantage of the transport formulation is that $v$ can assume the value $0$, which necessarily corresponds to an explosion of $\psi$. In the current 
one-dimensional setup this cannot occur, but on the infinite dimensional  state spaces that we consider below this can become an issue.
In Section~\ref{subsec:affine} we shall generalize both results to affine processes on subsets of the extended tensor algebra $T((\mathbb{R}^d))$ introduced below. They will then be applied to  signature SDEs in Section~\ref{sec4}.

\paragraph{Polynomial case}
Fix $k\in \N$ and 
denote by $\mathcal{P}_k:=\{x \mapsto \sum_{i=0}^k u_i x^i \colon u_i \in \mathbb{R} \}$ of polynomials up to degree $k$.
Moreover, for a vector of coefficients
$u=(u_0,\ldots,u_k)^{\top} \in \mathbb{R}^{k+1}$ we define 
$
p(x,u):= \sum_{i=0}^k u_i x^i
$.

Note that for a polynomial process, the generator $\mathcal{A}$ maps
 $\mathcal{P}_k$ to  $\mathcal{P}_k$, which is an alternative defining property of polynomial processes. Define a \emph{dual polynomial operator $\mathcal{B}$}
 acting on $u \mapsto p(x,u)$ as
\[
\mathcal{B}(p(x,\cdot))(u):=\mathcal{A}( p(\cdot,u))(x),
\]
for every $x\in S$, which explicitly reads
$\mathcal{B}g(u) =(L_ku)^\top \nabla g(u) $
for $L_k\in  \mathbb{R}^{(k+1)\times (k+1)}$  such that
\begin{align*}
\mathcal{A}(p(\cdot,u))(x) =p(x,L_ku).
\end{align*}
The corresponding $\mathbb{R}^{k+1}$-valued (deterministic) dual process solves the linear ODE
\begin{align}\label{eq:linearODE1d}
\partial_t c(t)=L_k c(t), \quad c(0)=u,\qquad t\in [0,T], 
\end{align}
which describes the evolution of the coefficients vector.
Applying the duality method \ref{itemii}, thus yields the following theorem, which was initially proved in \cite{CKT:12, FL:16}.

\begin{theorem} \label{th:poly}
Fix $T >0$ and let $(X_t)_{t\in[0,T]}$ be a polynomial process. Denote by $c(t)=(c_{0}(t), \ldots, c_{k}(t))^{\top}$  the solution of  the linear ODE 
\eqref{eq:linearODE1d}.
Then,
$
\mathbb{E}_x[\sum_{i=0}^k u_iX_T^i]= \sum_{i=0}^kc_{i}(T)  x^i.
$

\end{theorem}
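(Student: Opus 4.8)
The plan is to prove the moment formula by constructing an explicit martingale that interpolates between the two sides of the claimed identity, with the dual linear ODE chosen precisely so that the drift cancels. Fix $T > 0$ and let $c(t) = (c_0(t), \ldots, c_k(t))^{\top}$ solve \eqref{eq:linearODE1d} with $c(0) = u$. I would define the process
\[
M_t := p(X_t, c(T-t)) = \sum_{i=0}^k c_i(T-t)\, X_t^i, \qquad t \in [0,T].
\]
Its endpoint values are exactly the two quantities appearing in the statement: at $t = T$ we get $M_T = p(X_T, c(0)) = \sum_{i=0}^k u_i X_T^i$, while at $t = 0$ we get $M_0 = p(x, c(T)) = \sum_{i=0}^k c_i(T)\, x^i$. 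Hence the theorem reduces to showing $\mathbb{E}_x[M_T] = M_0$, which is the rigorous incarnation of the duality method \ref{itemii} and follows once $M$ is shown to be a true martingale.

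First I would apply It\^o's formula to $M_t$. The time dependence of the coefficient vector contributes $\sum_i \tfrac{d}{dt}\big(c_i(T-t)\big) X_t^i\, dt = -\,p(X_t, L_k c(T-t))\, dt$, using the chain rule together with the ODE $\partial_t c = L_k c$. The generator part contributes $\mathcal{A}\big(p(\cdot, c(T-t))\big)(X_t)\, dt = p(X_t, L_k c(T-t))\, dt$, using the defining relation $\mathcal{A}(p(\cdot, v)) = p(\cdot, L_k v)$, which is available precisely because the polynomial property of Definition~\ref{def:polaff} guarantees that $\mathcal{A}$ maps $\mathcal{P}_k$ into $\mathcal{P}_k$, so that $L_k$ is a well-defined linear map on $\mathbb{R}^{k+1}$. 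These two drift terms cancel exactly, leaving only the stochastic integral
\[
dM_t = \sum_{i=0}^k c_i(T-t)\, i\, X_t^{i-1} \sqrt{a(X_t)}\, dB_t,
\]
so $M$ is a local martingale.

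The main obstacle is upgrading $M$ from a local to a true martingale, which is where integrability enters (the ``modulo technicalities'' of method \ref{itemii}). I would introduce a localizing sequence of stopping times $\tau_n$, obtain $\mathbb{E}_x[M_{T \wedge \tau_n}] = M_0$, and then pass to the limit $n \to \infty$; this requires uniform integrability of the family $\{M_{T \wedge \tau_n}\}_n$. This is supplied by moment bounds for the polynomial process: a standard Gronwall argument applied inductively to $t \mapsto \mathbb{E}_x[X_{t \wedge \tau_n}^{2j}]$, exploiting that $\mathcal{A}(x^{2j})$ is again a polynomial of degree at most $2j$ (a direct consequence of $b$ being affine and $a$ quadratic), shows that $\sup_{t \le T} \mathbb{E}_x[X_t^{2j}] < \infty$ for every $j$. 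These bounds control both the polynomial values $M_{T \wedge \tau_n}$ and the quadratic variation of the stochastic integral, yielding the required uniform integrability and hence the true martingale property. Taking expectations then gives $\mathbb{E}_x[M_T] = M_0$, which is exactly the claimed identity.
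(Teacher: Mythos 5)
Your proof is correct, and it is a faithful rigorous implementation of the duality method \ref{itemii} that the paper only sketches before stating this theorem (the paper itself defers the proof to the cited references \cite{CKT:12, FL:16}). It is worth noting, though, that the route you take differs from the one the paper uses when it actually proves the general infinite-dimensional analogue, Theorem~\ref{thm:momentpoly}. You build the single time-reversed interpolating process $M_t = p(X_t, c(T-t))$, verify via It\^o and the ODE \eqref{eq:linearODE1d} that its drift vanishes, and then upgrade it to a true martingale by uniform moment bounds; the identity $\mathbb{E}_x[M_T]=M_0$ then gives the claim in one stroke. The paper instead works with the two-parameter function $f(s,t)=\mathbb{E}_x[p(X_t,c(s))]$ and invokes the Ethier--Kurtz-type Lemma~\ref{lemA}, which only requires that $t\mapsto p(X_t,c(s))$ be a martingale for each \emph{fixed} $s$ and that $s\mapsto \langle c(s),\cdot\rangle$ solve the dual ODE, together with an integrability condition on $\mathbb{E}[\langle L_k c(s), X_t\rangle]$; one then concludes $f(T,0)=f(0,T)$ from absolute continuity in each variable separately. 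Your approach is more elementary and self-contained in this one-dimensional diffusion setting, but it genuinely needs the time-reversed process to be a martingale in $t$ jointly with the moving coefficient, which is exactly what your Gronwall-based bounds $\sup_{t\le T}\mathbb{E}_x[X_t^{2j}]<\infty$ (available here because $b$ is affine and $a$ is quadratic, so $\mathcal{A}(x^{2j})$ has degree at most $2j$, with the induction over $j$ handling the lower-order moments) supply; the paper's two-parameter argument is the one that survives in the tensor-algebra setting where such moment bounds must instead be assumed, as in \eqref{eq:intcond2}.
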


\begin{remark}\label{rem:bidual}
In this case \eqref{eq:KBdual} yields
$\mathbb{E}_x[\sum_{i=0}^k u_iX_T^i]=v(t,u),$ where $v(t, u)$
satisfies
$$
\partial_t v(t, u)=\Bcal v(t, u)= (L_ku)^\top \nabla_{u} v(t,u).$$
Inserting $\mathbb{E}_x[\sum_{i=0}^k u_iX_t^i]$ for $v(t,u)$
thus gives
$
\partial_t \mathbb{E}[\sum_{i=0}^k u_iX_t^i]=\langle L_k(u),\mathbb{E}[(1,X_t,\ldots,X_t^k)^\top]  \rangle
$
and in turn
\[
\partial_t \mathbb{E}[(1,X_t,\ldots,X_t^k)^\top]= L_k^{\top}\mathbb{E}[(1,X_t,\ldots,X_t^k)^\top].
\]
In the terminology of \cite{CS:21}, this corresponds to the so-called bidual moment formula.

\end{remark}

\subsection{Signature of continuous semimartingales}\label{sec22}

Our approach of passing from the narrow definition of one-dimensional affine and polynomial processes to a universal class is based on the signature process,  which  was first studied by \cite{C:57, C:77} and plays a prominent role in  \emph{rough path theory} introduced in \cite{L:98} (see also  the monographs by \cite{FV:10, FH:14}).

Following the lines of Section~2.1 in \cite{CGS:22} we start here by introducing some basic notions related to the signature  of an $\mathbb{R}^{d}$-valued continuous semimartingale. Similar introductions to the concept of signature can be found, e.g., in Section 2.2 of \cite{BHRS:21}.

For each $n \in \mathbb{N}_0=\{0,1,2, \ldots\}$ consider the $n$-fold tensor product of $\mathbb{R}^{d}$ given by
\begin{equation*}
(\mathbb{R}^{d})^{\otimes 0}:=\mathbb{R}, \qquad (\mathbb{R}^{d})^{\otimes n}:=\underbrace{\mathbb{R}^{d}\otimes\cdots\otimes\mathbb{R}^{d}}_{n}.
\end{equation*}
	For $d\in \N$, we define the extended tensor algebra on $\mathbb{R}^{d}$ as 
	\begin{equation*}
		T((\mathbb{R}^{d})):=\{\textbf{a}:=(a_{0},\dots,a_{n},\dots) : a_{n}\in(\mathbb{R}^{d})^{\otimes n}\}.
	\end{equation*}
	Similarly we introduce the truncated tensor algebra of order $N \in \mathbb{N}$ 
		\begin{equation*}
        T^{(N)}(\mathbb{R}^{d}):=\{\textbf{a}:=(a_{0},\dots,a_{N},0,\dots) : a_{n}\in(\mathbb{R}^{d})^{\otimes n}\}.
	\end{equation*}
	and the tensor algebra
$
		T(\mathbb{R}^{d}):=\bigcup_{N\in \N}T^{(N)}(\mathbb{R}^{d}).
$	For  $\textbf{a},\textbf{b}\in T((\mathbb{R}^{d}))$ and  $\lambda\in\R$ we set 
	$
	\pi_n(\textbf a):=a_n,$ $
		\tru {\mathbf a} N:=(a_0,\ldots,a_N),$
		$\textbf{a}+\textbf{b}:=(a_{0}+b_{0},\dots,a_{n}+b_{n},\dots),$ $
		\lambda \textbf{a}:= (\lambda a_{0},\dots, \lambda a_{n},\dots),$ $
		\textbf{a}\otimes \textbf{b}:=(c_{0},\dots, c_{n},\dots),
	$
where $c_{n}:=\sum_{k=0}^{n}a_{k}\otimes b_{n-k}$. Whenever needed, we identify $(a_0,\ldots,a_N)\in T(\R^d)$ with $(a_0,\ldots,a_N,0,\ldots,0)\in T((\R^d))$.
{Observe that 
$T((\mathbb{R}^{d}))$, equipped with addition, scalar multiplication, and the tensor product, forms an algebra over the real numbers.}
The neutral element is identified via $(1,0,\dots,0,\dots)$. The tensor algebra $T(\mathbb{R}^{d}) \subset T((\mathbb{R}^{d})) $ is a subalgebra and isomorphic to \emph{the} real free algebra with $d$ generators. Note that $T^{(N)}(\mathbb{R}^{d})$ is an algebra as well, namely \emph{the} real free $N$-nilpotent algebra. It appears as a factor algebra of $T((\mathbb{R}^{d}))$ by the ideal of elements with entries only after $N$. We shall need these algebraic structures to introduce signature properly.

For a multi-index  $I:=(i_1,\ldots,i_n) \in {\{1,\ldots,d\}}^n $  we denote its length by $|I|:=n$. We also consider the empty index $I:=\emptyset$ and set $|I|:=0$. For each multi-index $I$ with $|I|>0$ we define
\begin{equation}\label{eqn18}
	 I':=
	     \begin{cases}
	          (i_1,\ldots,i_{|I|-1}) &\text{if }|I|\geq2,\\
	          \emptyset&\text{if }|I|=1,
	     \end{cases}
\end{equation}
which just removes the last letter. Similarly we set $I'':=(I')'$ if $|I|>1$. Finally, we denote by $I^{ord}$ the multi-index obtained by sorting the entries of $I$ and define $I! :=\prod_{k=1}^d (I(k)! )
$
where $I(k)=\sum_{j=1}^{|I|}  1_{\{i_j=k\}}$ counts the number of times the letter $k$ appears in multiindex $I$.     
Next, for each $|I|\geq 1$ we set
	$$e_I:=e_{i_1}\otimes\cdots\otimes e_{i_n},$$
	where $e_1, \ldots, e_d$ denote the canonical basis vectors of $\mathbb{R}^d$.
	Denoting by $e_\emptyset$ the basis element corresponding to $(\R^d)^{\otimes 0}$, each element of $\textbf{a}\in T((\R^d))$ can thus be written as formal series
$$\textbf{a}=\sum_{|I|\geq 0}\a_Ie_I,$$
for some $\a_I\in \R$. Notice that convergence is not just formal here but componentwise (which is actually the standard locally convex structure on the sequence space) and whence properly defined -- still we maintain the wording of formal series. To extract the $I$-th component from $\textbf{a}$, we shall either write $\textbf{a}_I$ or $\langle \textbf{a}, e_I \rangle$. For each $\textbf{a}\in T(\R^d)$ and each $\textbf{b}\in T((\R^d))$ we set
$$\langle \textbf{a},\textbf{b}\rangle:=\sum_{|I|\geq 0} \a_{I}\b_{I}.$$ 
Observe that the set $\{e_I\colon |I|=n\}$ is an orthonormal basis of $(\mathbb{R}^{d})^{\otimes n}$ with respect to this $\langle\cdot,\cdot\rangle$. In particular, $\b_I=\langle e_I,\textbf{b}\rangle$.	\\

Finally, denote by $(\fdot)^{(1)}:T((\mathbb{R}^{d}))\to( T((\mathbb{R}^{d})))^{d}$ and $(\fdot)^{(2)}:T((\mathbb{R}^{d}))\to (T((\mathbb{R}^{d})))^{d\times d}$ the shifts given by
\begin{align*}
    {\bf u}^{(1)}&:=\Big(\sum_{|I|\geq1}\u_I e_{I'}\Big)
(1_{\{i_{|I|}=1\}},\ldots,1_{\{i_{|I|}=d\}})^\top,\\
{\bf u}^{(2)}&:=\Big(\sum_{|I|\geq2}\u_I e_{I''}\Big)
\begin{pmatrix}
1_{\{i_{|I|-1}=i_{|I|}=1\}} & \cdots & 1_{\{i_{|I|-1}=1,i_{|I|}=d\}}\\
\vdots & \cdots &\vdots \\
1_{\{i_{|I|-1}=d,i_{|I|}=1\}}&\cdots& 1_{\{i_{|I|-1}=i_{|I|}=d\}}
\end{pmatrix}.
\end{align*}
Moreover, we define
$$\langle \a,\u^{(1)}\rangle:
=\sum_{|I|\geq 1}\big( \a_{I}\u_{(I1)},\ldots, \a_{I}\u_{(Id)} \big)^\top
\quad\text{and}\quad\langle \a,\u^{(2)}\rangle:
=\sum_{|I|\geq 2}\begin{pmatrix}
\a_{I}\u_{(I11)} & \cdots & \a_{I}\u_{(I1d)}\\
\vdots & \cdots &\vdots \\
\a_{I}\u_{(Id1)}&\cdots& \a_{I}\u_{(Idd)}
\end{pmatrix}
$$
for each $\a\in T(\R^d)$. Here, we use the notation $Ik$ for the multiindex $I$ prolonged by $k$.

Throughout the paper we consider a filtered probability space $(\Omega, \Fcal, (\Fcal_t)_{t\in[0,T]},\P)$ with a right continuous filtration. Unless otherwise specified, stochastic processes are always supposed to be defined thereon.
We are now ready to define the signature of an $\mathbb{R}^d$-valued continuous semimartingale.

\begin{definition}\label{def2}
	Let $(X_t)_{t\in[0,T]}$ be a continuous $\mathbb{R}^{d}$-valued semimartingale. Its \emph{signature} is the $T((\R^d))$-valued process  $t\mapsto \X_{t}$ whose components are recursively defined as
\begin{equation*}
	\langle e_{\emptyset},\mathbb{X}_{t}\rangle:=\textup{1}, \qquad \langle e_{I}, \mathbb{X}_{t}\rangle:=\int_{0}^{t}\langle e_{I'},\mathbb{X}_{r}\rangle\circ \mathrm{d} X_{r}^{i_{|I|}},
\end{equation*}
for each $I=(i_1,\ldots, i_n)$ and $t\in[0,T]$, where $\circ$ denotes the Stratonovich integral. 
Its projection $\X^{\leq N}$ on $T^{(N)}(\mathbb{R}^{d})$ is given by
\begin{equation*}
	\mathbb{X}_{t}^{\leq N}=\sum_{|I|\leq N} \langle e_{I}, \mathbb{X}_{t}\rangle e_{I}
\end{equation*}
and is called \emph{signature of $X$ truncated at level $N$}. 
\end{definition}
Observe that the signature of $X$ and the signature of $X-c$ coincide for each $c\in \R^d$, and are given by iterated Stratonovich integrals
$$
\langle e_{I}, \mathbb{X}_{t}\rangle = \int_{0 \leq t_1\leq \ldots t_n \leq t} \circ dX^{i_1}_{t_1} \cdots \circ dX^{i_n}_{t_n} \quad .
$$

In order to describe the algebraic properties of the signature we need the shuffle product, which is nothing else than a concise expression of the classical integration by parts formula, which  holds for  Stratonovich integrals.

\begin{definition}\label{shuffle-product}
	For any two multi-indices $I:=(i_1,\ldots,i_n)$ and $J:=(j_1,\ldots,j_m)$ the \emph{shuffle product} is defined recursively as
	\begin{align*}
		e_{I}\shuffle e_{J}:= (e_{I'}\shuffle e_{J})\otimes e_{i_n}+(e_{I}\shuffle e_{J'})\otimes e_{j_m},
	\end{align*}
	with $e_{I}\shuffle e_{\emptyset}:= e_{\emptyset}\shuffle e_{I}= e_{I}$. It extends to $\textbf{a},\textbf{b}\in T(\R^d)$ as
	$$\textbf{a}\shuffle\textbf{b}=\sum_{|I|,|J|\geq0}\a_I\b_J (e_I\shuffle e_J)$$
	and to $\textbf{a},\textbf{b}\in T((\R^d))$ as 
	$\langle e_I,\textbf{a}\shuffle\textbf{b}\rangle=\langle e_I ,\mathbf a^{\leq |I|}\shuffle \mathbf b^{\leq |I|}\rangle.$
\end{definition}
\begin{remark}\label{rem1}
The shuffle product has many important properties:
\begin{enumerate}\label{shuffprop}
    \item  $(T(\R^d),+,\shuffle)$ is a commutative algebra, which in particular means that the shuffle product is associative and commutative.
    \item\label{it6ii} $e_I\shuffle e_J\in(\R^d)^{\otimes(|I|+|J|)}$, which in particular implies that $e_I\shuffle e_J=\sum_{|H|=|I|+|J|}\lambda_He_{H}$  for some  $\lambda_H\in \R$.
    \item\label{it6iii} For each multi-index $I$ it holds
$\sum_{J^{ord}=I^{ord}}e_J
=\frac 1 {I!}(
e_{i_1}\shuffle\ldots \shuffle e_{i_{|I|}})$.
\item \label{th:shuffle}
Let $(X_t)_{t\in[0,T]}$ be a continuous $\mathbb{R}^{d}$-valued semimartingale and $I,J$ be two multi-indices. Then
$
		\langle e_{I},\mathbb{X}\rangle \langle e_{J}, \mathbb{X}\rangle =\langle e_{I}\shuffle e_{J}, \mathbb{X}\rangle\ a.s.$ (see \cite{R:58} or \cite{LCL:07}) and 
$$
\langle \textbf{a}, \mathbb{X} \rangle \langle \textbf{b}, \mathbb{X} \rangle =  \langle \mathbf{a} \shuffle \mathbf{b}, \mathbb{X}\rangle
$$
for $\mathbf{a},\mathbf{b} \in T(\mathbb{R}^d)$.
        
\end{enumerate}
\end{remark}

In the following definition we introduce the notion of the exponential shuffle, similarly as in Definition~6.1 of \cite{BHRS:21}. Note that in contrast to that definition there we consider it also for elements in the (complexified) extended tensor algebra and not only on $T(\mathbb{R}^d)$. 
The complexification of the extended tensor algebra is denoted by $T((\mathbb{R}^d)) + \text{i} T((\mathbb{R}^d))$. Note that we still apply $\langle\cdot,\cdot\rangle$ as introduced above to both the real and imaginary entries.

\begin{definition} \label{def:expshuffle}
For each $\u\in T((\R^d))+\text{i}T((\R^d))$ we set $\overline \u:=\u-\u_\emptyset e_\emptyset$ and consider the following quantities.
\begin{itemize}
    \item $\u^{\shuffle 0}=e_\emptyset$, $\u^{\shuffle 2}:=\u\shuffle \u$ and $\u^{\shuffle k}:=\u^{\shuffle (k-1)}\shuffle \u$.
    \item $\exp(\shuffle \u):=\sum_{k=0}^\infty \frac{\u^{\shuffle k}}{k! }$. Since the right hand side can be written as $\exp(\u_\emptyset)\sum_{k=0}^\infty \frac{\overline \u^{\shuffle k}}{k! }$, we can see that every component of $\exp(\shuffle \u)$ corresponds to a finite sum. 
    \item 
    {$\log(\shuffle\u):=\sum_{k=1}^\infty\frac {(-1)^{k-1}{\overline\u^{\shuffle k}}}k$, whenever $\u_\emptyset=1$.}
\end{itemize}
\end{definition}
 
\begin{lemma}\label{lem6}
For $\u_0:=0$ it holds $\exp(\shuffle \u_0)=e_\emptyset$ and for each $\u,\v$ it holds
\begin{align*}
\exp(\shuffle \u)\shuffle \exp(\shuffle \v)&=\exp(\shuffle (\u+ \v)).
\end{align*}
Moreover, for each $\u$ such that $\u_\emptyset=1$ it holds
$\exp(\shuffle\log(\shuffle \u))=\u.$
\end{lemma}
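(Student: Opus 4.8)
The plan is to exploit that, by Remark~\ref{rem1}(i) (extended componentwise to $T((\R^d))+\text{i}T((\R^d))$), the shuffle product makes the (complexified) extended tensor algebra a commutative, associative, unital algebra with unit $e_\emptyset$, and that it is \emph{graded} in the sense of Remark~\ref{rem1}(ii): the shuffle of components of degrees $p$ and $q$ lands in degree $p+q$. Consequently $\exp(\shuffle\,\cdot)$ and $\log(\shuffle\,\cdot)$ behave exactly like the classical scalar power series $\exp$ and $\log(1+\cdot)$, and each asserted identity is the corresponding algebraic identity of formal power series; the only genuine work is to justify the rearrangement of infinite shuffle series, for which the grading is the decisive tool. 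As a preliminary I would record the binomial theorem $(\u+\v)^{\shuffle n}=\sum_{j=0}^n\binom{n}{j}\,\u^{\shuffle j}\shuffle\v^{\shuffle(n-j)}$, proved by induction on $n$ using only commutativity and associativity.

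The first identity is immediate, since $\u_0=0$ forces $\u_0^{\shuffle k}=0$ for $k\ge1$, leaving only the $k=0$ term $e_\emptyset$. For the exponential law I would first establish the factorisation $\exp(\shuffle\u)=\exp(\u_\emptyset)\,\exp(\shuffle\,\overline\u)$: applying the binomial theorem to $\u=\u_\emptyset e_\emptyset+\overline\u$ and using $(\u_\emptyset e_\emptyset)^{\shuffle m}=\u_\emptyset^m e_\emptyset$ (as $e_\emptyset$ is the shuffle unit) produces, after reindexing, the product of the scalar series $\sum_m \u_\emptyset^m/m!$ with $\sum_i \overline\u^{\shuffle i}/i!$. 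Because $\overline\u$ has lowest degree $\ge1$, the grading gives $\pi_n(\overline\u^{\shuffle i})=0$ for $i>n$, so $\exp(\shuffle\,\overline\u)$ is a finite sum in each component and the reindexing is legitimate; this factorisation simultaneously shows that every component of $\exp(\shuffle\u)$ is an absolutely convergent scalar series. Writing $\u+\v=(\u_\emptyset+\v_\emptyset)e_\emptyset+(\overline\u+\overline\v)$, the exponential law then reduces, via the scalar identity $\exp(\u_\emptyset)\exp(\v_\emptyset)=\exp(\u_\emptyset+\v_\emptyset)$, to the claim $\exp(\shuffle\,\overline\u)\shuffle\exp(\shuffle\,\overline\v)=\exp(\shuffle(\overline\u+\overline\v))$; here both factors are componentwise finite, so distributing the shuffle, grouping by $n=j+k$, and invoking the binomial theorem yields the result with no convergence subtleties.

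For the last identity I would pass to the truncated algebra $T^{(N)}(\R^d)$, which (as recalled when it was introduced) is a commutative factor algebra of $T((\R^d))$ obtained by quotienting out the components of degree $>N$, the projection $\mathbf a\mapsto\tru{\mathbf a}{N}$ being an algebra homomorphism that commutes with the componentwise-convergent shuffle series. Since $\u_\emptyset=1$, the element $\overline\u$ has lowest degree $\ge1$ and hence has nilpotent image in $T^{(N)}(\R^d)$, where moreover $\log(\shuffle\u)$ is a finite sum with vanishing empty component. On nilpotent elements of a commutative $\Q$-algebra the maps $\exp(\shuffle\,\cdot)$ and $\log(\shuffle\,\cdot)$ are honest, mutually inverse bijections realised by finite sums --- this being the classical formal power series identity $\exp(\log(1+x))=1+x$ with $x=\overline\u$ --- so $\exp(\shuffle\log(\shuffle\u))=\u$ holds in every $T^{(N)}(\R^d)$. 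As two elements of $T((\R^d))$ coincide iff all their truncations do, letting $N\to\infty$ gives the identity in the full algebra.

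The main obstacle is bookkeeping of infinite sums rather than any deep difficulty: once the empty component is nonzero, each component of $\exp(\shuffle\u)$ is a genuinely infinite (though absolutely convergent) scalar series, so the grading alone does not render the manipulations finite. The resolution is the scalar--nilpotent splitting $\u=\u_\emptyset e_\emptyset+\overline\u$, which isolates the one truly infinite direction (handled by the ordinary scalar exponential) from the graded-nilpotent direction (handled by finite shuffle sums); I would take care to verify that the Cauchy-product rearrangement underlying the binomial step is justified componentwise before combining the two.
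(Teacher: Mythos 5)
The paper states Lemma~\ref{lem6} without proof, so there is nothing to compare against; your argument is correct and supplies exactly the standard justification: the grading property of Remark~\ref{rem1}\ref{it6ii} makes $\overline\u$ ``nilpotent in each degree,'' the splitting $\u=\u_\emptyset e_\emptyset+\overline\u$ isolates the only genuinely infinite scalar series, and the remaining identities are the formal-power-series identities $e^xe^y=e^{x+y}$ and $\exp(\log(1+x))=1+x$ in a commutative graded $\Q$-algebra, with the truncation homomorphisms $T((\R^d))\to T^{(N)}(\R^d)$ legitimising the last one. One remark: your proof of the final identity tacitly uses the coefficient $\tfrac{(-1)^{k-1}}{k}$ in $\log(\shuffle\u)$, whereas Definition~\ref{def:expshuffle} literally reads $(-1)^{k-1}(k-1)!\,\overline\u^{\shuffle k}$; with that literal coefficient the identity $\exp(\shuffle\log(\shuffle\u))=\u$ already fails at degree two, so the definition must be read as $\sum_{k\ge1}\tfrac{(-1)^{k-1}}{k}\overline\u^{\shuffle k}$ (as the paper itself does later in the proof of Theorem~\ref{eq:combining}). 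With that reading your proof is complete.
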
 
One of the most important properties of the signature states that every continuous functional on a compact set of continuous semimartingales' trajectories can be approximated by a linear map of the time extended trajectory's signature, evaluated at its final time. This universal approximation result is based on the properties of the Lyons lift (see for instance Theorem~2.2.1 in \cite{L:98} and Theorem 9.5 and Exercise 17.2 in \cite{FV:10}) and on the Stone-Weierstrass theorem. It uses the fact that two trajectories cannot share the same value of the time-extended signature at final time and that linear maps on the signature form an algebra. As this result is not used directly in this paper and stating it would require to specify the involved spaces and topologies we direct the reader to the following references \cite{KLP:20,BHRS:21, CGS:22,CM:22, CPS:22}.

\section{Affine and polynomial processes on $T((\mathbb{R}^d))$}\label{subsec:affine} 

We introduce here the notion of affine or polynomial processes (which appear often \emph{both} just as linear processes) taking values in a subset of the extended tensor algebra $T((\mathbb{R}^d))$. 

\subsection{State space and spaces of dual elements}\label{sec:dualelements}

The goal of this section is to define a vector space of dual elements needed for the definition of affine and polynomial processes. We shall first introduce vector spaces of linear functionals $ \mathcal{S} \ni \mathbf{x} \mapsto \langle \u, \x \rangle $ restricted to some state space $\mathcal{S}$, which can be related in certain cases to power series. Sometimes these vector spaces already form a point-separating algebra, sometimes we have to form one, for instance by considering $ \mathcal{S} \ni \mathbf{x} \mapsto \exp \big ( \langle \u, \x \rangle \big) $: in both cases this will provide us with large enough sets of test functions for the martingale problem approach, which we use to define stochastic processes.

\begin{itemize}
\item State space of the stochastic process: $\mathcal{S} \subseteq T((\mathbb{R}^d))$. The set $\Scal$ is  endowed with the sigma-algebra induced by the product sigma-algebra on  $T((\mathbb{R}^d))$.
\item $\Pi:=\{\Pi_1,\Pi_2,\Pi_3,\ldots\}$ denotes a partition of the set $\{I\colon|I|\geq 0\}$ of all multi-indices. We assume that each $\Pi_k$ just contains indices of the same length.
\item For each $\x\in \Scal$ and $\u\in T((\mathbb{R}^d)) + \text{i} T((\mathbb{R}^d))$ we set
\begin{equation}\label{eqn5}
|\u|_\x:=\sum_{k=1}^\infty\big|\sum_{I\in \Pi_k}\co \u I \co \x I\big|.
\end{equation}

\item $\mathcal{S}^*:=\{ \mathbf{u} \in T((\mathbb{R}^d)) + \text{i} T((\mathbb{R}^d))\colon |\u|_\x<\infty \text{ for all } \mathbf{x} \in \mathcal{S} \}$.
\item For ${\bf u}\in \Scal^*$ and $\x\in \Scal$ we set 
\begin{align}\label{eq:uxana}
\langle {\bf u},{\bf x}\rangle:=\lim_{N\to\infty}\sum_{n=0}^N \langle \pr\u n,\pr\x n\rangle.
\end{align}
\end{itemize}

\begin{remark}\label{rem4}
 The choice of avoiding partitions with sets containing indices of  different length guarantees that 
the limit defining $
\langle \u,\x\rangle
$ is well defined for all $\u \in \Scal^*$  and all $\x \in \Scal$. With a different order of summation, $\Scal^*$ could potentially be larger but the limit might no longer be well-defined.

Observe that  $\Scal^*$ is a point-separating vector space. Point separation is already given by dual elements, where only finitely many components are non zero. Given $\u,\v\in \Scal^*$ and $\lambda,\mu\in \R$ it holds $\lambda\u+\mu\v\in \Scal^*$ and  $\lambda\langle \u,\x\rangle+\mu\langle\v,\x\rangle=\langle\lambda\u+\mu\v,\x\rangle$ for each $\x\in \Scal$.

  Finally, note that there is only one choice of partition for $d=1$.    
\end{remark}

\begin{example}\label{ex2}
 We discuss now some  choices for $\Pi$.
\begin{enumerate}
    \item \label{it3i} $\Pi:=\{\{I\}\colon |I|\geq 0\}$. In this case
$ |\u|_\x=\sum_{|I|\geq0}|\co \u I\co \x I|.$
If every entry of $\x$ is nonzero $ |\u |_{\x }$ is a norm. As we shall see, this partition is however not shuffle-compatible in the sense of Definition~\ref{def:shufflecomp}.
\item\label{it3ii} $\Pi:=\{\{I\colon I^{ord}=J\}\colon J=J^{ord}\}$. In this case
$$
|\u|_\x=\sum_{I\colon I=I^{ord}}\Big|\sum_{J\colon J^{ord}=I}\co \u J\co \x J\Big|,$$ where $I^{ord}$ is the ordered version of $I$. 
The advantage of this semi-norm lies in the connection with power series explained in Lemma~\ref{lem1}~\ref{it11iii} below.
In particular, this partition is the finest shuffle-compatible partition (see Definition~\ref{def:shufflecomp}).

 \item\label{it3iii} $\Pi:=\{\{I\colon |I|=n\}\colon n\geq 0\}$. In this case
$$
|\u|_\x=\sum_{n=0}^\infty|\langle \pr\u n,\pr\x n\rangle|.$$ This is the smallest semi-norm satisfying the necessary properties, thus the one corresponding to the largest $\Scal^*$. This partition is also shuffle-compatible (see Definition~\ref{def:shufflecomp}).
\end{enumerate}
\end{example}

\subsection{Martingale problems}
Let $M(\Scal) $ be the set of  maps $f:\Scal\to \C$  and fix a linear subset $D(\Lcal)\subseteq M(\Scal)$. Consider a linear operator $\mathcal{L}:D(\mathcal L)\to M(\Scal)$ and  a filtered probability space $(\Omega, \mathcal{F},(\mathcal{F}_t)_{t\in[0,T]}, \mathbb{P})$ with right continuous filtration.
An $\mathcal{S}$-valued stochastic process $(\mathbb{X}_t)_{t\in[0,T]}$ is called a \emph{solution to the martingale problem for $\mathcal{L}$}  if 
\begin{enumerate}
\item\label{iti} $\mathbb{X}_0= \mathbf{x}$ for some initial value $\mathbf{x}\in \mathcal{S}$, 
\item\label{itii}
there exists a version of $(\mathbb{X}_t)_{t\in[0,T]}$ such that $(f( \mathbb{X}_t))_{t\in[0,T]}$ and $(\Lcal f( \mathbb{X}_t))_{t\in[0,T]}$ 
are adapted  c\`adl\`ag processes for every $f\in D(\Lcal)$
and
\item\label{itiii} the process $N^f$ given by
\begin{equation}\label{eqnNnew}
N^f_t := f(\mathbb{X}_t)- f(\mathbb{X}_0) - \int_0^t \mathcal{L}f(\mathbb{X}_s) ds
\end{equation}
defines a local martingale for every $f\in D(\Lcal)$. 
\end{enumerate}

Note that we write $(\mathbb{X}_t)_{t\in[0,T]}$  for a general $\mathcal{S}$-valued  process which in the present context does not necessarily denote the signature process of some underlying process.

\subsection{The affine case}

We now use the martingale problem formulation to introduce affine processes on $\mathcal{S}$. {Observe that for each $\mathbf{x} \in \mathcal{S}$, we have $\langle e_\emptyset, \mathbf{x}\rangle = 1$.
Consequently, constant maps on $\mathcal{S}$ are linear and, without loss of generality, we can focus on linear processes in analogy with Theorem~\ref{th:affineRic1d}.}

\begin{definition}\label{def:primalaffine}
Fix $D(\Lcal):=\Span\{\mathbf{x} \mapsto \exp(\langle \mathbf{u}, \mathbf{x} \rangle) \colon  \mathbf{u} \in \mathcal{U}\}$ for some subset $\mathcal{U} \subseteq \Scal^*$  and let $\mathbf{\mathcal{L}}: D(\Lcal)\to M(\Scal)$ be a linear operator. 
We say that $\Lcal$ is
of $\Ucal$-\emph{affine type} if there exists 
a map $R: \mathcal{U} \to \mathcal{S}^*, \mathbf{u} \mapsto R(\mathbf{u})$
such that 
\begin{align}\label{eq:primalaffine}
\mathcal{L} \exp(\langle \mathbf{u}, \fdot \rangle)(\mathbf{x})= \exp(\langle \mathbf{u}, \mathbf{x} \rangle) \langle R(\mathbf{u}), \mathbf{x} \rangle
\end{align}
for each ${\bf u}\in \Ucal$ and ${\bf x} \in \Scal$.
Any solution $(\mathbb{X}_t)_{t\in[0,T]}$ of a martingale problem corresponding to an operator $\Lcal$ of $\Ucal$-affine type  is then called  $\mathcal{S}$-valued  $\Ucal$-\emph{affine process.}
\end{definition}

\begin{remark}
Observe that a solution to the
martingale problem is supposed to take values in $\Scal$ and that there is no cemetery state which would allow for a killing. This assumption has some useful consequences.
Suppose for a moment that $0\in \Ucal$ and thus that the map $f\equiv 1$ belongs to $\Dcal(\Lcal)$. Letting $(\X_t)_{t\in[0,T]}$ be the corresponding affine process with initial condition $\bf x$,   the process $(\int_0^t \Lcal f(\X_s)ds)_{t\in [0,T]}$
is a local martingale of finite variation and thus a constant process. 
Since $(\Lcal f(\X_t))_{t\in[0,T]}$ is assumed to be c\`adl\`ag, we can conclude that 
$\Lcal f({\bf x})=0$  and hence that $\langle R(0),{\bf x }  \rangle=0$ for all ${\bf x}\in \Scal$. 
This motivates the assumption $R(0)=0$, whenever this is needed.
\end{remark}

With these definitions at hand and certain integrability conditions we can now prove a first version of the affine transform formula in the present setting.

\begin{theorem}\label{thm:affinetrans2}
Fix $T >0 $ and let $(\mathbb{X}_t)_{t\in[0,T]}$  be an $\mathcal{S}$-valued  $\Ucal$-affine process with initial value $\mathbf{x}_0$ and  operator $\mathcal{L}$ of $\Ucal$-affine type as of Definition~\ref{def:primalaffine}.
Fix $\u\in \Ucal$ and suppose that  the Riccati ODE
\begin{equation}\label{eqn2}
{\bm \psi}(t) = \u+\int_0^t R({\bm \psi}(s)) ds, 
\end{equation}
admits an  $\mathcal{U}$-valued  solution $({\bm \psi}(t))_{t\in [0,T]}$ such that
$\int_0^T|R(\bm\psi(s))|_\x ds<\infty$ 
for all $\mathbf{x} \in \mathcal{S}$.
Moreover, assume that
\begin{equation} \label{eq:intcond1}
\begin{aligned}
&\mathbb{E}[\sup_{s,t \leq T} |\exp(\langle {\bm \psi}(s), \mathbb{X}_t \rangle)|] < \infty,\quad\text{ and} \\
 &\mathbb{E}[\sup_{s, t \leq T} |\langle R({\bm \psi}(s)), \mathbb{X}_t\rangle \exp(\langle {\bm \psi}(s), \mathbb{X}_t\rangle)|] < \infty.
\end{aligned}
\end{equation}
Then,  it holds
$
\mathbb{E}[\exp(\langle \mathbf{u}, \mathbb{X}_T \rangle) ]= \exp(\langle {\bm \psi}(T) , \mathbf{x}_0 \rangle).
$
\end{theorem}

\begin{proof}
We verify the conditions of Lemma~\ref{lemA} for 
$$X(s,t)=\exp(\langle{\bm \psi}(s),\X_t\rangle)\quad\text{and}\quad Y(s,t)= \langle R({\bm \psi}(s)), \mathbb{X}_t \rangle \exp( \langle{\bm \psi}(s), \mathbb{X}_t\rangle).$$
As $(\X_t)_{t\in[0,T]}$ is an $\Ucal$-affine process, we have by definition that
\begin{equation}\label{eqn11new}
M^{\mathbf{u}}_t =\exp( \langle \mathbf{u}, \mathbb{X}_t\rangle)-\exp( \langle \mathbf{u}, {\bf x}_0\rangle) - \int_0^t \langle R(\mathbf{u}), \mathbb{X}_r \rangle \exp( \langle \mathbf{u}, \mathbb{X}_r\rangle) dr,
\end{equation}
is a local martingale. 
Using that
\begin{align*}
\mathbb{E}[\sup_{t \leq T} |M^{\mathbf{u}}_t |] &\leq 2 \mathbb{E}[\sup_{t \leq T} |\exp(\langle \mathbf{u}, \mathbb{X}_t\rangle)|] 
 +T \mathbb{E}[\sup_{t \leq T} |\langle R(\mathbf{u}), \mathbb{X}_t\rangle \exp(\langle \mathbf{u}, \mathbb{X}_t\rangle)|]\\
& \leq 2 \mathbb{E}[\sup_{s,t \leq T} |\exp(\langle {\bm \psi}(s), \mathbb{X}_t \rangle)|] 
 +T \mathbb{E}[\sup_{s, t \leq T} |\langle R({\bm \psi}(s)), \mathbb{X}_t\rangle \exp(\langle {\bm \psi}(s), \mathbb{X}_t\rangle)|] < \infty
\end{align*}
holds by \eqref{eq:intcond1}, we can conclude that $(M^{\mathbf{u}}_t)_{t\in[0,T]}$ is a c\`adl\`ag  martingale.
Furthermore, since $(\bm\psi(s))_{s\in[0,T]}$ solves the Riccati ODE \eqref{eqn2} componentwise we have that
\[
\langle {\bm \psi}(s), \mathbf{x}^{\leq N} \rangle  = \langle \mathbf{u}, \mathbf{x}^{\leq N} \rangle  + \int_0^s \langle R({\bm \psi}(r)), \mathbf{x}^{\leq N} \rangle  dr,
\]
for each $N$. {The dominated convergence theorem with the bound $|R(\bm\psi(r))|_\x$ yields then
\[
\langle {\bm \psi}(s), \mathbf{x} \rangle  = \langle \mathbf{u}, \mathbf{x} \rangle  + \int_0^s \langle R({\bm \psi}(r)), \mathbf{x} \rangle  dr.
\]
Since $\int_0^s |\langle R({\bm \psi}(r)), \mathbf{x} \rangle|  dr\leq \int_0^s |R(\bm\psi(r))|_\x dr<\infty$, we can apply the It\^o formula to the deterministic process $(\langle {\bm \psi}(s), \mathbf{x} \rangle)_{s\in [0,T]}$ and obtain that}
\[
\exp(\langle {\bm \psi}(s), \mathbf{x} \rangle ) - \exp(\langle \mathbf{u}, \mathbf{x} \rangle ) - \int_0^s \langle R({\bm \psi}(r)), \mathbf{x} \rangle \exp( \langle {\bm \psi}(r), \mathbf{x}\rangle) dr=0,
\]
which is thus a constant martingale for each ${\bf x} \in \Scal$.
Due to \eqref{eq:intcond1} all the assumptions of Lemma~\ref{lemA} are  satisfied and the assertion follows.
\end{proof}

\begin{remark}\label{rem2new}
An inspection of the proof and Lemma~\ref{lemA} shows that \eqref{eq:intcond1} can be replaced by the following two properties.
\begin{enumerate}
\item The process $(M_t^{{\bf u}})_{t\in[0,T]}$ given by \eqref{eqn11new}
is a true martingale for each ${\bf u}\in \Ucal$. 
\item $\int_0^T\int_0^T \E[|\langle R({\bm \psi}(s)), \X_t \rangle \exp( \langle {\bm \psi}(s),\X_t\rangle)|]ds dt<\infty$.
\end{enumerate}
Similarly, conditions \eqref{eqn2} and $\int_0^T|R(\bm\psi(s))|_\x ds<\infty$ can be replaced by 
$$\langle{\bm \psi}(t),\x\rangle = \langle\u,\x\rangle+\int_0^t \langle R({\bm \psi}(s)),\x\rangle ds,\qquad \x\in \Scal.$$
\end{remark}

 Next, we show that $t\mapsto\mathbb{E}[\exp(\langle \mathbf{u}, \mathbb{X}_t \rangle) ]$ is in fact a solution of a transport equation along $s\mapsto {\bm \psi}(s)$. Mathematically, this means that setting $v(t,\u):=\mathbb{E}[\exp(\langle \mathbf{u}, \mathbb{X}_t \rangle) ]$ we get
   $$\partial_t v(t,{\bf u})
 = \partial_s v(t,{\bm \psi}(s))|_{s=0}.$$

\begin{corollary}\label{cor1}
Fix $\u\in \Ucal$ and suppose that the conditions of Theorem~\ref{thm:affinetrans2} hold for some $\mathcal{U}$-valued solution $({\bm \psi}(t))_{t\in [0,T]}$ of \eqref{eqn2} with ${\bm \psi}(0)={\bf u}$.
Then 
$
\mathbb{E}[\exp(\langle \mathbf{u}, \mathbb{X}_T \rangle) ]= v(T,\mathbf{u}),
$
for some map $v:[0,T]\times \Ucal\to \R$ such that the following holds.
\begin{enumerate}
\item\label{it5i}
$v(t,{\bm \psi}(s))$ is absolutely continuous in $t$ for each fixed $s\in[0,T]$ and in $s$ for each fixed $t\in[0,T]$. Moreover, 
$$\int_0^T\int_0^T |\partial_tv(t,{\bm \psi}(s))|dtds<\infty,\quad\text{and }\quad \int_0^T\int_0^T |\partial_sv(t,{\bm \psi}(s))|dtds<\infty.$$
\item\label{it5ii} $\partial_t v(t,{\bm \psi}(s))
 = \partial_s v(t,{\bm \psi}(s))$ for each $s, t \in [0,T]$ and  $v(0,{\bf u}) =\exp (\langle \mathbf{u},\mathbf{x}_0\rangle )$.
\end{enumerate}

\end{corollary}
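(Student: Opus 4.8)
The plan is to take the obvious candidate $v(t,\u):=\E[\exp(\langle\u,\X_t\rangle)]$, so that $\E[\exp(\langle\u,\X_T\rangle)]=v(T,\u)$ holds by definition and the whole content of the corollary is the regularity of the two-parameter map $(s,t)\mapsto v(t,\bm\psi(s))=\E[\exp(\langle\bm\psi(s),\X_t\rangle)]$ along the Riccati trajectory. I would compute its $t$- and $s$-derivatives separately and show they agree. Throughout I use that $\bm\psi$ is $\Ucal$-valued, so that $\exp(\langle\bm\psi(s),\fdot\rangle)\in D(\Lcal)$ and the integrability bounds of Theorem~\ref{thm:affinetrans2} apply with $\u$ replaced by the fixed element $\bm\psi(s)$.

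For the $t$-derivative, fix $s$. Exactly as in the proof of Theorem~\ref{thm:affinetrans2}, the uniform bound \eqref{eq:intcond1} upgrades the local martingale $M^{\bm\psi(s)}$ of \eqref{eqn11new} to a true martingale, so taking expectations in \eqref{eqn11new} and using $\X_0=\x_0$ gives
\[
v(t,\bm\psi(s))=\exp(\langle\bm\psi(s),\x_0\rangle)+\int_0^t\E\big[\langle R(\bm\psi(s)),\X_r\rangle\exp(\langle\bm\psi(s),\X_r\rangle)\big]\,dr.
\]
Hence $t\mapsto v(t,\bm\psi(s))$ is absolutely continuous with $\partial_t v(t,\bm\psi(s))=\E[\langle R(\bm\psi(s)),\X_t\rangle\exp(\langle\bm\psi(s),\X_t\rangle)]$.

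For the $s$-derivative, the key input is that $\bm\psi$ solves the Riccati ODE \eqref{eqn2} componentwise, which (as in the proof of Theorem~\ref{thm:affinetrans2}, passing from truncations $\x^{\leq N}$ to the limit by dominated convergence with the summable bound $\int_0^T|R(\bm\psi(s))|_\x\,ds<\infty$) makes $s\mapsto\langle\bm\psi(s),\x\rangle$ absolutely continuous with derivative $\langle R(\bm\psi(s)),\x\rangle$ for every $\x\in\Scal$; the chain rule then yields, $\P$-a.s. with $\x=\X_t$,
\[
\exp(\langle\bm\psi(s),\X_t\rangle)=\exp(\langle\u,\X_t\rangle)+\int_0^s\langle R(\bm\psi(r)),\X_t\rangle\exp(\langle\bm\psi(r),\X_t\rangle)\,dr.
\]
Taking $\E$ and swapping $\E$ with $\int_0^s$ by Fubini — justified because the uniform majorant in \eqref{eq:intcond1} dominates the integrand for all $r,t\le T$ — shows $s\mapsto v(t,\bm\psi(s))$ is absolutely continuous with $\partial_s v(t,\bm\psi(s))=\E[\langle R(\bm\psi(s)),\X_t\rangle\exp(\langle\bm\psi(s),\X_t\rangle)]$. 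Comparing the two expressions gives \ref{it5ii}, and $v(0,\u)=\E[\exp(\langle\u,\X_0\rangle)]=\exp(\langle\u,\x_0\rangle)$ follows from $\X_0=\x_0$. Both $|\partial_t v(t,\bm\psi(s))|$ and $|\partial_s v(t,\bm\psi(s))|$ are bounded by the single constant $\E[\sup_{s,t\le T}|\langle R(\bm\psi(s)),\X_t\rangle\exp(\langle\bm\psi(s),\X_t\rangle)|]<\infty$, so integrating over $[0,T]^2$ yields the finiteness in \ref{it5i}.

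The step I expect to require the most care is the double interchange of differentiation with the infinite pairing $\langle\cdot,\cdot\rangle$ (defined only as a limit in \eqref{eq:uxana}) and with the expectation: the identity $\partial_s\langle\bm\psi(s),\x\rangle=\langle R(\bm\psi(s)),\x\rangle$ must be obtained at the level of the truncations $\x^{\leq N}$, where it is merely the componentwise Riccati ODE, and then passed to the limit using the summability bound, while moving $\E$ inside the $ds$-integral must be grounded on the uniform integrable majorant of \eqref{eq:intcond1}. Everything else is routine bookkeeping.
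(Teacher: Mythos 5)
Your proposal is correct and follows essentially the same route as the paper's proof: the $t$-derivative comes from the true martingale property of $M^{{\bm\psi}(s)}$, the $s$-derivative from the componentwise Riccati ODE combined with the bound $\int_0^T|R({\bm\psi}(s))|_{\x}\,ds<\infty$ and Fubini under the majorant \eqref{eq:intcond1}, and both derivatives equal $\E[\langle R({\bm\psi}(s)),\X_t\rangle\exp(\langle{\bm\psi}(s),\X_t\rangle)]$. The only difference is that you spell out the dominated-convergence and Fubini steps that the paper leaves implicit by referring back to the proof of Theorem~\ref{thm:affinetrans2}.
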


\begin{proof}
Proceeding as in the proof of Theorem~\ref{thm:affinetrans2} we get that  the process $(M_t^{{\bf u}})_{t\in[0,T]}$ given by \eqref{eqn11new} is a true martingale satisfying $M_0^{{\bf u}}=0$. Together with \eqref{eq:intcond1} this in particular implies that 
$\E[M_t^{{\bm \psi}(s)}]=0$ and thus that $v(t,{\bm \psi}(s))$ is absolutely continuous in $t$ for each fixed $s\in[0,T]$. The corresponding derivative $\partial_t v(t,{\bm \psi}(s))$ is given by
\begin{equation}\label{eqn15}
\E[\langle R({\bm \psi}(s)), \mathbb{X}_t \rangle \exp( \langle {\bm \psi}(s), \mathbb{X}_t\rangle)].
\end{equation}
Similarly, \eqref{eqn2},   
$\int_0^T|R(\bm\psi(s))|_\x ds<\infty$, and  \eqref{eq:intcond1} yield that 
$v(t,{\bm \psi}(s))$ is absolutely continuous in $s$ for each fixed $t\in[0,T]$ and the corresponding derivative $\partial_s v(t,{\bm \psi}(s))$ is again given by \eqref{eqn15}. Hence, \ref{it5ii} follows now directly and \ref{it5i} follows by \eqref{eq:intcond1}.
\end{proof}

As in the one dimensional case we can now define the dual operator for affine processes.

\begin{remark}
Let $\mathcal{L}$ be an operator of $\Ucal$-affine type. The  \emph{dual $\Ucal$-affine operator} is the linear operator $\Bcal$ 
  acting on $D(\Bcal):=\Span\{g:\Ucal\to \R\colon g({\bf u})=  \exp(\langle \mathbf{u}, \mathbf{x} \rangle )$ for $\mathbf{x}\in \mathcal{\Scal}\}$ such that
\[
\mathcal{B}\exp(\langle \fdot,  \mathbf{x} \rangle)({\bf u)}:=\mathcal{L} \exp(\langle  \mathbf{u} , \fdot\rangle )({\bf x}), \quad \mathbf{u} \in \Ucal.
\]

Clearly
$\mathcal{B}\exp(\langle \fdot,  \mathbf{x} \rangle)({\bf u})=\exp(\langle \mathbf{u}, \mathbf{x} \rangle) \langle R(\mathbf{u}), \mathbf{x} \rangle$ for each ${\bf u} \in \Ucal$ and ${\bf x}\in \Scal$.
To show that
$\mathcal{B}$ can -- similarly as in the one dimensional situation -- be interpreted as the restriction of a transport operator,
suppose that for each ${\bf u }\in \Ucal$ and sufficiently small $\varepsilon >0$, there exists a solution $(\bm\psi(t))_{t\in[0,\e]}$
to the Riccati ODE \eqref{eqn2}, such that $\int_0^\e|R(\bm\psi(s))|_\x ds<\infty$.
Then the linear operator $\Bcal$ satisfies
$$\lim_{\e\to 0} \frac{|g({\bm \psi}(\e))-g({\bf u})-\e \Bcal g ({\bf u})|}{\e}
=\lim_{\e\to 0} \frac{|g({\bf u} + \int_0^\e R({\bm \psi}(s)) ds)-g({\bf u})-\e \Bcal g ({\bf u})|}{\e}=
0,$$
for  each $g\in \Dcal(\Bcal)$. 
This shows that $\Bcal g ({\bf u})$ can be seen as the directional derivative of $g$ at ${\bf u}$ in direction $R({\bf u})$.
\end{remark}

In order to get formulations that do not depend on the existence of the Riccati ODEs, we introduce the following property for $\mathcal{U}$ allowing for componentwise differentiability.

\begin{definition}\label{def1}
We say that the set $\Ucal$ is componentwise-semi-open  if for each ${\bf u}\in \Ucal$ and $|I|\geq0$ the set $\Ucal_{I,{\bf u}}:=\{y\in\R\colon {\bf u}+ye_I\in \Ucal\}$ is semi-open\footnote{A set $A$ is semi-open if there is an open set $O$ such that $O\subseteq A\subseteq \overline O$ where $\overline O$ denotes the closure of $O$.}. For a componentwise-semi-open set $\Ucal$ we say that a map $v:\Ucal\to\R$ is componentwise-differentiable at ${\bf u}$ if each function $v_{I,{\bf u}}:\Ucal_{I,{\bf u}}\to \R$ given by $v_{I,{\bf u}}(y):=v({\bf u}+ye_I)$ is differentiable at $y=0$ (if 0 lies in the interior of $\Ucal_{I,{\bf u}}$)   or if it is one-sided differentiable at $y=0$ (if 0 lies in the boundary of $\Ucal_{I,{\bf u}}$). In this case we write
$\nabla_{{\bf u}}  v({\bf u}):=\sum_{|I|\geq0}  v_{I,{\bf u}}'(0)e_I$.
\end{definition}

\begin{remark}\label{rem:dual}
For a componentwise-semi-open set $\mathcal{U}$, the dual $\mathcal{U}$-affine operator can therefore be written as
\[
\mathcal{B}g({\bf u}) =\langle R({\bf u}), \nabla_{{\bf u}} g({\bf u}) \rangle
\]
for $g \in \Dcal(\Bcal)$. As the right hand side makes sense for all componentwise-differentiable functions, $\Bcal$ can be extended to these functions.
\end{remark}

The following theorem provides now a second formulation of the affine transform formula, namely a classical transport version,  under slightly different assumptions. Note that in contrast to Corollary~\ref{cor1}, we here do not need to assume the existence of a solution to the Riccati ODE.

\begin{theorem}\label{thm:affinetrans}
Let $(\mathbb{X}_t)_{t\in[0,T]}$  be an $\mathcal{S}$-valued  $\Ucal$-affine process for some componentwise-semi-open set $\Ucal$. Let $\mathbf{x}_0$ be its initial value  and $\mathcal{L}$ the corresponding operator of $\Ucal$-affine type.
Set 
$$\rub(\mathbf{u}, \mathbf{x}):=\sup_{N\in \N}\Big|\sum_{|I|\leq N}\co \x I\langle e_I, R(\mathbf{u})\rangle \Big|,\qquad {\bf u}\in \Ucal, {\bf x}\in \Scal.$$
Fix $T >0$ and suppose that for each ${\bf u}\in \Ucal$ and $|I|\geq0$ it holds 
\begin{align}
\mathbb{E}[\sup_{ t\leq T}\rub(\mathbf{u}, \mathbb{X}_t) |\exp(\langle {\bf u},{\X_t}\rangle)|] &< \infty,\quad\text{and}\label{eqn2i}\\
\mathbb{E}[\sup_{ t\leq T}(1+|\langle e_I,\X_t\rangle|) |\exp(\langle {\bf u},{\X_t}\rangle)|]&<\infty.\label{eqn2ii}
\end{align}
Then  
$
\mathbb{E}[\exp(\langle \mathbf{u}, \mathbb{X}_T \rangle) ]= v(T,\mathbf{u}),
$ for all $\mathbf{u} \in \mathcal{U}$,
where $v(t,\mathbf{u})$ is a solution to the following transport equation 
\begin{align}\label{eq:transport}
\partial_t v(t,\mathbf{u})= \mathcal{B} v(t,\mathbf{u})
 = \langle R(\mathbf{u}), \nabla_{\bf u} v(t,\mathbf{u}) \rangle, \quad v(0,{\bf u}) =\exp (\langle \mathbf{u},\mathbf{x}_0\rangle ), \quad t \in [0,T].
\end{align}
\end{theorem}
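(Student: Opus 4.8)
The plan is to show directly that the map $v(t,\mathbf{u}) := \mathbb{E}[\exp(\langle\mathbf{u},\mathbb{X}_t\rangle)]$ is itself a solution of the transport equation \eqref{eq:transport}; this is the infinite-dimensional analogue of method \ref{itemiii} from the one-dimensional setting. The initial condition $v(0,\mathbf{u})=\exp(\langle\mathbf{u},\mathbf{x}_0\rangle)$ is immediate from $\mathbb{X}_0=\mathbf{x}_0$, so the whole content is the identity $\partial_t v(t,\mathbf{u})=\langle R(\mathbf{u}),\nabla_{\bf u}v(t,\mathbf{u})\rangle$, which I would obtain by computing the two sides separately and matching them; once $v$ is shown to solve \eqref{eq:transport}, evaluating at $t=T$ gives the claim trivially, since $v(T,\mathbf{u})=\mathbb{E}[\exp(\langle\mathbf{u},\mathbb{X}_T\rangle)]$ by construction.

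For the time derivative, I would first upgrade the defining local martingale $M^{\mathbf{u}}$ of \eqref{eqn11new} to a true martingale. The point is the elementary bound $|\langle R(\mathbf{u}),\mathbb{X}_t\rangle|\leq\rub(\mathbf{u},\mathbb{X}_t)$, valid because $\langle R(\mathbf{u}),\mathbb{X}_t\rangle$ is, by the length-ordered limit \eqref{eq:uxana}, the limit of precisely the partial sums whose supremum defines $\rub$. Then \eqref{eqn2i} controls the drift term and \eqref{eqn2ii} (with $I=\emptyset$) controls the exponential term, so that $\mathbb{E}[\sup_{t\leq T}|M_t^{\mathbf{u}}|]<\infty$ exactly as in the proof of Theorem~\ref{thm:affinetrans2}. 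Taking expectations in \eqref{eqn11new} shows $t\mapsto v(t,\mathbf{u})$ is absolutely continuous with $\partial_t v(t,\mathbf{u})=\mathbb{E}[\langle R(\mathbf{u}),\mathbb{X}_t\rangle\exp(\langle\mathbf{u},\mathbb{X}_t\rangle)]$.

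For the spatial side I would compute $\nabla_{\bf u}v$ componentwise: for fixed $I$, differentiating $y\mapsto v(t,\mathbf{u}+ye_I)=\mathbb{E}[\exp(\langle\mathbf{u},\mathbb{X}_t\rangle)\exp(y\langle e_I,\mathbb{X}_t\rangle)]$ at $y=0$ under the expectation yields $v_{I,\mathbf{u}}'(0)=\mathbb{E}[\langle e_I,\mathbb{X}_t\rangle\exp(\langle\mathbf{u},\mathbb{X}_t\rangle)]$, so that, reading the pairing as the length-ordered sum consistent with \eqref{eq:uxana}, $\langle R(\mathbf{u}),\nabla_{\bf u}v(t,\mathbf{u})\rangle=\sum_{|I|\geq0}\langle e_I,R(\mathbf{u})\rangle\,\mathbb{E}[\langle e_I,\mathbb{X}_t\rangle\exp(\langle\mathbf{u},\mathbb{X}_t\rangle)]$. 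Comparing with the formula for $\partial_t v$, the theorem reduces to interchanging expectation with this length-ordered infinite sum, i.e.\ to passing the expectation through the limit defining $\langle R(\mathbf{u}),\mathbb{X}_t\rangle$. This is a routine dominated-convergence step: the partial sums are bounded in modulus by $\rub(\mathbf{u},\mathbb{X}_t)$ and $\rub(\mathbf{u},\mathbb{X}_t)|\exp(\langle\mathbf{u},\mathbb{X}_t\rangle)|$ is integrable by \eqref{eqn2i}, so the orders of summation in the two expressions agree and the sides coincide.

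The main obstacle is the differentiation under the expectation defining $v_{I,\mathbf{u}}'(0)$, since $\Ucal$ is only componentwise-semi-open and the difference quotient is not a priori dominated uniformly in $y$ near $0$. The key is that \eqref{eqn2ii} is assumed at \emph{every} point of $\Ucal$: writing $z=\langle e_I,\mathbb{X}_t\rangle$ and using $|(e^{yz}-1)/y|\leq|z|(1+e^{y_0 z})$ for $0\leq y\leq y_0$, the difference quotient times $|\exp(\langle\mathbf{u},\mathbb{X}_t\rangle)|$ is dominated by the fixed integrable function $|z|\,(|\exp(\langle\mathbf{u},\mathbb{X}_t\rangle)|+|\exp(\langle\mathbf{u}+y_0 e_I,\mathbb{X}_t\rangle)|)$, whose two summands are controlled by \eqref{eqn2ii} at $\mathbf{u}$ and at the nearby point $\mathbf{u}+y_0 e_I\in\Ucal$. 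The same device furnishes the one-sided derivative when $0$ lies on the boundary of $\Ucal_{I,\mathbf{u}}$, which is exactly the situation permitted by the semi-openness in Definition~\ref{def1}. This establishes $\partial_t v=\langle R(\mathbf{u}),\nabla_{\bf u}v\rangle=\mathcal{B}v$ together with the initial condition, so that $v(t,\mathbf{u})=\mathbb{E}[\exp(\langle\mathbf{u},\mathbb{X}_t\rangle)]$ solves \eqref{eq:transport}, completing the argument.
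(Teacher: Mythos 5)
Your proposal is correct and follows essentially the same route as the paper: upgrade $M^{\mathbf{u}}$ to a true martingale via the bound $|\langle R(\mathbf{u}),\mathbb{X}_t\rangle|\leq \rub(\mathbf{u},\mathbb{X}_t)$ and \eqref{eqn2i}--\eqref{eqn2ii}, identify $\partial_t v(t,\mathbf{u})=\mathbb{E}[\langle R(\mathbf{u}),\mathbb{X}_t\rangle\exp(\langle\mathbf{u},\mathbb{X}_t\rangle)]$, differentiate under the expectation componentwise in $\mathbf{u}$ using \eqref{eqn2ii}, and match the two sides by dominated convergence with the majorant $\rub(\mathbf{u},\mathbb{X}_t)|\exp(\langle\mathbf{u},\mathbb{X}_t\rangle)|$. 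The only point where the paper does slightly more work is in proving that $t\mapsto\mathbb{E}[\langle R(\mathbf{u}),\mathbb{X}_t\rangle\exp(\langle\mathbf{u},\mathbb{X}_t\rangle)]$ is continuous (via dominated convergence and the c\`adl\`ag assumptions of the martingale problem), which upgrades your ``absolutely continuous with a.e.\ derivative'' to differentiability at every $t\in[0,T]$, as the statement of the transport equation requires; this is routine given your hypotheses but should be recorded.
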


\begin{proof}
Fix $\u\in\Ucal$. We know from the definition of an affine process that the process $M^{\mathbf{u}}$ given by
\begin{equation*}
dM^{\mathbf{u}}_t =d\exp( \langle \mathbf{u}, \mathbb{X}_t\rangle)- \mathcal{L}\exp( \langle \mathbf{u}, \mathbb{X}_t\rangle) dt, \qquad M^{\mathbf{u}}_0=0,
\end{equation*}
is a local martingale. Since conditions \eqref{eqn2i} and \eqref{eqn2ii} imply condition \eqref{eq:intcond1}, we can proceed as in the proof of Theorem~\ref{thm:affinetrans2} to conclude that 
$
\mathbb{E}[\sup_{t \leq T} |M^{\mathbf{u}}_t |] < \infty
$
and thus that $M^{\bf u}$ is a true martingale.
Therefore and due to Fubini-Tonelli's theorem, we have 
\begin{align}
\mathbb{E}\big[\exp(\langle \mathbf{u}, \mathbb{X}_t \rangle )\big]
&= \exp(\langle \mathbf{u}, \mathbf{x}_0 \rangle )+\int_0^t \mathbb{E}\Big[\langle R(\mathbf{u}), \mathbb{X}_s  \rangle \exp (\langle \mathbf{u}, \mathbb{X}_s \rangle  )\Big]  ds \notag\\
&= \exp(\langle \mathbf{u}, \mathbf{x}_0 \rangle )+\int_0^t \mathbb{E}\Big[\Big(
\lim_{N\to\infty}
\sum_{|I|\leq N}{\langle  R(\mathbf{u}),e_I\rangle}\langle e_I, \mathbb{X}_s  \rangle\Big)
\exp (\langle \mathbf{u}, \mathbb{X}_s \rangle  )\Big]  ds \notag\\
&=\exp(\langle \mathbf{u}, \mathbf{x}_0 \rangle )+\int_0^t \langle R(\mathbf{u}), \mathbb{E}\Big[\mathbb{X}_s \exp (\langle \mathbf{u}, \mathbb{X}_s \rangle  )\Big] \rangle ds \label{eq:Rint},
\end{align}
where the last equality follows from dominated convergence with \eqref{eqn2i}.
This justifies also the application of Fubini-Tonelli's theorem.
We now claim that the map 
\[
t \mapsto\langle R(\mathbf{u}), \mathbb{E}[\mathbb{X}_t \exp(\langle \mathbf{u}, \mathbb{X}_t \rangle)]\rangle 
= \mathbb{E}[\langle R(\mathbf{u}),\mathbb{X}_t \rangle \exp(\langle \mathbf{u}, \mathbb{X}_t \rangle)]
\]
is continuous. Let $t_n \to t$. Then, using \eqref{eqn2i} again we have by dominated convergence, 
\[
 \lim_{n \to \infty}\mathbb{E}[\langle R(\mathbf{u}),\mathbb{X}_{t_n} \rangle \exp(\langle \mathbf{u}, \mathbb{X}_{t_n} \rangle)]=\mathbb{E}[\lim_{n \to \infty}\langle R(\mathbf{u}),\mathbb{X}_{t_n} \rangle \exp(\langle \mathbf{u}, \mathbb{X}_{t_n} \rangle)].
\]
From the c\`adl\`ag assumption of  $(\langle R(\mathbf{u}),\mathbb{X}_{t} \rangle)_{t\in[0,T]}$  and $\langle \mathbf{u}, \mathbb{X}_{t}\rangle_{t\in[0,T]}$, it follows that\\
$
\langle R(\mathbf{u}),\mathbb{X}_{t_n} \rangle \exp(\langle \mathbf{u}, \mathbb{X}_{t_n} \rangle)$ converges to
$\langle R(\mathbf{u}),\mathbb{X}_{t} \rangle \exp(\langle \mathbf{u}, \mathbb{X}_{t} \rangle)$ if $t_n \downarrow t$  or to\\ $\langle R(\mathbf{u}),\mathbb{X}_{t-} \rangle \exp(\langle \mathbf{u}, \mathbb{X}_{t-} \rangle)$ if $t_n \uparrow t$. Since they are $\mathbb{P}$-a.s.~equal, we can conclude that
\[
\mathbb{E}[\lim_{n \to \infty}\langle R(\mathbf{u}),\mathbb{X}_{t_n} \rangle \exp(\langle \mathbf{u}, \mathbb{X}_{t_n} \rangle)]= \mathbb{E}[\langle R(\mathbf{u}),\mathbb{X}_{t} \rangle \exp(\langle \mathbf{u}, \mathbb{X}_{t} \rangle)],
\]
whence the asserted continuity follows. This together with \eqref{eq:Rint} implies 
\[ 
\partial_t\mathbb{E}[\exp(\langle \mathbf{u}, \mathbb{X}_t \rangle )]=
\langle R(\mathbf{u}), \mathbb{E}[\mathbb{X}_t \exp (\langle \mathbf{u}, \mathbb{X}_t \rangle  )] \rangle.
\]
Since $ \mathbf{u} \mapsto \exp(\langle \mathbf{u}, \mathbf{x} \rangle)$ is componentwise differentiable and $\mathbf{x} \exp(\langle \mathbf{u}, \mathbf{x} \rangle)=\nabla_{\mathbf{u}} \exp(\langle \mathbf{u}, \mathbf{x} \rangle)$,
it remains to argue that we can interchange expectation and differentiation, i.e.
$$\mathbb{E}[\nabla_{\bf u} \exp (\langle \mathbf{u}, \mathbb{X}_t \rangle  )]=\nabla_{\bf u}\mathbb{E}[ \exp (\langle \mathbf{u}, \mathbb{X}_t \rangle  )].$$
Indeed, consider the partial derivative with respect to  the component $\mathbf u_I$ of $\mathbf u$ corresponding to the multi-index $I$ and observe that   for each $\mathbf v\in \Ucal$ with $\v-\u=(\v_I-\u_I)e_I$, $|\v_I-\u_I|\leq \e$ and $\mathbf x \in \Scal$
\[
|\partial_{\mathbf u_I} \exp (\langle \mathbf{v}, \mathbf{x} \rangle)|\leq  |\langle e_I, \mathbf{x} \rangle || \exp (\langle \mathbf{v}, \mathbf{x} \rangle)|
\leq |\langle e_I, \mathbf{x} \rangle |(| \exp (\langle \mathbf{u}+\e e_I, \mathbf{x} \rangle)|+| \exp (\langle \mathbf{u}-\e e_I, \mathbf{x} \rangle)|).
\]
By \eqref{eqn2ii} and the Leibniz integral rule we can  conclude that
\[ 
\partial_t\mathbb{E}[\exp(\langle \mathbf{u}, \mathbb{X}_t \rangle )]=
\langle R(\mathbf{u}), \nabla_{\mathbf{u}}\mathbb{E}[\exp (\langle \mathbf{u}, \mathbb{X}_t \rangle  )] \rangle,
\]
proving that $\mathbb{E}[\exp(\langle \mathbf{u}, \mathbb{X}_t \rangle )]$ is a solution to \eqref{eq:transport}, which exists globally. Note that the equality with the dual operator follows from Remark~\ref{rem:dual}. This proves the assertion. 
\end{proof}

\subsection{The polynomial case}

In analogy to affine processes, polynomial processes are defined as follows.

\begin{definition}\label{def:primalpoly}
Fix $D(\Lcal):=\{\mathbf{x} \mapsto \langle \mathbf{u}, \mathbf{x} \rangle \colon  \mathbf{u} \in \mathcal{U}\}$ for some linear subset $\mathcal{U} \subseteq \Scal^*$  and let $\mathbf{\mathcal{L}}: D(\Lcal)\to M(\Scal)$ be a linear operator. 
We say that $\Lcal$ is
of \emph{$\Ucal$-polynomial type} if there exists 
a map $L: \mathcal{U} \to \mathcal{S}^*, \mathbf{u} \mapsto L(\mathbf{u})$
such that 
\begin{align}\label{eq:primalpoly}
\mathcal{L} (\langle \mathbf{u}, \fdot \rangle)(\mathbf{x})=  \langle L(\mathbf{u}), \mathbf{x} \rangle
\end{align}
for each ${\bf u}\in \Ucal$ and ${\bf x}\in \Scal$. Any solution $(\mathbb{X}_t)_{t\in[0,T]}$ of a martingale problem corresponding
to an operator $\mathcal{L}$  of $\Ucal$-polynomial type is called  $\mathcal{S}$-valued  \emph{$\Ucal$-polynomial process}.
\end{definition}

\begin{remark}\label{rem:poly}
\begin{enumerate}
\item 
Observe that in the literature of polynomial processes operators of polynomial type are typically called \emph{polynomial operators} and the operator $L$ appearing in Definition~\ref{def:primalpoly} is called \emph{1st-dual operator} (see e.g.~\cite{CS:21}). 
We here opted for the notion ``operator of \emph{polynomial type}''
 to be consistent with the terminology used for affine processes. Since in the present paper we are interested in spaces where higher order polynomials can be written as linear functions (see for instance Lemma~\ref{lem1}), we do not need to define higher order dual operators.
 
 \item  \label{rem:dualpoly}
If $\mathcal{L}$ is an operator of $\Ucal$-polynomial type, then we can also associate a dual operator. Indeed, the 
\emph{dual $\Ucal$-polynomial operator} is the linear operator $\Bcal$ 
  acting on $D(\Bcal):=\{g:\Ucal\to \R\colon g({\bf u})=  \langle \mathbf{u}, \mathbf{x} \rangle $ with $\mathbf{x}\in \mathcal{\Scal}\}$ such that
\[
\mathcal{B}(\langle \fdot,  \mathbf{x} \rangle)({\bf u)}:=\mathcal{L} (\langle  \mathbf{u} , \fdot\rangle )({\bf x}), \quad \mathbf{u} \in \Ucal.
\]
Thus $\mathcal{B}(\langle \fdot,  \mathbf{x} \rangle)({\bf u})=\langle L(\mathbf{u}), \mathbf{x} \rangle$ for each ${\bf u} \in \Ucal$ and ${\bf x}\in \Scal$. It can again be interpreted as the restriction of a transport operator which acts on componentwise-differentiable functions $g$ defined on componentwise-semi-open sets $\Ucal$ via
$
\mathcal{B}g(\u)= \langle L(\u), \nabla_{\u} g(\u) \rangle.
$
\end{enumerate}
\end{remark}

In the following proposition we  establish a relation between affine and polynomial processes. We will see (Proposition~\ref{prop2}), that for particular state spaces  affine and polynomial processes coincide. 
For general state spaces $\mathcal{S} \subseteq T((\mathbb{R}^d))$ this does not need to be the case.

\begin{proposition}\label{prop1_affpoly} 
Let $\mathbb{X}$  be an $\mathcal{S}$-valued continuous $\Ucal$-affine process 
with $R:\Ucal\to \Scal^*$ as defined in  \eqref{eq:primalaffine}. Assume that for each ${\bf u}\in \Ucal$ there is some $\lambda\notin \{0,1\}$ such that $\lambda {\bf u}\in \Ucal$. Then $\X$ is an $\mathcal{S}$-valued  $\Span(\Ucal)$-polynomial process 
with $L|_\Ucal:\Ucal\to \Scal^*$ given by
$$L({\bf u}):=\frac \lambda {\lambda-1}R({\bf u})-\frac 1 {\lambda(\lambda-1)}R(\lambda {\bf u}).$$ 
\end{proposition}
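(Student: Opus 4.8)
The plan is to recover the action of $\mathcal{L}$ on linear functionals from its (given) action on exponentials by a finite-difference version of differentiation in the direction of $\mathbf{u}$, exploiting the scaling $\langle\lambda\mathbf{u},\cdot\rangle=\lambda\langle\mathbf{u},\cdot\rangle$. The starting point is that, since $\mathbb{X}$ solves the martingale problem for the $\mathcal{U}$-affine operator $\mathcal{L}$ and is continuous, for every $\mathbf{u}\in\mathcal{U}$ the process $E^{\mathbf{u}}_t:=\exp(\langle\mathbf{u},\mathbb{X}_t\rangle)$ is a continuous semimartingale whose finite-variation part is $\int_0^t E^{\mathbf{u}}_s\langle R(\mathbf{u}),\mathbb{X}_s\rangle\,ds$; this is precisely \eqref{eqn11new} read as a semimartingale decomposition. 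First I would fix $\mathbf{u}\in\mathcal{U}$ together with a scalar $\lambda\notin\{0,1\}$ with $\lambda\mathbf{u}\in\mathcal{U}$, as provided by hypothesis, and record the analogous decomposition for $E^{\lambda\mathbf{u}}$.

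Next I would pass from the exponentials to the linear functional $Y^{\mathbf{u}}_t:=\langle\mathbf{u},\mathbb{X}_t\rangle$. Since $E^{\mathbf{u}}$ is a strictly positive continuous semimartingale (for real $\mathbf{u}$; the complex case is handled by applying the same computation to the entire map $\exp$, or by splitting into real and imaginary parts), Itô's formula applied to $\log$ shows that $Y^{\mathbf{u}}=\log E^{\mathbf{u}}$ is again a continuous semimartingale, with drift $\langle R(\mathbf{u}),\mathbb{X}_s\rangle\,ds-\tfrac12\,d\langle Y^{\mathbf{u}}\rangle_s$. The crucial structural input is then the linearity of the pairing: $Y^{\lambda\mathbf{u}}=\lambda Y^{\mathbf{u}}$, so that $d\langle Y^{\lambda\mathbf{u}}\rangle=\lambda^2\,d\langle Y^{\mathbf{u}}\rangle$ while the drift of $Y^{\lambda\mathbf{u}}$ equals $\lambda$ times that of $Y^{\mathbf{u}}$. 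Writing the Itô relation once for $\mathbf{u}$ and once for $\lambda\mathbf{u}$ and equating the two expressions for the drift of $Y^{\lambda\mathbf{u}}$ yields a $2\times2$ linear system in the $ds$-densities $p$ of the drift of $Y^{\mathbf{u}}$ and $q$ of $\langle Y^{\mathbf{u}}\rangle$, namely $p=\langle R(\mathbf{u}),\mathbb{X}_s\rangle-\tfrac12 q$ and $\lambda p=\langle R(\lambda\mathbf{u}),\mathbb{X}_s\rangle-\tfrac12\lambda^2 q$. Eliminating $q$ leaves $p=\big\langle \tfrac{\lambda}{\lambda-1}R(\mathbf{u})-\tfrac{1}{\lambda(\lambda-1)}R(\lambda\mathbf{u}),\,\mathbb{X}_s\big\rangle=\langle L(\mathbf{u}),\mathbb{X}_s\rangle$, which is exactly the claimed formula, the half quadratic-variation contributions cancelling by the choice of coefficients.

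Having identified the drift, I would conclude that $\langle\mathbf{u},\mathbb{X}_t\rangle-\langle\mathbf{u},\mathbb{X}_0\rangle-\int_0^t\langle L(\mathbf{u}),\mathbb{X}_s\rangle\,ds$ is a local martingale, i.e.\ that the operator defined by $\mathcal{L}(\langle\mathbf{u},\cdot\rangle):=\langle L(\mathbf{u}),\cdot\rangle$ is of polynomial type and that $\mathbb{X}$ solves its martingale problem in the sense of Section~\ref{subsec:affine}. It then remains to extend $L$ from $\mathcal{U}$ to $\Span(\mathcal{U})$: since the finite-variation part of the continuous semimartingale $\langle\mathbf{u},\mathbb{X}\rangle$ is unique and depends linearly on $\mathbf{u}$, defining $L$ on $\Span(\mathcal{U})$ by linearity is consistent, and $L(\mathbf{u})\in\mathcal{S}^*$ because $\mathcal{S}^*$ is a vector space (Remark~\ref{rem4}) containing $R(\mathbf{u})$ and $R(\lambda\mathbf{u})$. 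The adaptedness and càdlàg (here continuous) requirements on $\langle\mathbf{u},\mathbb{X}\rangle$ and $\langle L(\mathbf{u}),\mathbb{X}\rangle$ are inherited from the affine process.

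The main obstacle I anticipate is the rigorous stochastic calculus in this infinite-dimensional setting: one must justify that $\langle\mathbf{u},\mathbb{X}_t\rangle$, defined through a convergent (possibly infinite) sum, is genuinely a continuous semimartingale to which Itô's formula applies, and that its quadratic variation can be consistently identified and manipulated, in particular that $\langle\lambda\mathbf{u},\mathbb{X}\rangle=\lambda\langle\mathbf{u},\mathbb{X}\rangle$ propagates to quadratic variations. A secondary technical point is the treatment of complex $\mathbf{u}$, where the passage through $\log$ is unavailable and one should argue via the entire function $\exp$ directly (or via real and imaginary parts), together with the verification that $L(\mathbf{u})$ does not depend on the particular admissible $\lambda$ chosen, which follows \emph{a posteriori} from the uniqueness of the drift.
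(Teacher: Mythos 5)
Your proposal is correct and follows essentially the same route as the paper's proof: both exploit the identity $\exp(\langle\lambda\mathbf{u},\mathbb{X}\rangle)=\exp(\langle\mathbf{u},\mathbb{X}\rangle)^{\lambda}$ together with It\^o's formula to set up two drift equations (for $\mathbf{u}$ and $\lambda\mathbf{u}$), eliminate the quadratic-variation term, and read off the drift of $\langle\mathbf{u},\mathbb{X}\rangle$ as $\langle L(\mathbf{u}),\mathbb{X}\rangle$. The only difference is bookkeeping — you work with $Y^{\mathbf{u}}=\log E^{\mathbf{u}}$ and solve a $2\times 2$ linear system, whereas the paper first extracts $d[Y]_t$ from the $Y^{\lambda}$ equation and then applies It\^o to $\log Y$ — and your closing remarks on the infinite-dimensional semimartingale justification and the complex case are reasonable caveats that the paper's proof leaves implicit.
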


\begin{proof}
Define $D(\Lcal):=\{\mathbf{x} \mapsto \langle \mathbf{u}, \mathbf{x} \rangle \colon  \mathbf{u} \in \mathcal{U}\}$, $\Lcal:D(\Lcal)\to M (\Scal)$ as 
$\mathcal{L} (\langle \mathbf{u}, \fdot \rangle)(\mathbf{x})=  \langle L(\mathbf{u}), \mathbf{x} \rangle,
$
and note that it is of $\Span(\Ucal)$-polynomial type. To prove that $\X$ is a $\Span(\Ucal)$-polynomial process we verify that it solves the martingale problem for $\Lcal$. Properties~\ref{iti} and \ref{itii} are clear. For property~\ref{itiii}, note that since the process $(Y_t)_{t\in[0,T]}$ given by $Y_t:=\exp(\langle{\bf u},\X_t\rangle)$ is continuous, an application of the It\^o formula yields
$$dY_t^\lambda=\lambda Y_t^{\lambda-1}dY_t+\frac 1 2 \lambda(1-\lambda)Y_t^{\lambda-2}d[Y]_t.$$
On the other hand, since ${\bf u},\lambda {\bf u}\in\Ucal$ by definition of martingale problem we also have that
\begin{align*}
dY_t&=\exp(\langle \mathbf{ u}, \X_t \rangle) \langle R(\mathbf{u}), \X_t \rangle dt+d(\text{local martingale}),\\
dY_t^\lambda&=\exp(\langle \lambda\mathbf{ u}, \X_t \rangle) \langle R(\lambda\mathbf{u}), \X_t \rangle dt +d(\text{local martingale}).
\end{align*}
We can thus conclude that
\begin{align*}
d[Y]_t&=\frac 2 {\lambda(\lambda-1)Y^{\lambda-2}_t}\big(\exp(\langle \lambda{\bf u},\X_t\rangle)\langle R(\lambda{\bf u}),\X_t\rangle-\lambda Y_t^{\lambda-1}\exp(\langle {\bf u},\X_t\rangle)\langle R({\bf u}),\X_t\rangle\big)dt\\
&=\Big(\frac {2Y_t^2} {\lambda(\lambda-1)}\langle R(\lambda {\bf u}),\X_t\rangle -\frac {2 Y_t^2} {(\lambda-1)}\langle R({\bf u}),\X_t\rangle\Big)dt.
\end{align*}
An application of It\^o's formula yields 
$$d\log(Y_t)=\frac 1 {Y_t}\exp(\langle {\bf u},\X_t\rangle)\langle R({\bf u}),\X_t\rangle dt-\frac 1 {2Y_t^2}d[Y]_t+d\text{(local martingale)},$$
which can be rewritten as
$\langle {\bf u},\X_t\rangle-\langle {\bf u},{\bf x}_0\rangle
-\int_0^t\langle L({\bf u}),\X_s\rangle ds=\text{(local martingale)}.$
\end{proof}

The analog of the affine transform formula for polynomial processes is now given by the following moment formula.

\begin{theorem}\label{thm:momentpoly}
Fix $T >0 $ and let $(\mathbb{X}_t)_{t\in[0,T]}$  be an $\mathcal{S}$-valued  $\Ucal$-polynomial process with initial value $\mathbf{x}_0$ and  operator $\mathcal{L}$ of $\Ucal$-polynomial type as of Definition~\ref{def:primalpoly}.
Fix $\u\in \Ucal$ and suppose that  the extended tensor algebra valued linear ODE
\begin{equation}\label{eq:linODE}
 \mathbf{c}(t) = \u+\int_0^tL(\mathbf{c}(s)) ds
\end{equation}
admits an $\mathcal{U}$-valued  solution $({\bf c}(t))_{t\in[0,T]}$
such that $\int_0^T |L({\bf c}(t))|_\x dt<\infty$
for all ${\mathbf{x}} \in \mathcal{S}$.
If 
\begin{equation} \label{eq:intcond2}
\mathbb{E}[\sup_{s,t \leq T} |\langle \mathbf{c}(s), \mathbb{X}_t \rangle|] < \infty \qquad\text{and}\qquad
 \mathbb{E}[\sup_{s, t \leq T} | \langle L(\mathbf{c}(s)), \mathbb{X}_t \rangle|] < \infty,
\end{equation}
then it holds that
$
\mathbb{E}[\langle \mathbf{u}, \mathbb{X}_T \rangle ]= \langle \mathbf{c}(T) , \mathbf{x}_0 \rangle.
$
\end{theorem}

\begin{proof}
We verify the conditions of Lemma~\ref{lemA} for 
$$X(s,t)=\langle{\bf c}(s),\X_t\rangle\quad\text{and}\quad Y(s,t)= \langle L({\bf c}(s)), \mathbb{X}_t \rangle.$$
By definition of $\Ucal$-polynomial processes, the process $(N_t^{\mathbf{u}})_{t\in[0,T]}$ given by
\begin{equation}\label{eqn25}
dN_t^{\mathbf{u}}:=d\langle \mathbf{u}, \mathbb{X}_t\rangle - \langle L (\mathbf{u}), \mathbb{X}_t \rangle  dt,\qquad N_0^\u=0,
\end{equation}
is a local martingale.
Using that
\begin{align*}
\mathbb{E}[\sup_{t \leq T} |N^{\bf u}_t |] &\leq 2 \mathbb{E}[\sup_{t \leq T} |\langle {\bf u}, \mathbb{X}_t \rangle|]  +T \mathbb{E}[\sup_{t \leq T} |\langle L({\bf u}), \mathbb{X}_t\rangle |]\\
& \leq 2 \mathbb{E}[\sup_{s,t \leq T} |\langle {\bf c}(s), \mathbb{X}_t \rangle|] +T \mathbb{E}[\sup_{s, t \leq T} |\langle L({\bf c}(s)), \mathbb{X}_t\rangle|] < \infty
\end{align*}
holds by \eqref{eq:intcond2}, we can conclude that $(N^{{\bf u}}_t)_{t\in[0,T]}$ is a true martingale for all ${\bf u} \in \Ucal$.
Furthermore, for each ${\bf x}\in \Scal$ by \eqref{eq:linODE} and $\int_0^T |L({\bf c}(r))|_\x dt<\infty$ we have
\[
\langle \mathbf{c}(s), \mathbf{x} \rangle  - \langle {\bf u}, \mathbf{x} \rangle  - \int_0^s \langle L(\mathbf{c}(r)), \mathbf{x} \rangle  dr=0,
\]
which is thus a constant martingale.
Due to \eqref{eq:intcond2} all the assumptions of Lemma~\ref{lemA} are thus satisfied and the assertion follows.
\end{proof}

\begin{remark}\label{rem10}
Also in this case an inspection of the proof and Lemma~\ref{lemA} shows that \eqref{eq:intcond2} can be replaced by the following two properties.
\begin{enumerate}
\item The process $(N_t^{{\bf u}})_{t\in[0,T]}$ given by \eqref{eqn25}
is a true martingale for each ${\bf u}\in \Ucal$. 
\item $\int_0^T\int_0^T \E[|\langle L({\bf c}(s)), \X_t \rangle |]ds dt<\infty$.
\end{enumerate}
Similarly, conditions \eqref{eq:linODE} and $\int_0^T|L({\bf c}(s))|_\x ds<\infty$ can be replaced by assuming
$$
\langle{\bf c}(t),\x\rangle = \langle\u,\x\rangle+\int_0^t \langle L({\bf c}(s)),\x\rangle ds, \qquad \x\in \Scal.
$$
\end{remark}

\begin{remark}
Note that in the setting of polynomial processes we can also get analogous statements to Corollary~\ref{cor1} and Theorem~\ref{thm:affinetrans}.
\begin{enumerate}
\item The analog of Corollary~\ref{cor1} reads as follows.
Suppose that the conditions of Theorem~\ref{thm:momentpoly} are satisfied. Then $\mathbb{E}[\langle\u,\mathbb{X}_T\rangle]=v(T,\u)$ for some map $v:[0,T] \times \Ucal \to \mathbb{R}$ that satisfies
$\partial_t v(t, {\bf c}(s))=\partial_s v(t, {\bf c}(s))$ for all $s,t \in [0,T]$ and  $v(0, \u)=\langle \u, \x \rangle$.
This means that  $t \mapsto \mathbb{E}[\langle\u, \mathbb{X}_t\rangle]$ is a solution of a transport equation along $s \mapsto {\bf c}(s)$.
\item Under analogous conditions as of Theorem~\ref{thm:affinetrans}, we also get that $\mathbb{E}[\langle\u,\mathbb{X}_t\rangle]=v(t,\u)$, where $v(t, \u)$
satisfies
$
\partial_t v(t, \u)=\Bcal v(t, \u)=\langle L(\u), \nabla_{\u} v(t,\u) \rangle,
$
with $\Bcal$  given by Remark~\ref{rem:poly}~\ref{rem:dualpoly}. As in Remark~\ref{rem:bidual}, this just translates to
$
\partial_t \mathbb{E}[\langle\u,\mathbb{X}_t\rangle]=\langle L(\u),\mathbb{E}[\mathbb{X}_t]  \rangle
$
and -- if the set $\mathcal{U}$ is large enough -- to 
$
\partial_t \mathbb{E}[\mathbb{X}_t]= L'\mathbb{E}[\mathbb{X}_t],
$
where $L': \Scal \to \Scal$ is the adjoint operator of $L$, which in the terminology of \cite{CS:21}  corresponds again to the so-called bidual moment formula. 

\end{enumerate}
\end{remark}

\section{Signature SDEs and their signature process}\label{sec4}

We introduce signature SDEs as (It\^o) stochastic processes with characteristics depending in entire  way (made precise below) on its signature components. The first two subsections are devoted to a detailed analysis of the state space of signature processes and  functions thereon. 
Then we are led to our main results: such processes appear as linear projections of affine or polynomial processes taking values in subsets of so-called group-like elements. This class of processes is large (and to some extent universal) on the one hand; on the other hand affine or polynomial processes allow for very explicit descriptions of their laws in terms of Fourier-Laplace transforms.

\subsection{State space of signature processes, shuffle-compatible spaces of dual elements and entire functions}
\label{sigstatespace}
Fix some subset $S \subseteq \mathbb{R}^{d}$.
We start by introducing the state space of the signature process (see Definition~\ref{def2}) of a stochastic process $X$ satisfying $X_t-X_0\in S$ for each $t\in[0,T]$. 

\begin{definition}\label{def:Sgrouplike}
For a given subset $S\subseteq \R^d$ define
\begin{equation*}\label{eq:sigspace}
\begin{aligned}
\Scal(S)&:=\{ \mathbf x\in T((\R^d))\colon \pi_1(\x)\in S,\ 
 \co \x \emptyset=1,\text{ and } \co \x I\co \x J=\langle e_I\shuffle e_J,\mathbf x\rangle\ \quad  \forall |I|,|J|\geq0\}.
\end{aligned}
\end{equation*}
\end{definition}
The set $\Scal(S)$ corresponds to the set of group-like elements of $T((\R^d))$ whose first level lies in $S$. More precisely, define the commutator via $[e_i,e_j]:=e_i\otimes e_j-e_j\otimes e_i$
(which extends to all elements $\a \in T^{(N)}(\mathbb{R}^d)$ with $\a_\emptyset=0$) and consider
\[
g^N:=0 \oplus S \oplus [\mathbb{R}^d, \mathbb{R}^d]\oplus \cdots \oplus \underbrace{[\mathbb{R}^d, [\ldots,[\mathbb{R}^d, \mathbb{R}^d]]]}_{N-1 \text{ brackets}} \subseteq T^N(\mathbb{R}^d).
\]
Denoting by $G^N:=\{\sum_{k=0}^N\frac{\u^{\otimes k}}{k!}\colon \u \in g^N\}
$ its exponential image truncated at level $N$ we get
$$
\Scal(S)=\{{\x} \in T((\mathbb{R}^d)) : \x^{\leq N} \in G^N,\ \pi_1(\x)\in S\}.
$$
{Note that $G^N$ can also be defined via
\begin{equation}\label{eq:GN}
\begin{aligned}
G^N&:=\{ \mathbf x\in T^N(\R^d)\colon  \x_\emptyset=1,\text{ and } \co \x I\co \x J=\langle e_I\shuffle e_J,\mathbf x\rangle\ \quad  \forall |I|,|J|\geq0\}.
\end{aligned}
\end{equation}}
In the current setting we shall choose the semi-norms defined in \eqref{eqn5}  such that if $I\in\Pi_k$ then $\sigma(I)\in \Pi_k$ for each permutation $\sigma$.
Under this assumption the set $\Scal(S)^*$ is indeed closed with respect to the shuffle product (see Lemma~\ref{lem1}).
Recall that 
$\Pi=\{\Pi_1,\Pi_2,\Pi_3,\ldots\}$ denotes a partition of the set $\{I\colon|I|\geq 0\}$ and  all $\Pi_k$ just contain indices of the same length.

 \begin{definition}\label{def:shufflecomp}
We say that a partition $\Pi$ of multi-indices is \emph{shuffle-compatible} if for each $k_1,k_2$ there is a $k$ such that for each $I_1\in \Pi_{k_1},I_2\in \Pi_{k_2}$ we can write
$$ e_{I_1}\shuffle e_{I_2}=\sum_{I\in \Pi_k}\lambda_I e_{I}
$$
for some $\lambda_I\in \R$. In this case we write $k=k_1\shuffle k_2$.
\end{definition}

\begin{lemma} \label{lem1}
The following results hold.
\begin{enumerate}

\item\label{it11i} If the process $X-X_0$ takes value in $S$, the corresponding signature process $\mathbb{X}$ takes value in $\Scal(S)$.
\item\label{it11ii} Suppose that $\Pi$ is shuffle-compatible. Then  given ${\bf u},{\bf v}\in\Scal(S)^*$ it holds ${\bf u}\shuffle{\bf v}\in \Scal(S)^*$,
$|\u\shuffle\v|_\x\leq|\u|_\x|\v|_\x$, and $\langle \u,\x\rangle\langle\v,\x\rangle=\langle \u\shuffle\v,\x\rangle$ for each $\x\in \Scal(S)$.

\item\label{itlem3}
Suppose that $\Pi$ is shuffle-compatible.
Then given $\u\in \Scal(S)^*$ it holds $\exp(\shuffle \u)\in \Scal(S)^*$, $|\exp(\shuffle \u)|_\x\leq\exp(| \u|_\x)$, and 
$\exp(\langle \u,\x\rangle)
=\langle \exp(\shuffle \u),\x\rangle$ for each $\x\in \Scal(S)$.

\item\label{it11iii} {Suppose that $\Pi$ is given by Example~\ref{ex2}\ref{it3ii} and 
 $\mathbf u\in T((\R^d))+\text{i}T((\R^d))$ satisfies
\begin{equation}\label{eqn1}
 \mathbf u_{I}= \mathbf u_{I^{ord}}
 \end{equation}
 for every $|I|\geq0$. Setting
 $x^I:=x_{i_1}\cdots x_{i_{|I|}}$ we then get that 
 $\mathbf u\in \Scal(S)^*$ if and only if
$$g_{\mathbf u}({x}):=\sum_{I\colon I=I^{ord}}
{\bf u}_I\frac {x^I} {I!}$$
 converges absolutely for each $x\in S$. In this case,  for each ${\bf x}\in \Scal(S)$ it holds $$\langle \mathbf{u}, \mathbf{x}\rangle=g_{\mathbf u}(\pi_1(\x))\qquad \text{and}\qquad|\u|_\x=\sum_{I\colon I=I^{ord}}
|{\bf u}_I|\frac {|\pi_1(\x)^I|} {I!}.$$}
 
  \item\label{it11v} Suppose that $\Pi$ is shuffle-compatible, $\u$ satisfies  \eqref{eqn1},   and  $\u_\infty:=\sup_{I}|\u_I|<\infty$. Then $\u\in \Scal(S)^*$ and for each $\x\in \Scal(S)$ we get $|\u|_{\x}\leq \u_\infty|\x|_{\ell^1}$ for 
 $$|\x|_{\ell^1}:=\sum_{k=1}^\infty\Big|\sum_{I\in \Pi_k}\x_I\Big|$$
and
  $|\x|_{\ell^1}\leq \exp(|\x_{(1)}|+\cdots+|\x_{(d)}|).$
 \end{enumerate}
\end{lemma}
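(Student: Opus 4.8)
My plan is to treat the five assertions in order, with \ref{it11ii} serving as the algebraic core on which \ref{itlem3} is built, while \ref{it11iii} and \ref{it11v} rest on a single group-like identity for rearrangement classes. Assertion \ref{it11i} is immediate: I would verify the three defining conditions of $\Scal(S)$ in Definition~\ref{def:Sgrouplike} for the signature $\X_t$, namely $\langle e_\emptyset,\X_t\rangle=1$ and $\pr{\X_t}{1}=X_t-X_0\in S$ from Definition~\ref{def2}, together with $\langle e_I,\X_t\rangle\langle e_J,\X_t\rangle=\langle e_I\shuffle e_J,\X_t\rangle$ from the shuffle property (Theorem~\ref{th:shuffle}).

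The heart of the lemma is \ref{it11ii}. I would introduce the block sums $U_k:=\sum_{I\in\Pi_k}\co\u I\co\x I$, with $V_k$ and $W_k$ defined analogously from $\v$ and $\u\shuffle\v$. Since each length level contains only finitely many indices and $|\u|_\x=\sum_k|U_k|<\infty$, the defining series \eqref{eq:uxana} regroups blockwise to give $\langle\u,\x\rangle=\sum_k U_k$ with absolute convergence, and likewise for $\v$ and $\u\shuffle\v$. The decisive step is the identity
\[
W_k=\sum_{k_1\shuffle k_2=k}U_{k_1}V_{k_2},
\]
which I would derive by inserting $(\u\shuffle\v)_I=\sum_{|I_1|+|I_2|=|I|}\co\u{I_1}\co\v{I_2}\langle e_I,e_{I_1}\shuffle e_{I_2}\rangle$, interchanging summations, and observing that, by shuffle-compatibility (Definition~\ref{def:shufflecomp}), the inner sum $\sum_{I\in\Pi_k}\co\x I\langle e_I,e_{I_1}\shuffle e_{I_2}\rangle$ equals $\langle e_{I_1}\shuffle e_{I_2},\x\rangle=\co\x{I_1}\co\x{I_2}$ when $k=k_1\shuffle k_2$ (for $I_i\in\Pi_{k_i}$) and vanishes otherwise, the last equality being the group-like relation on $\Scal(S)$. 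Submultiplicativity $|\u\shuffle\v|_\x=\sum_k|W_k|\le\sum_{k_1,k_2}|U_{k_1}||V_{k_2}|=|\u|_\x|\v|_\x$ (hence $\u\shuffle\v\in\Scal(S)^*$) and the product formula $\langle\u,\x\rangle\langle\v,\x\rangle=\big(\sum_{k_1}U_{k_1}\big)\big(\sum_{k_2}V_{k_2}\big)=\sum_k W_k=\langle\u\shuffle\v,\x\rangle$ then follow from this identity together with absolute convergence.

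The remaining items build on this. For \ref{itlem3} I would iterate \ref{it11ii} to get $\u^{\shuffle m}\in\Scal(S)^*$ with $|\u^{\shuffle m}|_\x\le|\u|_\x^m$ and $\langle\u^{\shuffle m},\x\rangle=\langle\u,\x\rangle^m$; since each block $\Pi_k$ is finite and every component of $\exp(\shuffle\u)$ is a finite sum (Definition~\ref{def:expshuffle}), I may interchange $\sum_k$ with $\sum_m\frac{1}{m!}$ to conclude $|\exp(\shuffle\u)|_\x\le\sum_m\frac{|\u|_\x^m}{m!}=\exp(|\u|_\x)$ and $\langle\exp(\shuffle\u),\x\rangle=\sum_m\frac{\langle\u,\x\rangle^m}{m!}=\exp(\langle\u,\x\rangle)$. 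For \ref{it11iii} and \ref{it11v} the shared tool is that, for $\x\in\Scal(S)$ with $x:=\pr{\x}{1}$, Remark~\ref{rem1}\ref{it6iii} and the shuffle property give $\sum_{I\colon I^{ord}=J}\co\x I=\frac{1}{J!}\langle e_{j_1}\shuffle\cdots\shuffle e_{j_{|J|}},\x\rangle=\frac{x^J}{J!}$. Under \eqref{eqn1} the coefficient $\co\u I$ is constant ($=\co\u J$) on each rearrangement class, so for the partition of Example~\ref{ex2}\ref{it3ii}, whose blocks are exactly these classes, one obtains $|\u|_\x=\sum_{J=J^{ord}}|\co\u J|\frac{|x^J|}{J!}$ and $\langle\u,\x\rangle=g_{\u}(x)$; as every $x\in S$ is realised as $\pr{\x}{1}$ of some $\x\in\Scal(S)$ (e.g.\ its tensor exponential), this yields the claimed equivalence with absolute convergence of $g_\u$. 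For \ref{it11v} the same reduction and $|\co\u J|\le\u_\infty$ give, for any shuffle-compatible $\Pi$, $|\u|_\x\le\sum_{J=J^{ord}}|\co\u J|\frac{|x^J|}{J!}\le\u_\infty\sum_{J=J^{ord}}\frac{|x^J|}{J!}=\u_\infty\prod_{i=1}^d\exp(|x_i|)=\u_\infty\exp\big(\sum_{i=1}^d|x_i|\big)<\infty$, whence $\u\in\Scal(S)^*$; the bound $|\u|_\x\le\u_\infty|\x|_{\ell^1}$ then holds blockwise for the class-partition, and in all cases the triangle inequality gives $|\x|_{\ell^1}\le\sum_{J=J^{ord}}\frac{|x^J|}{J!}=\exp\big(\sum_{i=1}^d|x_i|\big)$.

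The main obstacle is not any single estimate but the systematic justification of the rearrangements: each identity rests on absolute convergence, supplied by finiteness of $|\cdot|_\x$ together with the finiteness of every length level, and this must be checked before the sums are manipulated. The one genuinely non-formal point is the block identity in \ref{it11ii}, where shuffle-compatibility is indispensable precisely because it confines $e_{I_1}\shuffle e_{I_2}$ to a single block; without it the collapse to $\co\x{I_1}\co\x{I_2}$ fails and neither submultiplicativity nor the product formula survives. A secondary subtlety in \ref{it11v} is that the passage to rearrangement classes must precede the bound $|\co\u J|\le\u_\infty$, since \eqref{eqn1} controls $\co\u I$ only through $\co\u{I^{ord}}$; it is through the dominating series $\sum_{J=J^{ord}}\frac{|x^J|}{J!}=\exp(\sum_i|x_i|)$, rather than a termwise comparison of $|\u|_\x$ with $|\x|_{\ell^1}$, that membership in $\Scal(S)^*$ is most safely obtained.
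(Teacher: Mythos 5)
Your proposal is correct and follows essentially the same route as the paper's proof: the same block-sum identity (driven by shuffle-compatibility and the group-like relation) for \ref{it11ii}, with the product formula obtained via the Cauchy product of absolutely convergent series where the paper cites Mertens' theorem; the same iteration of \ref{it11ii} plus dominated convergence for \ref{itlem3}; and the same use of Remark~\ref{rem1}\ref{it6iii} with the shuffle property, together with the reduction to the rearrangement-class partition, for \ref{it11iii} and \ref{it11v}. No gaps.
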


\begin{proof}
\ref{it11i} follows directly from Remark~\ref{shuffprop}\ref{th:shuffle}.
{ For \ref{it11ii} 
observe that  
\begin{align*}
    \langle e_{J},\u\shuffle \v\rangle&=\sum_{|J_1|,|J_2|\geq0}\co \u{J_1}\co \v{J_2}\langle e_{J},e_{J_1}\shuffle e_{J_2}\rangle,\\
   \co\x {J_1}\co \x {J_2}
    &=\langle e_{J_1}\shuffle e_{J_2},\x\rangle
    =\sum_{|J|\geq0}\co \x J\langle e_{J_1}\shuffle e_{J_2},e_{J}\rangle.
\end{align*}
For fixed $k$  we thus get
\begin{align*}
\sum_{I\in \Pi_k}\langle e_{I},\u\shuffle \v\rangle\co \x I
&=
\sum_{I\in \Pi_k}
\Big(\sum_{k_1,k_2=1}^\infty
\sum_{I_1\in \Pi_{k_1}}
\sum_{I_2\in \Pi_{k_2}}
\co \u {I_1}\co \v {I_2}
\langle e_{I},e_{I_1}\shuffle e_{I_2}\rangle1_{\{k={k_1\shuffle k_2}\}}\Big)
 \co \x I\\
 &=
 \sum_{k_1,k_2=1}^\infty
\sum_{I_1\in \Pi_{k_1}}
\sum_{I_2\in \Pi_{k_2}}
\co \u {I_1}\co \v {I_2}
\Big(\sum_{I\in \Pi_k}
 \co \x I\langle e_{I},e_{I_1}\shuffle e_{I_2}\rangle\Big)1_{\{k={k_1\shuffle k_2}\}}
\\
&=
\sum_{k_1,k_2=1}^\infty
\sum_{I_1\in \Pi_{k_1}}
\sum_{I_2\in \Pi_{k_2}}
\co \u {I_1}\co \v {I_2}\co \x {I_1}\co \x {I_2}1_{\{k={k_1\shuffle k_2}\}}.
\end{align*}

Since $\Pi$ is shuffle compatible for fixed $k_1,k_2$ it holds
$\sum_{k=1}^\infty1_{\{k_1\shuffle k_2=k\}}=1$
and hence
\begin{align*}
|\u\shuffle \v|_{\x}
&=\sum_{k=1}^\infty\Big|\sum_{I\in \Pi_k}\langle e_{I},\u\shuffle \v\rangle\co \x I\Big| \\
&\leq \sum_{k=1}^\infty
\sum_{k_1,k_2=1}^\infty\Big|
\sum_{I_1\in \Pi_{k_1}}
\sum_{I_2\in \Pi_{k_2}}
\co \u {I_1}\co \v {I_2}\co \x {I_1}\co \x {I_2}
\Big|1_{\{k_1\shuffle k_2=k\}}\\
&\leq 
\sum_{k_1,k_2=1}^\infty
\Big|
\sum_{I_1\in \Pi_{k_1}}
\sum_{I_2\in \Pi_{k_2}}
\co \u {I_1}\co \v {I_2}\co \x {I_1}\co \x {I_2}
\Big|=|\u|_{\x}| \v|_{\x}<\infty.
\end{align*}
The second part of the claim follows by Mertens' theorem for Cauchy products.

For \ref{itlem3}, observe that by \ref{it11ii} we have
    $$|\exp(\shuffle \u)|_\x
    \leq\sum_{n=1}^\infty\sum_{k=0}^\infty \frac 1 {k!}\Big|\sum_{I\in \Pi_n} \langle\u^{\shuffle k},e_I\rangle \x_I\Big|
    =\sum_{k=0}^\infty \frac 1 {k!} |\u^{\shuffle k}|_\x
    \leq \sum_{k=0}^\infty \frac 1 {k!} |\u|^k_\x
    \leq \exp(|\u|_\x).$$
The claim now follows by the dominated convergence theorem.

Concerning \ref{it11iii}, by Remark~\ref{rem1}\ref{it6iii} and Remark~\ref{shuffprop}\ref{th:shuffle}, for every ${\bf u} \in T((\R^d))+\text{i}T((\R^d))$ and ${\bf x}\in \Scal(S)$ such that \eqref{eqn1} holds  we have 
\begin{align*}
&|\u |_{\x}
=\sum_{I\colon I=I^{ord}}|\co \u I|\Big|\sum_{J\colon J^{ord}=I}\co \x J\Big|
=\sum_{I\colon I=I^{ord}}|\co \u I|\frac 1 {I! }|\pi_1(\x)^I|.
\end{align*}
Similarly, one also gets $\langle \mathbf{u}, \mathbf{x}\rangle=g_{\mathbf u}(\pi_1(\x))$ whenever the involved quantities are well-defined.}

For \ref{it11v} observe that the partition  given by Example~\ref{ex2}\ref{it3ii} is finer than each shuffle compatible partition. We can  thus assume without loss of generality that $\Pi$ is given by Example~\ref{ex2}\ref{it3ii}.
Set  $\v:=\sum_{|I|\geq0}e_I$ and note that  Lemma~\ref{lem1}\ref{it11iii} yields
$$
|\x|_{\ell^1}
=|\v|_\x
=\sum_{I\colon I=I^{ord}}
|\v_I|\frac {|x^I|} {I!}
=\sum_{I\colon I=I^{ord}}
\frac {|x^I|} {I!}
=\exp(|\x_{(1)}|+\cdots+|\x_{(d)}|).
$$
 As
$|\u|_\x
\leq \sum_{I\colon I=I^{ord}}|\co \u I||\sum_{ J\colon J^{ord}=I}\co \x J|
\leq|\u|_\infty|\x|_{\ell^1},
$
the claim follows.
\end{proof}

\begin{remark}
The above lemma shows that the choice of the partition considered in  Example~\ref{ex2}\ref{it3ii} leads to the usual definition of multivariate power series, a setting where -- due to the commutativity -- ordering does not matter.
\end{remark}

These considerations motivate the following notion of entire functions.

\begin{definition}\label{def:entire}
Consider the shuffle-compatible  partition
of Example~\ref{ex2}\ref{it3iii}
and for $\x \in \Scal(S)$ and $\u \in \Scal(S)^*$, let $\langle \u, \x \rangle$ be defined as in \eqref{eq:uxana}.
Then, we call the set of maps
$\{g: \Scal(S) \to \mathbb{R}, \,
\x \mapsto \langle \u, \x \rangle \, |\,  \u \in \Scal(S)^*\}$ \emph{entire maps on group-like elements}. 
\end{definition}

In the sequel we shall consider generic shuffle-compatible partitions and the corresponding pairing. The one of Example~\ref{ex2}\ref{it3iii} is however particularly interesting as it provides the largest space of entire functions, that  are also intrinsically connected with classical theory of real-analytic functions on path spaces (see \cite{CPST:24}).

\subsection{Signature SDEs as affine and polynomial processes}\label{sec:SIGSDE}

We consider $\Scal(S)\subseteq T((\mathbb{R}^{d}))$ as defined in \eqref{def:Sgrouplike} as state space of affine and polynomial processes and fix some shuffle-compatible partition $\Pi$. 
Before introducing signature SDEs, we start by a modification of Proposition~\ref{prop1_affpoly} showing that $\Scal(S)$-valued polynomial and affine processes coincide.
This is due to the fact that for each set $\Ucal\subseteq \Scal(S)^*$ which is closed under the shuffle product
and each ${\bf x}\in \Scal(S)$ it holds that the {set $\{\langle {\bf u}, \fdot \rangle \colon {\bf u}\in \Ucal\}$ forms an algebra over $\R$ and is thus closed with respect to sums, products and scalar multiplications.} This shows that polynomials on $\mathcal{S}$ appear as restrictions of linear maps.

\begin{proposition}\label{prop2}
Let $\mathcal{S}(S)$ be given by \eqref{eq:sigspace} for some subset $S \subseteq \mathbb{R}^d$.

    Let $\mathbb{X}$  be a continuous $\mathcal{S}(S)$-valued  $\Ucal$-affine process with 
$R:\Ucal\to \Scal(S)^*$ as defined in \eqref{eq:primalaffine}.
Then $\X$ is an $\mathcal{S}(S)$-valued  $\Vcal$-polynomial process 
with 
$\Vcal:=\Span\{\exp(\shuffle \u)\colon \u\in \Ucal\}$
and
$L:\Vcal\to \Scal(S)^*$ satisfying
$L(\exp(\shuffle \u))=\exp(\shuffle \u)\shuffle R(\u). $

Let $\mathbb{X}$  be a continuous $\mathcal{S}(S)$-valued  $\Ucal$-polynomial process with 
$L:\Ucal\to \Scal(S)^*$ as defined in \eqref{eq:primalpoly}.
Assume that for each ${\bf u}\in \Ucal$ we also have that $ {\bf u}\shuffle {\bf u}\in \Ucal$. Then $\X$ is an  $\mathcal{S}(S)$-valued  $\Ucal$-affine process 
with
$R:\Ucal\to \Scal(S)^*$ satisfying
$$R({\bf u})=L({\bf u})+\frac 12 L({\bf u}\shuffle{\bf u})-{\bf u}\shuffle L({\bf u}) .$$

\end{proposition}
\begin{proof}
    Define $D(\Lcal):=\{\mathbf{x} \mapsto \langle \exp(\shuffle\mathbf{u}), \mathbf{x} \rangle \colon  \mathbf{u} \in \mathcal{U}\}$, $\mathbf{\mathcal{L}}: D(\Lcal)\to M(\Scal)$ as
$$
\mathcal{L}( \langle \exp(\shuffle\mathbf{u}), \fdot \rangle)(\mathbf{x})= \langle \exp(\shuffle\mathbf{u})\shuffle R(\u), \mathbf{x} \rangle,
$$
and note that by
Lemma~\ref{lem1}\ref{it11ii}
it is of $\Vcal$-polynomial type.  Observe that by Lemma~\ref{lem1}\ref{itlem3} and Lemma~\ref{lem1}\ref{it11ii} we get
$\langle \exp(\shuffle\mathbf{u})\shuffle R(\u), \mathbf{x} \rangle=\exp(\langle \u,\x\rangle)\langle  R(\u), \mathbf{x} \rangle,$
for each $\u\in 
\Ucal$ and $\x\in \Scal(S)$. We can thus conclude that $\X$ is a $\Vcal$-polynomial process solving the martingale problem for $\Lcal$.

For the second part,
define $D(\Lcal):=\{\mathbf{x} \mapsto \exp(\langle \mathbf{u}, \mathbf{x} \rangle) \colon  \mathbf{u} \in \mathcal{U}\}$, $\mathbf{\mathcal{L}}: D(\Lcal)\to M(\Scal)$ as
$
\mathcal{L} \exp(\langle \mathbf{u}, \fdot \rangle)(\mathbf{x})= \exp(\langle \mathbf{u}, \mathbf{x} \rangle) \langle R(\mathbf{u}), \mathbf{x} \rangle,
$
and note that it is of $\Ucal$-affine type. To prove that $\X$ is an $\Ucal$-affine process we verify that it solves the martingale problem for $\Lcal$. Properties~\ref{iti} and \ref{itii} are clear. For property~\ref{itiii}, we apply the carr\'e du champ formula, i.e.,
\[
d [N^f]_t= \mathcal{L}f^2(\mathbb{X}_t) - 2 f(\mathbb{X}_t) \mathcal{L}f(\mathbb{X}_t) dt
\]
to obtain the quadratic variation of $N^f$ given by \eqref{eqnNnew}, when $t \mapsto \mathbb{X}_t$ is continuous. Take now $f(\mathbf{x})=\langle \mathbf{u}, \mathbf{x} \rangle$, set $Y_t:=\langle{\bf u},\X_t\rangle$ and note that $[Y]_t=[N^f]$. As $\langle {\bf u},\X_t\rangle^2=\langle {\bf u}\shuffle {\bf u},\X_t\rangle$, the quadratic variation of $Y$ is thus given by
\begin{align*}
d[Y]_t&=\big(\langle L({\bf u}\shuffle {\bf u}),\X_t\rangle-2\langle {\bf u},\X_t\rangle\langle L({\bf u}),\X_t\rangle\big)dt\\
&=\big(\langle L({\bf u}\shuffle {\bf u})-2 {\bf u}\shuffle L({\bf u}),\X_t\rangle\big)dt.
\end{align*}
An application of It\^o's formula yields 
$$d\exp(Y_t)=\exp(\langle {\bf u},\X_t\rangle)\langle L({\bf u}),\X_t\rangle dt+\frac {\exp(\langle {\bf u},\X_t\rangle)} {2}d[Y]_t+d\text{(local martingale)},$$
which yields the claim.
\end{proof}

 We now introduce \emph{signature SDEs} as the following generic class of diffusion models with state space $S$  
driven by some $d$ dimensional Brownian motion $B$ with initial condition $X_0=x_0$. The corresponding dynamics are  given by 
\begin{align}\label{eq:SigSDE}
dX_{t}={b}(\mathbb{X}_t) dt + \sqrt{{a}(\mathbb{X}_t)}dB_t,  \quad X_0=x_0,
\end{align}
where $(\mathbb{X}_t)_{t\in[0,T]}$ denotes the signature of $X$.  The coefficients ${b}: { \Scal}(S) \to \mathbb{R}^{d}$ and ${a}: {\Scal}(S) \to \mathbb{S}_+^{d}$  are supposed to satisfy 
$b_i({\bf x}) =\langle \el \b i, {\bf x} \rangle$ and  
${a}_{ij}({\bf x}) =\langle \el \a {ij}, {\bf x}\rangle$,
where $\mathbb{S}_+^{d} \subset \R^{d\times d}$ denotes the subset of positive semidefinite matrices and $\el \b i, \el \a {ij} \in \Scal(S)^*$.
This means that we deal here with coefficients that are entire functions of the signature in the sense of
Definition~\ref{def:entire}. 
Observe also that by the universal approximation theorem, choosing $ b$ and $ a$ appropriately allows us to approximate any continuous path functional arbitrarily well so that we deal here with a truly general class of diffusions whose  coefficients can depend on the whole path.

We do not treat here the question of existence and uniqueness of solutions to these equations in full generality but rather suppose that $X$ exists and solves \eqref{eq:SigSDE} in an appropriate sense on an appropriate state space $S$. 
{However, in the important case addressed in Proposition~\ref{prop:existence}, global existence and uniqueness can be established by relying on classical results.

\begin{proposition}\label{prop:existence}
Let $(X_t)_{t\in [0,T]}$ be given by \eqref{eq:SigSDE}. Let $N \in \mathbb{N}$, $f_i$ and  $f_{ij}$ be entire bounded maps on $\mathbb{R}$ and suppose that the characteristics $b$ and $a$ are of the following form $$b_i({\bf x})=f_i\bigg( \sum_{|I| \leq N} \alpha_I \langle e_I, \x  \rangle\bigg)\qquad \text{ and } \qquad a_{ij}({\bf x})=f_{ij}\bigg( \sum_{|I| \leq N} \alpha_I \langle e_I, \x  \rangle\bigg) 
$$
for all $i,j \in \{1, \ldots, d\}$ such that $a(\bf x)$ is positive semidefinite for each $\x\in G^N(\mathbb{R}^d)$. Then the signature SDE \eqref{eq:SigSDE} admits a global weak solution.
Moreover, if $\sqrt{a (\bf x)}$ is locally Lipschitz with respect to the Euclidean norm on $T^N(\mathbb{R}^d)$, a unique strong solution exists.
\end{proposition}

\begin{proof}
Consider the SDE system for $(\mathbb{X}^{\leq N}_t)_{t\in [0,T]}$. Since the characteristics only depend on the signature up to level $N$,  we obtain by the definition of the signature and the Stratonovich integral (similarly as in the proof of Lemma \ref{lem:sigdyn}) a standard SDE in the signature components up to level $N$ whose components satisfy
\begin{align} \label{eq:dyn}
d \langle e_I, \mathbb{X}_t \rangle=\Big(\frac 1 2\langle e_{I''},\mathbb{X}_t\rangle a_{i_{|I|-1}i_{|I|}}(\X_t) 
+\langle e_{I'},\mathbb{X}_t\rangle b_{i_{|I|}}(\mathbb{X}_t)\Big) dt
+\sum_{j=1}^{d} \langle e_{I'},\mathbb{X}_t \rangle \sigma (\mathbb{X}_t)_{i_{|I|}j} dB_t^j,
\end{align}
where $\sigma(\x)= \sqrt{a (\x)}$.
By continuously extending the characteristics to $T^N(\mathbb{R}^d)$ such that $a(\bf x)$ remains  bounded and positive semidefinite, we obtain global existence of the SDE system in $T^N(\mathbb{R}^d)$ by standard results for weak solutions. Indeed, the  drift and diffusion coefficients of $\mathbb{X}^{\leq N}$ are continuous on $T^N(\mathbb{R}^d)$ and of linear growth in the components of $\mathbb{X}^{\leq N}$ due to the boundedness assumptions on $b$ and $a$. This then yields a solution to \eqref{eq:SigSDE}, however with the modified coefficients extended to $T^N(\mathbb{R}^d)$. But since the truncated signature of this solution for $X$ satisfies exactly \eqref{eq:dyn}, we necessarily get a solution in $G^N(\mathbb{R}^d)$. This proves the first claim. 

The second claim concerning strong existence and uniqueness follows from the local Lipschitz property of  $\sqrt{a}$ and $b$ (where the latter automatically holds since it is a composition of an entire map with a linear function), which  translates to the whole system.
\end{proof}}

{
\begin{remark}Notice that the existence of a weak solution to the martingale problem taking values in \(G^N(\mathbb{R}^d)\) can also be verified directly by checking the positive maximum principle on \(G^N(\mathbb{R}^d)\). Indeed, due to \eqref{eq:GN}, $G^N$
can be described via \[
G^N(\mathbb{R}^d) = \{\mathbf{x} \in T^N(\mathbb{R}^d) \colon p(\mathbf{x}) = 0 \ \text{for all } p \in \mathcal{Q}\}
\]
for a suitable set of polynomials \(\mathcal{Q}\). 
Therefore verifying the positive maximum principle
 reduces to showing that the signature generator maps each \(p \in \mathcal{Q}\) to a function vanishing on \(G^N(\mathbb{R}^d)\) and that the diffusion coefficient,  supposed to be positive semidefinite, vanishes on \(G^N(\mathbb{R}^d)\) once multiplied with $\nabla p$ (see Theorem~5.3 in \cite{FL:16}). \end{remark}}

Even though the very general dynamics of \eqref{eq:SigSDE} have at first sight nothing in common with classical affine processes in finite dimension, they are \emph{always} the projection of an affine process, namely their signature process $(\mathbb{X}_t)_{t\in[0,T]}$. Before stating this result let us start with the following lemmas. Recall that $I'$ and $I''$ have been introduced in \eqref{eqn18} and the subsequent line, respectively.
\begin{lemma} \label{lem:sigdyn}
Let $X$ be given by \eqref{eq:SigSDE} and fix some multi-index $I$. Then it holds that
\begin{align*}
d\langle  e_{I}, \mathbb{X}_t\rangle
&=\langle
e_{I'}\shuffle \el \b {i_{|I|}}
+ \frac 1 2e_{I''}\shuffle \el \a {i_{{|I|}-1}i_{|I|}}
,\mathbb{X}_t\rangle dt
+\sum_{j=1}^{d} \langle e_{I'},\mathbb{X}_t \rangle{ \sigma (\mathbb{X}_t)}_{i_{|I|}j} dB_t^j,
\end{align*}
where $\sigma({\bf x})=\sqrt {a({\bf x})}$.
\end{lemma}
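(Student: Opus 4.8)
The plan is to read off the dynamics of $\langle e_I,\mathbb{X}_t\rangle$ directly from the recursive Stratonovich definition in Definition~\ref{def2} and then convert it into It\^o form. First I would write the driver of \eqref{eq:SigSDE} in It\^o form as $dX_t^{k}=b_k(\mathbb{X}_t)\,dt+\sum_{j=1}^d\sigma(\mathbb{X}_t)_{kj}\,dB_t^j$ with $\sigma=\sqrt a$, so that the defining identity $d\langle e_I,\mathbb{X}_t\rangle=\langle e_{I'},\mathbb{X}_t\rangle\circ dX_t^{i_{|I|}}$ turns, after inserting the Stratonovich--It\^o correction, into
\[
d\langle e_I,\mathbb{X}_t\rangle=\langle e_{I'},\mathbb{X}_t\rangle\,dX_t^{i_{|I|}}+\tfrac12\,d[\langle e_{I'},\mathbb{X}\rangle,X^{i_{|I|}}]_t.
\]

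Next I would evaluate the covariation. Since a Stratonovich and the corresponding It\^o integral share the same local-martingale part, the martingale part of $\langle e_{I'},\mathbb{X}\rangle$ is read off from its own recursive definition as $\sum_{j=1}^d\langle e_{I''},\mathbb{X}_t\rangle\,\sigma(\mathbb{X}_t)_{i_{|I|-1}\,j}\,dB_t^j$ (for $|I|\ge 2$). Pairing this against the martingale part $\sum_{j'}\sigma(\mathbb{X}_t)_{i_{|I|}\,j'}\,dB_t^{j'}$ of $X^{i_{|I|}}$, and using $[B^j,B^{j'}]_t=\delta_{jj'}t$ together with $\sigma\sigma^\top=a$, I obtain
\[
d[\langle e_{I'},\mathbb{X}\rangle,X^{i_{|I|}}]_t=\langle e_{I''},\mathbb{X}_t\rangle\,a_{i_{|I|-1}\,i_{|I|}}(\mathbb{X}_t)\,dt.
\]
For $|I|=1$ the factor $\langle e_{I'},\mathbb{X}\rangle=\langle e_\emptyset,\mathbb{X}\rangle\equiv1$ is constant, the covariation vanishes, and the $a$-term is understood to be absent, consistently with the stated formula.

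Finally I would substitute the It\^o expansion of $dX_t^{i_{|I|}}$ and the covariation into the first display, producing the drift $\langle e_{I'},\mathbb{X}_t\rangle\,b_{i_{|I|}}(\mathbb{X}_t)+\tfrac12\langle e_{I''},\mathbb{X}_t\rangle\,a_{i_{|I|-1}\,i_{|I|}}(\mathbb{X}_t)$ and the diffusion $\sum_{j=1}^d\langle e_{I'},\mathbb{X}_t\rangle\,\sigma(\mathbb{X}_t)_{i_{|I|}\,j}\,dB_t^j$. Inserting the linear form of the coefficients, $b_k(\mathbb{X}_t)=\langle\el \b k,\mathbb{X}_t\rangle$ and $a_{k\ell}(\mathbb{X}_t)=\langle\el \a {k\ell},\mathbb{X}_t\rangle$, I would use the shuffle property (Theorem~\ref{th:shuffle}, in the form of Lemma~\ref{lem1}\ref{it11ii}) to collapse each product of two pairings into one, e.g.\ $\langle e_{I'},\mathbb{X}_t\rangle\langle\el \b {i_{|I|}},\mathbb{X}_t\rangle=\langle e_{I'}\shuffle\el \b {i_{|I|}},\mathbb{X}_t\rangle$. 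Collecting the two drift terms then yields exactly $\langle e_{I'}\shuffle\el \b {i_{|I|}}+\tfrac12\,e_{I''}\shuffle\el \a {i_{|I|-1}\,i_{|I|}},\mathbb{X}_t\rangle$.

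The hard part will be the careful bookkeeping of the Stratonovich--It\^o conversion: one must correctly identify the local-martingale part of the lower-level component $\langle e_{I'},\mathbb{X}\rangle$, which is itself defined recursively and carries the full path-dependent diffusion matrix, and verify that the correction couples only the two trailing letters $i_{|I|-1},i_{|I|}$, thereby producing precisely the $e_{I''}$-shifted term. That the resulting products of pairings again define legitimate elements of $\Scal(S)^*$ is guaranteed by the shuffle compatibility invoked in Lemma~\ref{lem1}\ref{it11ii}, and the degenerate cases $|I|\in\{0,1\}$ would be checked by hand.
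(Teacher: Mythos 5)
Your proposal is correct and follows essentially the same route as the paper: convert the defining Stratonovich integral to It\^o form, identify the correction term as $\tfrac12\langle e_{I''},\mathbb{X}_t\rangle\,d[X^{i_{|I|-1}},X^{i_{|I|}}]_t$, substitute the It\^o dynamics of $X$, and collapse the products of pairings via the shuffle property (Theorem~\ref{th:shuffle}). Your computation of the covariation through the explicit martingale parts and $\sigma\sigma^\top=a$ is just a slightly more spelled-out version of the step the paper states directly.
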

\begin{proof}
By definition of signature,  Stratonovich integral, and \eqref{eq:SigSDE}, the assertion follows by
\begin{align*}
d\langle  e_{I}, \mathbb{X}_t\rangle
&=\langle e_{I'},\mathbb{X}_t\rangle
\circ d X_t^{i_{|I|}}\\
&=\frac 1 2\langle e_{I''},\mathbb{X}_t\rangle d[X^{i_{|I|-1}},X^{i_{|I|}}]_t
 +\langle e_{I'},\mathbb{X}_t\rangle
 dX_t^{i_{|I|}}\\
&=\Big(\frac 1 2\langle e_{I''},\mathbb{X}_t\rangle \langle \el \a {i_{|I|-1}i_{|I|}}, \mathbb{X}_t\rangle 
+\langle e_{I'},\mathbb{X}_t\rangle \langle \el \b {i_{|I|}},\mathbb{X}_t\rangle\Big) dt
+\sum_{j=1}^{d} \langle e_{I'},\mathbb{X}_t \rangle \sigma (\mathbb{X}_t)_{i_{|I|}j} dB_t^j.
\end{align*}
 \end{proof}

\begin{remark}\label{rem:initialvalue}
    Observe that according to Definition~\ref{def2} the initial value of  $\X$ is given by $e_\emptyset$, as it corresponds to the signature of $X$ at time 0. This can be generalized considering the process $(\X^\x_t)_{t\in[0,T]}$ given by $\X_t^\x:=\x\otimes\X_t$ where $\X$ denotes the signature of the $S$-valued process $(X_t)_{t\in[0,T]}$ solving
    $$dX_{t}={b}(\X^\x_t) dt + {\sigma(\X^\x_t)}dB_t,  \quad X_0=x_0,$$
    for some $\x\in \Scal(S)$.
    Using that
    $\langle e_I,\X^\x_t\rangle=\sum_{e_{I_1}\otimes e_{I_2}=e_I}\langle e_{I_1},\x\rangle\langle e_{I_2}, \X_t\rangle$
    one can indeed show that $(\X^\x_t)_{t\in[0,T]}$ satisfies Lemma~\ref{lem:sigdyn}. This permits to see that $\X^\x$ solves the same martingale problem as $\X^{e_\emptyset}$ but with initial condition $\x$.
    
\end{remark}

\subsubsection{The affine case}\label{sec41}

We now show that the signature of signature SDEs as defined in \eqref{eq:SigSDE} is an $\Scal(S)$-valued affine process, for which we can express the map $R$ explicitly in terms of the coefficients of \eqref{eq:SigSDE}.
Recall that $\u\mapsto\u^{(1)}$ and $\u\mapsto\u^{(2)}$ denote the shifts defined in Section~\ref{sec22}.

\begin{theorem}\label{th:uniaffine}\phantomsection
\begin{enumerate}
\item\label{it7i} Fix $X$ as in \eqref{eq:SigSDE} and let $\X$ be its signature. Fix 
 $\Ucal\subseteq \Scal(S)^*$ such that  
 \begin{equation}\label{eqn27}
 \u^{(1)}\in(\Scal(S)^*)^{d}\qquad\text{and}\qquad\u^{(2)}\in (\Scal(S)^*)^{d\times d}
 \end{equation}
 for each $\u\in 
 \Ucal$ and consider the map $R:\mathcal{U} \to T((\R^d))$ given by
\begin{equation}\label{eq:R}
R(\mathbf u)={\bf b}^\top\shuffle {\bf u}^{(1)}+\frac 1 2 \tr\big({\bf a}\shuffle\big({\bf u}^{(2)}+{\bf u}^{(1)}\shuffle({\bf u}^{(1)})^\top\big)\big).
\end{equation}
Suppose  that 
$(\langle \u ,\X_t\rangle)_{t\in[0,T]}$ and $(\langle   R(\mathbf{u}), \X_t \rangle)_{t\in[0,T]}$   are continuous processes for all $\u\in \Ucal$ and 
$
(\sup_N|\langle \u^{\leq N},\X_t\rangle|)_{t\in[0,T]}
$ and $
(\sup_N|\langle  R(\mathbf{u}^{\leq N}), \X_t \rangle|)_{t\in[0,T]}$
are locally bounded. 
Then $\X$ is an $\Scal(S)$-valued $\Ucal$-affine process, whose linear operator is given by
 $$\mathcal{L} \exp(\langle \mathbf{u}, \fdot \rangle)(\mathbf{x}):= \exp(\langle \mathbf{u}, \mathbf{x} \rangle) \langle R(\mathbf{u}), \mathbf{x} \rangle.$$

\item\label{it7ii} If  the conditions of Theorem~\ref{thm:affinetrans2} are satisfied as well, then the affine transform formula
\[
\mathbb{E}[\exp(\langle \mathbf{u}, \mathbb{X}_T  \rangle)]= \exp(\langle {\bm \psi}(T), e_\emptyset  \rangle), 
\]
holds, 
where ${\bm \psi}$ is a  $\mathcal{U}$-valued solution of  the  Riccati ODE 
\begin{equation}\label{eqn111}
 {\bm \psi}(t)=\u+\int_0^tR({\bm \psi}(s))ds.
\end{equation}
\item\label{it7iii} If the conditions of Theorem~\ref{thm:affinetrans} are satisfied we also get
$
\mathbb{E}[\exp(\langle \mathbf{u}, \mathbb{X}_T \rangle) ]= v(T,\mathbf{u}),
$
where $v(t,\mathbf{u})$ is a solution to the following transport equation 
\begin{align*}
\partial_t v(t,\mathbf{u})= \mathcal{B} v(t,\mathbf{u})
 = \langle R(\mathbf{u}), \nabla_{\bf u} v(t,\mathbf{u}) \rangle, \quad v(0,{\bf u}) =\exp (\langle \mathbf{u},\mathbf{x}_0\rangle ), \quad t \in [0,T].
\end{align*}

\end{enumerate}
\end{theorem}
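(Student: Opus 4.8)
The plan is to prove part \ref{it7i} — that $\X$ is an $\Scal(S)$-valued $\Ucal$-affine process with generator determined by the stated $R$ — and then to read off parts \ref{it7ii} and \ref{it7iii} as immediate applications of Theorem~\ref{thm:affinetrans2} and Theorem~\ref{thm:affinetrans} to this process with initial value $e_\emptyset$ (cf.~Remark~\ref{rem:initialvalue}). For \ref{it7i} I must verify that $\mathcal L$ is of $\Ucal$-affine type in the sense of Definition~\ref{def:primalaffine} and that $\X$ solves the associated martingale problem. That $R(\u)\in\Scal(S)^*$ for $\u\in\Ucal$ follows from hypothesis~\eqref{eqn27}: $R(\u)$ is assembled from $\b^i,\a^{ij}\in\Scal(S)^*$ and the shifts $\u^{(1)},\u^{(2)}$ purely through the shuffle product, and $\Scal(S)^*$ is closed under shuffling by Lemma~\ref{lem1}\ref{it11ii}. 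The initial condition (property~\ref{iti}) and the adaptedness/c\`adl\`ag requirement (property~\ref{itii}) are supplied by Remark~\ref{rem:initialvalue} together with the assumed continuity of $(\langle\u,\X_t\rangle)_t$ and $(\langle R(\u),\X_t\rangle)_t$.

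The core of the argument is the drift identity needed for property~\ref{itiii}, which I would first establish at the truncated level. Fixing $N$, I apply the classical finite-dimensional It\^o formula to $\exp(\langle\u^{\leq N},\X_t\rangle)$, a smooth function of the finitely many components $(\langle e_I,\X_t\rangle)_{|I|\leq N}$. Inserting the component dynamics from Lemma~\ref{lem:sigdyn} and regrouping the first-order drift by the last, respectively last two, letters of each word makes the shifts $\u^{(1)},\u^{(2)}$ appear exactly, giving the contribution $\langle \b^\top\shuffle(\u^{\leq N})^{(1)}+\tfrac12\tr(\a\shuffle(\u^{\leq N})^{(2)}),\X_t\rangle$. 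The It\^o second-order term is $\tfrac12$ times the quadratic variation of $\langle\u^{\leq N},\X_t\rangle$; writing its diffusion coefficient through $\sigma=\sqrt a$, using $\sigma\sigma^\top=a$, and collapsing the products via the shuffle identity of Theorem~\ref{th:shuffle} turns it into $\tfrac12\langle\tr(\a\shuffle(\u^{\leq N})^{(1)}\shuffle((\u^{\leq N})^{(1)})^\top),\X_t\rangle$. Adding the two contributions reproduces precisely $\langle R(\u^{\leq N}),\X_t\rangle$, so that $M^{\u^{\leq N}}_t:=\exp(\langle\u^{\leq N},\X_t\rangle)-\exp(\langle\u^{\leq N},e_\emptyset\rangle)-\int_0^t\exp(\langle\u^{\leq N},\X_s\rangle)\langle R(\u^{\leq N}),\X_s\rangle\,ds$ is a local martingale for every $N$.

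The remaining step, and the one I expect to be the main obstacle, is the passage $N\to\infty$ while preserving the local martingale property. Pointwise, $\langle\u^{\leq N},\X_t\rangle\to\langle\u,\X_t\rangle$ by the definition of the pairing in \eqref{eq:uxana}; and since the coefficient of each fixed word in $R(\u^{\leq N})$ stabilizes to that of $R(\u)$ once $N$ exceeds its length, one also obtains $\langle R(\u^{\leq N}),\X_t\rangle\to\langle R(\u),\X_t\rangle$, the tails being controlled through $R(\u)\in\Scal(S)^*$. The two local boundedness hypotheses — on $\sup_N|\langle\u^{\leq N},\X_t\rangle|$ and on $\sup_N|\langle R(\u^{\leq N}),\X_t\rangle|$ — then provide a common localizing sequence and an integrable dominating function, so that dominated convergence handles the $ds$-integral and the localized $M^{\u^{\leq N}}$ converge to $M^{\u}_t=\exp(\langle\u,\X_t\rangle)-\exp(\langle\u,e_\emptyset\rangle)-\int_0^t\exp(\langle\u,\X_s\rangle)\langle R(\u),\X_s\rangle\,ds$, which is thus again a local martingale. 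This verifies property~\ref{itiii} and completes \ref{it7i}; the algebraic bookkeeping identifying $R$ is routine once Lemma~\ref{lem:sigdyn} and the shuffle property are in hand, whereas securing both the drift convergence and the stability of the martingale property under the truncation limit is the delicate point. With \ref{it7i} in place, parts \ref{it7ii} and \ref{it7iii} follow directly by invoking Theorem~\ref{thm:affinetrans2} and Theorem~\ref{thm:affinetrans} for the affine process $\X$.
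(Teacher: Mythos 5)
Your overall route coincides with the paper's: prove part~\ref{it7i} by verifying the martingale problem, obtain the local martingale property first for the truncations $\u^{\leq N}\in T(\R^d)$ via It\^o's formula and Lemma~\ref{lem:sigdyn}, localize with the two assumed sup-conditions, pass to the limit by dominated convergence, and then read off parts~\ref{it7ii} and~\ref{it7iii} from Theorem~\ref{thm:affinetrans2} and Theorem~\ref{thm:affinetrans}. The identification of $R$ by regrouping the drift according to the last one or two letters and collapsing the quadratic-variation term via $\sigma\sigma^\top=a$ and Theorem~\ref{th:shuffle} is exactly the computation behind Lemma~\ref{lem:sigdyn} and \eqref{eq:R}.

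The one step that does not go through as you justify it is the convergence $\langle R(\u^{\leq N}),\X_t\rangle\to\langle R(\u),\X_t\rangle$. Coefficient-wise stabilization of $R(\u^{\leq N})$ together with $R(\u)\in\Scal(S)^*$ is not sufficient: the pairing \eqref{eq:uxana} is a limit of level-wise partial sums, so interchanging the limit in $N$ with the sum over levels requires tail control that is \emph{uniform in $N$}; the membership $R(\u)\in\Scal(S)^*$ only controls the tail of the limit object, and the assumed local boundedness of $\sup_N|\langle R(\u^{\leq N}),\X_t\rangle|$ gives boundedness but not convergence. The paper closes this gap differently: using Lemma~\ref{lem1}\ref{it11ii} it factors
\[
\langle R(\u^{\leq N}),\x\rangle=\langle\b,\x\rangle^\top\langle(\u^{\leq N})^{(1)},\x\rangle+\tfrac12\tr\Big(\langle\a,\x\rangle\big(\langle(\u^{\leq N})^{(2)},\x\rangle+\langle(\u^{\leq N})^{(1)},\x\rangle\langle(\u^{\leq N})^{(1)},\x\rangle^\top\big)\Big),
\]
and then observes that $\langle(\u^{\leq N})^{(1)},\x\rangle=\sum_{n=0}^{N-1}\langle\pi_n(\u^{(1)}),\pi_n(\x)\rangle\to\langle\u^{(1)},\x\rangle$, and analogously for the second shift; this is precisely the role of hypothesis \eqref{eqn27}, which your argument never actually invokes at this point. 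With that replacement, your localization and dominated-convergence step completes the proof exactly as in the paper.
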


\begin{proof} 
Observe first that since $\Scal(S)^*$ is a linear set which is closed with respect to the shuffle product the pairing $\langle R(\u),\x\rangle$ is well defined for each $\u\in \Ucal$ and $\x\in \Scal(S)$.
We just prove part \ref{it7i}, since part \ref{it7ii} and part \ref{it7iii} then follow from Theorem~\ref{thm:affinetrans2} and Theorem~\ref{thm:affinetrans}, respectively. 
Since $\Lcal$ is of $\Ucal$-affine type, to conclude we just need to verify that $\X$ is a solution to the corresponding martingale problem. Properties~\ref{iti} and \ref{itii}  of the corresponding definition follow by assumption. 
 It\^o's formula and Lemma~\ref{lem:sigdyn} yield that for each ${\bf u}\in T(\R^d)$ the process $M^\u$ given by 
 \begin{equation}\label{eqn19}
dM_t^\u:=d\exp(\langle \mathbf{u}, \mathbb{X}_t \rangle)- \mathcal{L}\exp(\langle \mathbf{u}, \fdot \rangle)(\mathbb{X}_t) dt,\qquad M_0^{\u}=0
\end{equation}
is a local martingale and  property \ref{itiii} holds for each ${\bf u}\in T(\R^d)$.  In order to show that it holds on the whole $\Ucal$ we fix ${\bf u}\in \Ucal$ and let 
 $(\tau^n_{\bf u})_{n\in \N}$ be the localizing sequence making 
 $$
(\sup_N|\langle \u^{\leq N},\X_{t\land\tau_n}\rangle|)_{t\in[0,T]}
\qquad\text{and}\qquad
(\sup_N|\langle  R(\mathbf{u}^{\leq N}), \X_{t\land\tau_n} \rangle|)_{t\in[0,T]}$$
bounded. Fix $\x\in \Scal(S)$ and observe that \eqref{eqn27} yields
$$\lim_{N\to\infty}\langle (\u^{\leq N})^{(1)},\x\rangle=\lim_{N\to\infty}\sum_{n=0}^{N-1}\langle \pi_n(\u^{(1)}),\pi_n(\x)\rangle=\langle \u^{(1)},\x\rangle$$ 
and similarly $\lim_{N\to\infty}\langle (\u^{\leq N})^{(2)},\x\rangle=\langle \u^{(2)},\x\rangle$. Since $\langle R(\u^{\leq N}),\x\rangle$ can be written as
\begin{align*}
\langle{\bf b},\x\rangle^\top\langle (\u^{\leq N})^{(1)},\x\rangle+\frac 1 2 \tr\big(\langle{\bf a},\x\rangle\big(\langle{(\u^{\leq N})}^{(2)},\x\rangle+\langle{(\u^{\leq N})}^{(1)},\x\rangle\langle{(\u^{\leq N})}^{(1)},\x\rangle^\top\big)\big)
\end{align*}
this implies that $\lim_{N\to\infty}\langle R(\u^{\leq N}),\x\rangle=\langle R(\u),\x\rangle$, showing that 
$M^{\u^{\leq N}}_{\fdot\land \tau_n}$
converges to
$
M^\u_{\fdot\land \tau_n}$
in the bounded pointwise sense.
By the dominated convergence theorem we can conclude that $(M^\u_{t\land \tau_n})_{t\in[0,T]}$
is a bounded martingale proving the claim.
\end{proof}

 \begin{remark}
 The operator $R$ given by \eqref{eq:R} explicitly reads:
$$
R(\mathbf{u})= \sum_{|I|\geq0}
\Big(
e_{I'}\shuffle \el \b {i_{|I|}} 
+\frac 1 2 e_{I''}\shuffle \el \a {i_{|I|-1}i_{|I|}}
\Big){\bf u}_I+\frac 1 2 \sum_{|I|,|J|\geq0}\Big( e_{I'}\shuffle e_{J'}\shuffle \el \a {i_{|I|}i_{|J|}} \Big)
{\bf u}_I {\bf u}_J.
$$
Observe that by linearity of $\Scal(S)^*$ and Lemma~\ref{lem1}\ref{it11ii} we have that $R({\bf u})\in \Scal(S)^*$ for each ${\bf u}\in T(\R^d)$. 

Note that the condition that
$(\langle \u ,\X_t\rangle)_{t\in[0,T]}$ and $(\langle   R(\mathbf{u}), \X_t \rangle)_{t\in[0,T]}$ are continuous processes does not follow from the continuity of $ t \mapsto \langle e_I, \mathbb{X}_t \rangle$, as we do not necessarily have uniform convergence of $\x \mapsto \langle \u , \x \rangle$. In a follow up paper (see \cite{CPST:24}) we provide sufficient conditions to obtain locally uniform convergence everywhere.

Observe also that the local boundedness condition is satisfied if the processes
$(|\u|_{\X_t})_{t\in[0,T]}$, $(b(\X_t))_{t\in[0,T]}$, $(|\u^{(1)}|_{\X_t})_{t\in[0,T]}$, $(a(\X_t))_{t\in[0,T]}$, and $(|\u^{(2)}|_{\X_t})_{t\in[0,T]}$ 
are locally bounded (e.g.~if they are continuous) for each $\u\in \Ucal$. 
Inspecting the proof we can see that the same condition can be replaced by requiring the process $M^\u$ defined in \eqref{eqn19} to be a local 
martingale for each $\u\in \Ucal$. In view of Remark~\ref{rem2new} it could even be convenient to directly assume $M^\u$ to be a true martingale.

Finally, note that if we considered the process $\mathbb{X}^{\x}=\x \otimes \mathbb{X}$ as in 
Remark~\ref{rem:initialvalue}, the affine transform formula would read as
$
\mathbb{E}[\exp(\langle \mathbf{u}, \mathbb{X}_T^\x  \rangle)]= \exp(\langle {\bm \psi}(T), \x  \rangle).
$
 \end{remark}

The next corollary provides a generic methodology to obtain expressions 
 for the cumulant generating function of $X_T$  as defined in \eqref{eq:SigSDE}, whenever 
the corresponding Riccati ODE can be solved. Its proof is based on the observation that $u^\top X_T=u^\top x_0 +\langle u,\X_T\rangle$ 
for each $u\in \R^d$.

\begin{corollary}
Suppose that $X$ satisfies the conditions of Theorem~\ref{th:uniaffine}  for $X_0=x_0$  and some $\Ucal\subseteq \Scal(S)^*$. Fix $u\in \R^d$ such that 
$(u^\top x_0)e_\emptyset +u\in \Ucal.$
Then
$
\mathbb{E}[\exp( u^\top X_T)]= \exp({\langle \bm \psi}(T), e_{\emptyset}\rangle )
$
where ${\bm \psi}$ is an $\mathcal{U}$-valued  solution of the corresponding Riccati ODE \eqref{eqn111} with initial condition ${\bm \psi}(0)=(u^\top x_0)e_\emptyset +u$.
\end{corollary}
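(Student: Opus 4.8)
The plan is to reduce the statement to the affine transform formula of Theorem~\ref{th:uniaffine}\ref{it7ii} by rewriting $u^\top X_T$ as the pairing of the signature $\mathbb{X}_T$ with a suitable dual element of $\Ucal$. First I would establish the announced identity $u^\top X_T = u^\top x_0 + \langle u, \mathbb{X}_T\rangle$. Viewing $u=\sum_{i=1}^d u_i e_i$ as a level-one element of $T((\mathbb{R}^d))$ (which trivially lies in $\Scal(S)^*$, having only finitely many nonzero components), linearity of the pairing gives $\langle u,\mathbb{X}_T\rangle=\sum_{i=1}^d u_i\langle e_i,\mathbb{X}_T\rangle$. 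By Definition~\ref{def2} together with $\langle e_\emptyset,\mathbb{X}_r\rangle=1$, the first-level components of the signature are $\langle e_i,\mathbb{X}_T\rangle=\int_0^T \langle e_\emptyset,\mathbb{X}_r\rangle\circ \mathrm{d}X_r^{i}=X_T^i-x_0^i$, so that $\langle u,\mathbb{X}_T\rangle=u^\top X_T-u^\top x_0$, which is the claimed identity.

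Next I would set $\mathbf{u}:=(u^\top x_0)e_\emptyset+u$, which belongs to $\Ucal$ by hypothesis. Since $\mathbb{X}_T\in\Scal(S)$ satisfies $\langle e_\emptyset,\mathbb{X}_T\rangle=\co{(\mathbb{X}_T)}{\emptyset}=1$, the linearity of the pairing recorded in Remark~\ref{rem4} yields
$$
\langle \mathbf{u},\mathbb{X}_T\rangle=(u^\top x_0)\langle e_\emptyset,\mathbb{X}_T\rangle+\langle u,\mathbb{X}_T\rangle=u^\top x_0+\big(u^\top X_T-u^\top x_0\big)=u^\top X_T
$$
almost surely, and hence $\mathbb{E}[\exp(u^\top X_T)]=\mathbb{E}[\exp(\langle\mathbf{u},\mathbb{X}_T\rangle)]$.

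Finally I would invoke Theorem~\ref{th:uniaffine}\ref{it7ii} for this particular $\mathbf{u}$: under the stated hypotheses the affine transform formula gives $\mathbb{E}[\exp(\langle\mathbf{u},\mathbb{X}_T\rangle)]=\exp(\langle\bm\psi(T),e_\emptyset\rangle)$, where $\bm\psi$ is the $\Ucal$-valued solution of the Riccati ODE~\eqref{eqn111} started at $\bm\psi(0)=\mathbf{u}=(u^\top x_0)e_\emptyset+u$. Combining this with the previous display proves the claim. There is no genuine obstacle here, since the corollary is a direct specialization of the affine transform formula; the only point requiring care is to read the hypotheses so that the integrability conditions of Theorem~\ref{thm:affinetrans2} underlying Theorem~\ref{th:uniaffine}\ref{it7ii} are assumed to hold for the shifted datum $\mathbf{u}=(u^\top x_0)e_\emptyset+u$ (and the associated Riccati solution), rather than merely for $u$, so that the formula is legitimately applicable.
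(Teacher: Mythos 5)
Your proof is correct and follows exactly the route the paper indicates: it rests on the observation that $u^\top X_T = u^\top x_0 + \langle u, \mathbb{X}_T\rangle$, so that $\langle (u^\top x_0)e_\emptyset + u, \mathbb{X}_T\rangle = u^\top X_T$ via $\langle e_\emptyset,\mathbb{X}_T\rangle=1$, after which Theorem~\ref{th:uniaffine}\ref{it7ii} applies directly. Your careful verification of the level-one signature identity and the remark about reading the integrability hypotheses for the shifted datum are both consistent with (and slightly more explicit than) the paper's one-line argument.
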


 \begin{example}
We show now how the presented techniques can be used to provide an alternative construction of the joint characteristic function of a two dimensional OU process
$$dX_t=\sigma dW_t, \quad X_0=x_0$$
 and its L\'evy area $L_t:=\frac 1 2 \int_0^t X^2_sdX^1_s-X^1_sdX^2_s$. 
 Here, $W$ denotes a standard Brownian motion, and $\sigma \in \mathbb{R}^{2 \times 2}$.
 This classical result is originally due to \cite{L:50} and was then generalized and studied by several authors, see e.g.~\cite{HS:83} and the recent paper \cite{lyons2024pde} as well as the references therein.
First observe that $X$ satisfies \eqref{eq:SigSDE} for
$
a(X_t)_{ij}=  a_{ij}=\langle a_{ij}e_\emptyset,\X_t\rangle,
$
where $a:=\sigma\sigma^\top$.
Set then
$\Ucal:=T^2(\R^2)+iT^2(\R^2)$
and observe that $\Ucal$ can be identified  with $\R^7+i\R^7$.
Next, note that the operator $R$ of \eqref{eq:R} can be rewritten as
\begin{equation*}
R(\mathbf{u})=
 \sum_{i_1i_2=1}^2
\frac 1 2  a_{i_{1}{i_{2}}} {\bf u}_{(i_1i_2)}e_\emptyset+\sum_{i_1i_2j_1j_2=1}^2a_{i_{2}{j_{2}}}
{\bf u}_{(i_1i_2)} {\bf u}_{(j_1j_2)}( e_{i_1}\otimes e_{j_1})
+\sum_{i_1j_1=1}^2 \frac 1 2a_{i_{1}{j_{1}}}
{\bf u}_{(i_1)} {\bf u}_{(j_1)}e_\emptyset,
\end{equation*}
and thus maps  $\Ucal$ to itself. By Theorem~\ref{th:uniaffine} we know that the  signature process $(\mathbb{X}_t)_{t\in[0,T]}$
of $X$ is an $\Ucal$-affine process taking values in $T((\mathbb{R}^d))$. In particular, letting ${\bm \psi}$ be a solution to the finite dimensional Riccati ODE
\[
\partial_t {\bm \psi}(t) = R({\bm \psi}(t)), \quad {\bm \psi}(0)=i\frac 1 2 \lambda(e_2\otimes e_1-e_1\otimes e_2)+\text{i}\gamma_1e_1+\text{i}\gamma_2e_2
\]
we get that
$\E[\exp(\text{i}\lambda L_t+\text{i} \gamma^\top X_t)]=\exp(\langle{\bm \psi}(T), e_{\emptyset}\rangle)\exp(\text{i}\gamma^\top x_0).$
Note that if $\sigma = \operatorname{Id}_2$, one can also consider the $4$-dimensional affine process $(X=(X^1,X^2), L, (X^1)^2 + (X^2)^2)$ to obtain the result.

The result generalizes to general $n$-dimensional OU processes
$dX_t=\theta (\mu-X_t) dt+\sigma dW_t.$
Also in this case one can indeed verify that setting $\Ucal:=T^2(\R^n)+iT^2(\R^n)$ we get that $R(\Ucal)\subseteq \Ucal$, for the operator $R$ given by \eqref{eq:R}. This in particular yields that the characteristic function of the level two signature process $\X^{\leq 2}$  of $X$ can be computed by solving a finite dimensional Riccati ODE.

 \end{example}

 \subsubsection{The polynomial case}
While Theorem~\ref{th:uniaffine} uses affine technology to compute $\mathbb{E}[\exp(\langle \mathbf{u}, \mathbb{X}_T  \rangle)]$ for generic diffusions of form \eqref{eq:SigSDE}, the following theorem establishes a formula for the \emph{expected signature} relying on the moment formula used for polynomial processes. Recall again that $\u\mapsto\u^{(1)}$ and $\u\mapsto\u^{(2)}$ denote the shifts defined in Section~\ref{sec22}.

\begin{theorem} \label{eq:expectedsig}
\begin{enumerate}
\item Fix $X$ as in \eqref{eq:SigSDE} and let $\X$ be its signature.
Fix 
 $\Ucal\subseteq \Scal(S)^*$ with  
 \begin{equation}\label{eqn23}
 \u^{(1)}\in(\Scal(S)^*)^{d}\qquad\text{and}\qquad\u^{(2)}\in (\Scal(S)^*)^{d\times d}
 \end{equation}
 for each $\u\in 
 \Ucal$ and consider the map $L:\mathcal{U} \to T((\R^d))$ given by
\begin{equation}\label{eq:L}
L(\mathbf u)={\bf b}^\top\shuffle {\bf u}^{(1)}+\frac 1 2 \tr({\bf a}\shuffle{\bf u}^{(2)}).
\end{equation}
Suppose also that 
$(\langle \u,\X_t\rangle)_{t\in[0,T]}$ and $
(\langle L(\u),\X_t\rangle)_{t\in[0,T]}$
are continuous processes for all $\u\in \Ucal$ and
$(\sup_N|\langle \u^{\leq N},\X_t\rangle|)_{t\in[0,T]}$ and $
(\sup_N|\langle L(\u^{\leq N}),\X_t\rangle|)_{t\in[0,T]}$
are locally bounded.
Then $\X$ is an $\Scal(S)$-valued $\Ucal$-polynomial process with respect to $\Ucal$, whose linear operator is given by
 $\mathcal{L} \langle \mathbf{u}, \fdot \rangle(\mathbf{x}):= \langle L(\mathbf{u}), \mathbf{x} \rangle$.
\item If  the conditions of Theorem~\ref{thm:momentpoly} are satisfied as well, then the moment formula
\[
\mathbb{E}[\langle \mathbf{u}, \mathbb{X}_T \rangle ]= \langle \mathbf{c}(T) , e_\emptyset \rangle,
\]
holds true, 
where ${\bf c}$ is a $\mathcal{U}$-valued  solution of  the  linear ODE 
$$
 \mathbf{c}(t) =\u+\int_0^t L(\mathbf{c}(s))ds.
$$

\end{enumerate}
\end{theorem}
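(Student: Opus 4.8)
The plan is to mirror the proof of Theorem~\ref{th:uniaffine}, replacing the exponential test functions of the affine case by the linear functionals $\x \mapsto \langle \u, \x\rangle$ that govern the polynomial setting; part~2 will then be an immediate application of Theorem~\ref{thm:momentpoly}. First I would record that $\langle L(\u), \x\rangle$ is well defined for every $\u \in \Ucal$ and $\x \in \Scal(S)$: since $\Scal(S)^*$ is a linear space closed under the shuffle product (Lemma~\ref{lem1}\ref{it11ii}), and $\mathbf{b}^i, \mathbf{a}^{ij} \in \Scal(S)^*$ while $\u^{(1)}, \u^{(2)}$ lie in the appropriate products of $\Scal(S)^*$ by \eqref{eqn23}, we have $L(\u) \in \Scal(S)^*$. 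Hence $\Lcal\langle \u, \fdot\rangle(\x) = \langle L(\u), \x\rangle$ genuinely defines an operator of $\Ucal$-polynomial type in the sense of Definition~\ref{def:primalpoly}.

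For part~1 the substance is to verify that $\X$ solves the associated martingale problem. Properties~\ref{iti} and \ref{itii} hold by assumption, with initial value $e_\emptyset$ since $\X$ is a signature, so only property~\ref{itiii} requires work. For finitely supported $\u \in T(\R^d)$ I would sum the dynamics of Lemma~\ref{lem:sigdyn} over the (finite) support of $\u$; the key algebraic point is that the drift contributions $e_{I'}\shuffle \mathbf{b}^{i_{|I|}} + \tfrac12 e_{I''}\shuffle \mathbf{a}^{i_{|I|-1}i_{|I|}}$, summed against $\u_I$, are exactly what the two shifts assemble into $L(\u) = \mathbf{b}^\top \shuffle \u^{(1)} + \tfrac12 \tr(\mathbf{a}\shuffle \u^{(2)})$. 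This yields directly that $N^\u_t = \langle \u, \X_t\rangle - \langle \u, e_\emptyset\rangle - \int_0^t \langle L(\u), \X_s\rangle\, ds$ is a local martingale for every $\u \in T(\R^d)$; note that, unlike the affine case, no It\^o/exponential step is needed because the dynamics are already linear in $\X$.

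The main obstacle is the passage from finitely supported $\u$ to general $\u \in \Ucal$. Here I would truncate and, for each fixed $\x \in \Scal(S)$, first establish pointwise convergence $\langle L(\u^{\leq N}), \x\rangle \to \langle L(\u), \x\rangle$: using Lemma~\ref{lem1}\ref{it11ii} one rewrites $\langle L(\u^{\leq N}), \x\rangle = \langle \mathbf{b}, \x\rangle^\top \langle (\u^{\leq N})^{(1)}, \x\rangle + \tfrac12 \tr\big(\langle \mathbf{a}, \x\rangle \langle (\u^{\leq N})^{(2)}, \x\rangle\big)$, and \eqref{eqn23} gives $\langle (\u^{\leq N})^{(1)}, \x\rangle \to \langle \u^{(1)}, \x\rangle$ and $\langle (\u^{\leq N})^{(2)}, \x\rangle \to \langle \u^{(2)}, \x\rangle$, whence the limit is $\langle L(\u), \x\rangle$. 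Choosing then the localizing sequence $(\tau_n)$ that makes $\sup_N |\langle \u^{\leq N}, \X_{t\wedge \tau_n}\rangle|$ and $\sup_N |\langle L(\u^{\leq N}), \X_{t\wedge\tau_n}\rangle|$ bounded, the dominated convergence theorem shows $N^{\u^{\leq N}}_{\fdot\wedge\tau_n} \to N^\u_{\fdot\wedge\tau_n}$ in the bounded-pointwise sense, so $N^\u_{\fdot\wedge\tau_n}$ is a bounded martingale and $N^\u$ a local martingale for all $\u \in \Ucal$; this proves that $\X$ is an $\Scal(S)$-valued $\Ucal$-polynomial process. Finally, part~2 is obtained by applying Theorem~\ref{thm:momentpoly} with initial value $e_\emptyset$: the stated linear ODE is precisely \eqref{eq:linODE} for this $L$, so under its hypotheses the moment formula $\mathbb{E}[\langle \u, \X_T\rangle] = \langle \mathbf{c}(T), e_\emptyset\rangle$ follows at once.
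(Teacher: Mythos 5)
Your proposal is correct and matches the paper's approach: the paper's own proof of this theorem is literally the one-line remark that it follows the proof of Theorem~\ref{th:uniaffine} step by step, and your argument is exactly that adaptation (well-definedness of $L(\u)$ via closedness of $\Scal(S)^*$ under shuffle, the local martingale property for finitely supported $\u$ from Lemma~\ref{lem:sigdyn} without the It\^o/exponential step, the truncation-plus-dominated-convergence passage to general $\u\in\Ucal$ using \eqref{eqn23} and the localizing sequence, and Theorem~\ref{thm:momentpoly} for part~2). No gaps.
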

\begin{proof}
The proof follows the proof of Theorem~\ref{th:uniaffine} step by step.
\end{proof}

\begin{remark}
 The operator $L$ given by \eqref{eq:L} explicitly reads:
\begin{equation*}
L \mathbf{u}= \sum_{|I|\geq0} (\frac 1 2 e_{I''}\shuffle \el \a {i_{|I|-1}i_{|I|}}
+e_{I'}\shuffle \el \b {i_{|I|}}){\mathbf{u}}_I.
\end{equation*}
Observe that by linearity of $\Scal(S)^*$ and Lemma~\ref{lem1}\ref{it11ii} we have that $L({\bf u})\subseteq \Scal(S)^*$ for each ${\bf u}\in T(\R^d)$.
 Inspecting the proof we can see again that the local boundedness condition can be replaced by requiring
$M_t^\u:=\langle \u ,\X_t\rangle
-\langle \u,\X_0\rangle
-\int_0^t\langle L(\mathbf{u}), \X_s \rangle ds$
to be a local martingale for each $\u\in \Ucal$. In view of Remark~\ref{rem2new} it could even be convenient to directly assume $M^\u$ to be a true martingale.
Observe also that if we considered the process $\mathbb{X}^{\x}=\x \otimes \mathbb{X}$ as in 
Remark~\ref{rem:initialvalue}, the moment formula would read as
$
\mathbb{E}[(\langle \mathbf{u}, \mathbb{X}_T^\x  \rangle)]= (\langle {\bf c}(T), \x  \rangle).$
 \end{remark}

\begin{example}\label{ex4}
Suppose now that $a_{ij}({\bf x})$ and $b_i({\bf x})$ do not depend on ${\bf x}_I$ for $|I|\geq 3$ and $|I|\geq 2$, respectively. Then the restriction of $L$ to $T^N(\R^d)$ satisfies $L(T^N(\R^d))\subseteq T^N(\R^d)$ for each $N\in \N$. This implies that the solution of \eqref{eq:linODE} can be written as a finite dimensional matrix exponential. Hence, provided enough integrability, the expectation of the corresponding truncated signature $\X^{\leq N}$ can also be computed via a finite dimensional matrix exponential. As a special case of this specification we have classical polynomial processes in the sense of \cite{CKT:12} or \cite{FL:16}. In this case  $a_{ij}$ is additionally supposed to be a quadratic function in the original state variables, i.e. in the signature truncated at level 1. This restriction  yields additional integrability properties which are very helpful for the application of Theorem~\ref{eq:expectedsig}. We refer to Section~4 in \cite{CGMS:23} for a complete analysis of this case.

As a specific example consider $\widehat S_t:=(t,S_t)$ where $S$ follows a Black-Scholes model under the risk neutral measure, i.e.~$dS_t=\sigma S_t dW_t$ for a Brownian motion $W$ and some $\sigma>0$. Then   $\widehat S$ satisfies \eqref{eq:SigSDE} for $b_i=1_{\{i=1\}}$ and
$a({\bf x})_{ij}=\langle\sigma^2(2e_2\otimes e_2+2S_0e_2+S_0^2),{\bf x}\rangle1_{\{i=j=2\}}.$
The corresponding operator $L$ is given by
\begin{align*}
L e_I&= 
\frac {\sigma^2} 2 \Big(2e_{I''}\shuffle (e_2\otimes e_2)
+2S_0e_{I''}\shuffle e_2
+S_0^2e_{I''}\Big)1_{\{i_{|I|}=i_{|I|-1}=2\}}
+e_{I'}1_{\{i_{|I|}=1\}}.
\end{align*}
By Remark~\ref{rem1}\ref{it6ii}, we can immediately see that $L$ is indeed mapping $T^N(\R^d)$ to itself and can thus be represented as a matrix $G$. The truncated expected signature can now be written in terms of $e^{tG}$ as in the classical setting. 
\end{example}

\subsubsection{Combining the approaches}
Working with affine processes we express semigroups by means of a solution of a Riccati ODE. This solution needs to be paired with $\x$ and composed with an exponential map.  In Section~\ref{sec41} these operations have been performed in this order leading to representations of the form $\exp(\langle \bm\psi(t),\x\rangle)$. The algebraic structure on $\Scal(S)$ permits however to gain generality by inverting the order and working with representations of the form
$\langle \exp(\shuffle \bm\psi(t)),\x\rangle.$

\begin{example}
Fix $\x\in\Scal(\R)$ and note that 
$$\x=\sum_{k=0}^\infty\frac{x^k}{k!}e_1^{\otimes k} =(1,x,\frac {x^2}{2!},\frac{x^3}{3!}\ldots),$$
for some $x\in \R$.
Suppose that $\u=\text{i}e_1$ and note that
$\exp(\langle \u,\x\rangle)=\exp(\text{i}x)$
and
$$\exp(\shuffle \u)=\sum_{k=0}^\infty \frac{\u^{\shuffle k}}{k!}
=\sum_{k=0}^\infty \text{i}^k \frac{e_1^{\shuffle k}}{k!}
=\sum_{k=0}^\infty  \text{i}^k {e_1^{\otimes k}}
=(1, \text{i},  \text{i}^2, \text{i}^3,\ldots).
$$
 We thus obtain that
$\langle \exp(\shuffle \u),\x\rangle
=\sum_{k=0}^\infty \text{i}^k\frac{x^k}{k!}{=\exp(\text{i}x)=\exp(\langle \u,\x\rangle).}$

Next, consider {$\u=(0,1,-1,2!,-3!,4!,\ldots)$} and note that $$|\u|_\x=\sum_{k=1}^\infty (k-1)! \frac {|x|^k}{k!}=\sum_{k=1}^\infty  \frac {|x|^k}{k}=\infty$$ for each $\x\in \Scal(\R)$  corresponding to $x\notin(-1,1)$. {Note that in this case
$ \u=\log(\shuffle (e_\emptyset +e_1))$,
implying that $\exp(\shuffle \u)= e_\emptyset+e_1$, hence
$$ | \exp(\shuffle \u)|_\x=| e_\emptyset+e_1|_\x=1+|x|<\infty,$$
and thus that $\langle\exp(\shuffle \u),
\x\rangle$ is well defined for each $\x\in \Scal(\R)$.}
The interpretation of this example can be found by considering $\x$ corresponding to $x\in(-1,1)$.  
For such $\x$ we indeed have $\langle \u,\x\rangle=\log(1+x)$.
\end{example}

Before stating the results we need a technical lemma of which we omit the proof.
\begin{lemma}\label{lem7}
For each $\u\in T((\R^d))$  it holds 
$$\exp(\shuffle \u)^{(1)}=\exp(\shuffle \u)\shuffle \u^{(1)},\qquad \exp(\shuffle \u)^{(2)}=\exp(\shuffle\u)\shuffle(\u^{(2)}+{\bf u}^{(1)}\shuffle({\bf u}^{(1)})^\top),$$ and hence
$L(\exp(\shuffle\u))=\exp(\shuffle\u)\shuffle R(\u)$
for $R$ and $L$ as in $\eqref{eq:R}$ and \eqref{eq:L}, respectively.
\end{lemma}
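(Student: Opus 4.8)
The plan is to encode both shifts through a single family of elementary operators and then to exploit that these operators are derivations for the shuffle product. For each letter $k\in\{1,\dots,d\}$ let $D_k$ denote the \emph{right truncation} that sends $e_I$ to $e_{I'}$ when $i_{|I|}=k$ and to $0$ otherwise, extended componentwise to all of $T((\R^d))$. Unwinding the definitions of the shifts in Section~\ref{sec22} shows that the $k$-th component of $\u^{(1)}$ equals $D_k\u$, and that the $(k,l)$-entry of $\u^{(2)}$ equals $D_kD_l\u$, since stripping a trailing letter $l$ and then a trailing letter $k$ is exactly the operation $e_I\mapsto e_{I''}1_{\{i_{|I|-1}=k,\,i_{|I|}=l\}}$ built into the matrix defining $\u^{(2)}$. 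Hence it suffices to work with the $D_k$.

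First I would verify that each $D_k$ is a derivation for the shuffle product, i.e.\ that $D_k(\a\shuffle\b)=D_k(\a)\shuffle\b+\a\shuffle D_k(\b)$. This follows directly from the recursive definition of $\shuffle$ in Definition~\ref{shuffle-product}: every word occurring in $e_I\shuffle e_J$ ends either in $i_{|I|}$ (the terms $(e_{I'}\shuffle e_J)\otimes e_{i_{|I|}}$) or in $j_{|J|}$ (the terms $(e_I\shuffle e_{J'})\otimes e_{j_{|J|}}$), so applying $D_k$ returns $1_{\{i_{|I|}=k\}}(e_{I'}\shuffle e_J)+1_{\{j_{|J|}=k\}}(e_I\shuffle e_{J'})$, which is precisely $D_k(e_I)\shuffle e_J+e_I\shuffle D_k(e_J)$; bilinearity then gives the general statement. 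Since $D_k$ is a derivation and $\shuffle$ is commutative, the Leibniz rule yields $D_k(\u^{\shuffle n})=n\,\u^{\shuffle(n-1)}\shuffle D_k(\u)$, and summing the series defining $\exp(\shuffle\u)$ gives the first identity
\begin{equation*}
D_k(\exp(\shuffle\u))=\exp(\shuffle\u)\shuffle D_k(\u),
\end{equation*}
that is $\exp(\shuffle\u)^{(1)}=\exp(\shuffle\u)\shuffle\u^{(1)}$ componentwise. No convergence issue arises here because, as noted after Definition~\ref{def:expshuffle}, every component of $\exp(\shuffle\u)$ is a finite sum.

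To obtain the second identity I would apply $D_k$ to the first identity written for the letter $l$, using the derivation property once more:
\begin{align*}
D_kD_l(\exp(\shuffle\u))
&=D_k\big(\exp(\shuffle\u)\shuffle D_l(\u)\big)\\
&=\big(\exp(\shuffle\u)\shuffle D_k(\u)\big)\shuffle D_l(\u)+\exp(\shuffle\u)\shuffle D_kD_l(\u)\\
&=\exp(\shuffle\u)\shuffle\big(D_k(\u)\shuffle D_l(\u)+D_kD_l(\u)\big).
\end{align*}
Reading this over all pairs $(k,l)$ and recalling $D_k\u=(\u^{(1)})_k$, $D_kD_l\u=(\u^{(2)})_{kl}$ and $(\u^{(1)})_k\shuffle(\u^{(1)})_l=(\u^{(1)}\shuffle(\u^{(1)})^\top)_{kl}$ gives exactly $\exp(\shuffle\u)^{(2)}=\exp(\shuffle\u)\shuffle(\u^{(2)}+\u^{(1)}\shuffle(\u^{(1)})^\top)$. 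Finally, substituting the two shift identities into the definition \eqref{eq:L} of $L$ and factoring out the common shuffle-factor $\exp(\shuffle\u)$, which is legitimate since $\shuffle$ is bilinear, associative and commutative, collapses the expression to $\exp(\shuffle\u)\shuffle R(\u)$ with $R$ as in \eqref{eq:R}.

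The only genuinely delicate point is the bookkeeping involved in identifying the shifts with the composed truncations $D_kD_l$ and in keeping the vector/matrix index conventions of $\u^{(1)},\u^{(2)}$ and the trace in $L$ and $R$ consistent; the algebra itself is entirely driven by the single fact that right truncation is a shuffle-derivation, and the finiteness of each graded component of $\exp(\shuffle\u)$ removes any need for analytic control.
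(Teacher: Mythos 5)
Your proof is correct. The paper explicitly omits the proof of this lemma, so there is nothing to compare against; your route — identifying the components of the shifts $\u^{(1)}$ and $\u^{(2)}$ with the right-truncation operators $D_k$ and $D_kD_l$, checking from the recursive definition of $\shuffle$ that each $D_k$ is a shuffle-derivation, and then applying Leibniz to the (componentwise finite) exponential series — is the natural argument, and your bookkeeping of the vector/matrix conventions and of the final factorization of $L(\exp(\shuffle\u))$ into $\exp(\shuffle\u)\shuffle R(\u)$ is accurate.
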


We provide now a generalization of the affine-transform formula. Observe that given $\u\in \Scal(S)^*$, by Lemma~\ref{lem1}\ref{itlem3} we recover the result of Theorem~\ref{th:uniaffine} with much weaker assumptions, since we require conditions on
$\langle \exp(\shuffle\u),\X\rangle$ rather than on $\langle \u,\X\rangle$.

\begin{theorem} \label{newthm}
Fix $X$ as in \eqref{eq:SigSDE} and let $\X$ denote its signature.
Fix 
 $\Ucal\subseteq T((\R^d))$ such that
 $\exp(\shuffle \u)\in\Scal(S)^*$,
 \begin{equation}\label{eqn23bis}
 \exp(\shuffle\u)\shuffle\u^{(1)}\in(\Scal(S)^*)^{d}\quad\text{and}\quad
 \exp(\shuffle\u)\shuffle(\u^{(2)}+{\bf u}^{(1)}\shuffle({\bf u}^{(1)})^\top)\in (\Scal(S)^*)^{d\times d}
 \end{equation}
 for each $\u\in 
 \Ucal$ and consider the map $R:\mathcal{U} \to T((\R^d))$ given by
\eqref{eq:R}.
Suppose also that 
$(\langle \exp(\shuffle\u),\X_t\rangle)_{t\in[0,T]}$ and $
(\langle \exp(\shuffle \u)\shuffle R(\u),\X_t\rangle)_{t\in[0,T]}$
are continuous and setting $L$ as in \eqref{eq:L} assume that
$(\sup_N|\langle \exp(\shuffle\u)^{\leq N},\X_t\rangle|)_{t\in[0,T]}$ and $
(\sup_N|\langle L(\exp(\shuffle\u)^{\leq N}),\X_t\rangle|)_{t\in[0,T]}$
are locally bounded for each $\u\in \Ucal$.
Fix $\u\in \Ucal$ and suppose then that the Riccati ODE
\begin{equation}\label{eqn2bis}
{\bm \psi}(t) =\u+\int_0^t R({\bm \psi}(s))ds,
\end{equation}
admits a  $\mathcal{U}$-valued  solution $({\bm \psi}(t))_{t\in [0,T]}$ such that
$\int_0^T|\exp(\shuffle \bm\psi(s))\shuffle R(\bm\psi(s))|_\x ds<\infty$
for all $\mathbf{x} \in \Scal(S)$.
Moreover, assume that
\begin{equation} \label{eq:intcond1bis}
\begin{aligned}
&\mathbb{E}[\sup_{s,t \leq T} |\langle(\exp(\shuffle {\bm \psi}(s)), \mathbb{X}_t \rangle)|] < \infty,\quad\text{ and} \\
 &\mathbb{E}[\sup_{s, t \leq T} |\langle \exp(\shuffle \bm \psi(s))\shuffle R({\bm \psi}(s)), \mathbb{X}_t\rangle |] < \infty.
\end{aligned}
\end{equation}
Then,  it holds
$
\mathbb{E}[\langle \exp(\shuffle\mathbf{u}), \mathbb{X}_T \rangle ]=  \exp(\langle{\bm \psi}(T) , e_\emptyset \rangle).
$
\end{theorem}
\begin{proof}
Let $({\bm \psi}(t))_{t\in [0,T]}$ be a  $\mathcal{U}$-valued  solution  of \eqref{eqn2bis}. For the sake of exposition we present this proof using the differential form of \eqref{eqn2bis}.
Using the `$\overline{\u}^{\shuffle k}$-notation' as introduced in Definition~\ref{def:expshuffle}, 
observe that 
\begin{align*}
    \partial_t\langle\overline{\bm\psi(t)}^{\shuffle k},e_I\rangle
&=\partial_t\Big(\sum_{|I_1|,\ldots,|I_k|\geq0}
\langle \overline{\bm\psi(t)},e_{I_1}\rangle
\cdots
\langle\overline{\bm\psi(t)},e_{I_k}\rangle
\langle e_{I_1}\shuffle\cdots\shuffle e_{I_k},e_I\rangle\Big)\\
&=k\sum_{|I_1|,\ldots,|I_k|\geq0}
\langle \overline{\bm\psi(t)},e_{I_1}\rangle
\cdots
\langle \overline{\bm\psi(t)},e_{I_{k-1}}\rangle
\langle \overline{R(\bm\psi(t))},e_{I_k}\rangle
\langle e_{I_1}\shuffle\cdots\shuffle e_{I_k},e_I\rangle\\
&=k\langle\overline{\bm\psi(t)}^{\shuffle(k-1)}\shuffle \overline{R(\bm\psi(t))},e_I\rangle,
\end{align*}
where summation and differentiation can be exchanged since the sum has finitely many nonzero elements.
This in particular yields
\begin{align*}
    \partial_t\langle\exp(\shuffle\overline{\bm\psi(t)}),e_I\rangle
&=\partial_t\langle\sum_{k=0}^\infty\frac{\overline{\bm\psi(t)}^{\shuffle k}}{k!},e_I\rangle
=\sum_{k=0}^\infty\partial_t\langle\frac{\overline{\bm\psi(t)}^{\shuffle k}}{k!},e_I\rangle\\
&=\sum_{k=1}^\infty\langle\frac{\overline{\bm\psi(t)}^{\shuffle (k-1)}}{(k-1)!}\shuffle \overline{R(\bm\psi(t))},e_I\rangle
=\langle\exp(\shuffle\overline{\bm\psi(t)})\shuffle \overline{R(\bm\psi(t))},e_I\rangle,
\end{align*}
where again we use that the appearing sum has finitely many nonzero elements. Since 
$$\langle \exp(\shuffle\bm\psi(t)),e_I\rangle
=\langle\exp(\shuffle\overline{\bm\psi(t)}),e_I\rangle \exp(\bm\psi(t)_\emptyset)$$
and 
$\partial_t \exp(\bm\psi(t)_\emptyset)=\exp(\bm\psi(t)_\emptyset) (R(\bm\psi(t)))_\emptyset$
we get
\begin{align*}
    \partial_t\langle\exp(\shuffle\bm\psi(t)),e_I\rangle
&=
\langle\exp(\shuffle\overline{\bm\psi(t)})\shuffle \overline{R(\bm\psi(t))},e_I\rangle
\exp(\bm\psi(t)_\emptyset) \\
&\qquad+
\langle\exp(\shuffle\overline{\bm\psi(t)}),e_I\rangle
\exp(\bm\psi(t)_\emptyset) (R(\bm\psi(t)))_\emptyset\\
&=\langle\exp(\shuffle\bm\psi(t))\shuffle (\overline{R(\bm\psi(t))}+(R(\bm\psi(t)))_\emptyset e_\emptyset),e_I\rangle\\
&=\langle\exp(\shuffle\bm\psi(t))\shuffle R(\bm\psi(t)),e_I\rangle.
\end{align*}
The result now follows directly from Theorem~\ref{eq:expectedsig} and Lemma~\ref{lem7}.
\end{proof}

Finally, we explain how to move from the moment formula to the generalized affine-transform formula provided in the last theorem.
\begin{theorem}\label{eq:combining}
Fix $\u\in T((\R^d))+ i T((\R^d))$ and suppose that $({\bf c}(t))_{t\in[0,T]}$ satisfies the conditions of Theorem~\ref{eq:expectedsig} for ${\bf c}(0)=\exp(\shuffle \u)$. Assume that ${\bf c}(t)_\emptyset\neq0$ 
for each $t\in[0,T]$ and set $$\bm\psi(t)_\emptyset=\u_\emptyset+\int_0^t\frac{L{\bf c}(s)_\emptyset}{{\bf c}(s)_\emptyset}ds$$
and 
$\overline{\bm\psi(t)}:=\log(\shuffle {\bf d}(t))$ for ${\bf d}(t):={{\bf c}(t)}/{{\bf c}(t)_\emptyset}$.
Then $\bm \psi$ satisfies the conditions of Theorem~\ref{newthm}.
\end{theorem}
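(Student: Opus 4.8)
The plan is to recognize ${\bm \psi}$ as the shuffle-logarithm of the linear-ODE solution ${\bf c}$ and to push the construction backwards: once I know $\exp(\shuffle {\bm \psi}(t))={\bf c}(t)$, the Riccati equation for ${\bm \psi}$ will follow from the relation $L(\exp(\shuffle\,\cdot))=\exp(\shuffle\,\cdot)\shuffle R(\cdot)$ of Lemma~\ref{lem7} by a shuffle chain rule and cancellation, and the hypotheses of Theorem~\ref{newthm} will reduce term by term to those already assumed for ${\bf c}$. First I would prove $\exp(\shuffle {\bm \psi}(t))={\bf c}(t)$ for all $t$. Factoring $\exp(\shuffle {\bm \psi}(t))=\exp({\bm \psi}(t)_\emptyset)\,\exp(\shuffle \overline{{\bm \psi}(t)})$, Lemma~\ref{lem6} gives $\exp(\shuffle \overline{{\bm \psi}(t)})=\exp(\shuffle \log(\shuffle {\bf d}(t)))={\bf d}(t)$ because ${\bf d}(t)_\emptyset=1$. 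It then remains to check $\exp({\bm \psi}(t)_\emptyset)={\bf c}(t)_\emptyset$: the scalar map $t\mapsto {\bf c}(t)_\emptyset\exp(-{\bm \psi}(t)_\emptyset)$ has vanishing derivative, since ${\bf c}$ solving the linear ODE gives $\partial_t {\bf c}(t)_\emptyset=(L{\bf c}(t))_\emptyset$ while ${\bm \psi}(t)_\emptyset$ was defined precisely as $\u_\emptyset+\int_0^t (L{\bf c}(s))_\emptyset/{\bf c}(s)_\emptyset\,ds$; its value at $0$ is $\exp(\u_\emptyset)\exp(-\u_\emptyset)=1$ using ${\bf c}(0)_\emptyset=\exp(\shuffle\u)_\emptyset=\exp(\u_\emptyset)$. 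Hence $\exp(\shuffle {\bm \psi}(t))={\bf c}(t)_\emptyset {\bf d}(t)={\bf c}(t)$, and ${\bm \psi}(0)=\u$ follows since ${\bm \psi}(0)_\emptyset=\u_\emptyset$ by construction and $\overline{{\bm \psi}(0)}=\log(\shuffle {\bf d}(0))=\log(\shuffle \exp(\shuffle \overline{\u}))=\overline{\u}$ by the mutual-inverse property of $\exp(\shuffle\,\cdot)$ and $\log(\shuffle\,\cdot)$.

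Next I would derive the Riccati ODE. The key ingredient is the shuffle product rule: differentiating $\exp(\shuffle {\bm \psi}(t))$ componentwise — each component being a finite sum once $\exp({\bm \psi}(t)_\emptyset)$ is factored out, exactly as in the computation $\partial_t\langle \overline{{\bm \psi}(t)}^{\shuffle k},e_I\rangle=k\langle \overline{{\bm \psi}(t)}^{\shuffle(k-1)}\shuffle \partial_t\overline{{\bm \psi}(t)},e_I\rangle$ carried out in the proof of Theorem~\ref{newthm} — yields $\partial_t \exp(\shuffle {\bm \psi}(t))=\exp(\shuffle {\bm \psi}(t))\shuffle \partial_t {\bm \psi}(t)$. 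On the other hand, since ${\bf c}=\exp(\shuffle {\bm \psi})$ solves $\partial_t {\bf c}(t)=L({\bf c}(t))$, Lemma~\ref{lem7} gives $\partial_t \exp(\shuffle {\bm \psi}(t))=L(\exp(\shuffle {\bm \psi}(t)))=\exp(\shuffle {\bm \psi}(t))\shuffle R({\bm \psi}(t))$. Equating the two and shuffle-multiplying by the shuffle inverse $\exp(\shuffle(-{\bm \psi}(t)))$, which exists by Lemma~\ref{lem6} because ${\bf c}(t)_\emptyset\neq0$, cancels the common factor and leaves $\partial_t {\bm \psi}(t)=R({\bm \psi}(t))$; combined with ${\bm \psi}(0)=\u$ this is the integral equation \eqref{eqn2bis}, with the required integrability $\int_0^T|\exp(\shuffle {\bm \psi}(s))\shuffle R({\bm \psi}(s))|_\x\,ds=\int_0^T|L({\bf c}(s))|_\x\,ds<\infty$ holding by the hypothesis on ${\bf c}$.

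Finally I would transfer the remaining hypotheses of Theorem~\ref{newthm}. By Lemma~\ref{lem7} one has $\exp(\shuffle {\bm \psi})^{(1)}=\exp(\shuffle {\bm \psi})\shuffle {\bm \psi}^{(1)}$, $\exp(\shuffle {\bm \psi})^{(2)}=\exp(\shuffle {\bm \psi})\shuffle({\bm \psi}^{(2)}+{\bm \psi}^{(1)}\shuffle ({\bm \psi}^{(1)})^\top)$ and $L(\exp(\shuffle {\bm \psi}))=\exp(\shuffle {\bm \psi})\shuffle R({\bm \psi})$, so condition \eqref{eqn23bis} and the continuity and local-boundedness requirements of Theorem~\ref{newthm} for ${\bm \psi}$ are literally the conditions of Theorem~\ref{eq:expectedsig} for the dual element ${\bf c}(\cdot)=\exp(\shuffle {\bm \psi}(\cdot))$, which are assumed. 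Likewise, substituting $\exp(\shuffle {\bm \psi}(s))={\bf c}(s)$ and $\exp(\shuffle {\bm \psi}(s))\shuffle R({\bm \psi}(s))=L({\bf c}(s))$, the two expectations in \eqref{eq:intcond1bis} become exactly the conditions \eqref{eq:intcond2} of Theorem~\ref{thm:momentpoly}, both of which hold by hypothesis. Hence every condition of Theorem~\ref{newthm} is met by ${\bm \psi}$.

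The step I expect to be the main obstacle is the passage from ``$\exp(\shuffle {\bm \psi})$ solves the linear ODE'' to ``${\bm \psi}$ solves the Riccati ODE'', that is, differentiating the shuffle exponential and justifying the shuffle cancellation: this rests on the componentwise-finite structure of $\exp(\shuffle\,\cdot)$ and on shuffle-invertibility, which is precisely why the hypothesis ${\bf c}(t)_\emptyset\neq0$ is indispensable. I would also note that the branch-of-logarithm issue for the possibly complex scalar ${\bf c}(t)_\emptyset$ is sidestepped by working with the product $t\mapsto {\bf c}(t)_\emptyset\exp(-{\bm \psi}(t)_\emptyset)$ rather than identifying ${\bm \psi}(t)_\emptyset$ with a complex logarithm of ${\bf c}(t)_\emptyset$.
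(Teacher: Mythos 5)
Your proposal is correct, and it shares the paper's overall strategy -- first establish $\exp(\shuffle\bm\psi(t))={\bf c}(t)$, then transfer the linear ODE for ${\bf c}$ into the Riccati ODE for $\bm\psi$ via Lemma~\ref{lem7} -- but it executes the central differentiation step differently. The paper works on the logarithm side: it differentiates $\bm\psi(t)_I=\sum_{k\ge 1}\tfrac{(-1)^{k+1}}{k}\langle\overline{{\bf d}(t)}^{\shuffle k},e_I\rangle$ term by term, substitutes $\partial_t{\bf d}(t)$ from the linear ODE, and telescopes the two resulting geometric-type series to land on $R(\bm\psi(t))_I$ directly. You instead work on the exponential side: you reuse the chain rule $\partial_t\exp(\shuffle\bm\psi(t))=\exp(\shuffle\bm\psi(t))\shuffle\partial_t\bm\psi(t)$ already derived in the proof of Theorem~\ref{newthm}, equate it with $L({\bf c}(t))=\exp(\shuffle\bm\psi(t))\shuffle R(\bm\psi(t))$, and cancel the common factor by shuffle-multiplying with $\exp(\shuffle(-\bm\psi(t)))$. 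This buys a shorter and less computational argument (no series manipulation for the logarithm), at the price of having to justify that $\bm\psi(t)_I$ is differentiable in $t$ before the chain rule applies -- which does hold, since each $\bm\psi(t)_I$ is a finite algebraic expression in the absolutely continuous components ${\bf c}(t)_J$, $|J|\le|I|$, and ${\bf c}(t)_\emptyset\ne 0$. Your verification that $\exp(\bm\psi(t)_\emptyset)={\bf c}(t)_\emptyset$ via the constancy of $t\mapsto{\bf c}(t)_\emptyset\exp(-\bm\psi(t)_\emptyset)$ is a welcome addition: the paper simply asserts this identity, and your route also sidesteps the branch ambiguity of the complex logarithm. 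One small imprecision: the shuffle inverse $\exp(\shuffle(-\bm\psi(t)))$ exists for any argument by Lemma~\ref{lem6}; the hypothesis ${\bf c}(t)_\emptyset\ne 0$ is needed earlier, to make ${\bf d}(t)$ and hence $\bm\psi(t)=\log(\shuffle{\bf d}(t))$ well defined, not to guarantee invertibility of the exponential. The final transfer of the hypotheses of Theorem~\ref{newthm} to those of Theorem~\ref{eq:expectedsig} via the three identities of Lemma~\ref{lem7} is exactly what the paper intends by its closing sentence.
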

\begin{proof}
First observe that
$\exp(\bm\psi(t)_\emptyset)={\bf c}(t)_\emptyset$ and  Lemma~\ref{lem6} yields
$
\exp(\shuffle \overline{\bm\psi(t)})={\bf d}(t).
$
This implies that
\begin{equation}\label{eqn24}
\exp(\bm\psi(t)_\emptyset)\exp(\shuffle \overline{\bm\psi(t)})={\bf c}(t)
\end{equation}
which together with Lemma~\ref{lem7}
yield
$$\bm\psi(t)_\emptyset
=\bm\psi(0)_\emptyset+\int_0^t \frac{\exp(\bm\psi(s)_\emptyset)R(\bm \psi(s))_\emptyset}{\exp(\bm\psi(s)_\emptyset)} ds
=\bm\psi(0)_\emptyset+\int_0^t R(\bm \psi(s))_\emptyset ds.
$$
Finally, proceeding as in the proof of Theorem~\ref{newthm}, for each $|I|>0$ we can conclude that
\begin{align*}
    \partial_t\bm\psi(t)_I
    &=
 \sum_{k=1}^\infty\frac{(-1)^{k+1} }{k}\partial_t\langle\overline{{\bf d }(t)}^{\shuffle k},e_I\rangle\\
& = \sum_{k=1}^\infty{(-1)^{k+1} }\langle\overline{{\bf d }(t)}^{\shuffle (k-1)}\shuffle\Big(-\overline{{\bf d }(t)}R(\bm\psi(t))_\emptyset
+
\overline{{\bf d}(t)\shuffle R(\bm\psi(t))}
\Big),e_I\rangle\\
& =\langle \sum_{k=1}^\infty{(-1)^{k+1} }
\overline{{\bf d }(t)}^{\shuffle (k-1)}\shuffle
\Big((e_\emptyset+\overline{{\bf d }(t)})
\shuffle
\overline{R(\bm\psi(t))}
\Big),e_I\rangle\\
& =\langle \Big(\sum_{k=0}^\infty{(-1)^{k} }
\overline{{\bf d }(t)}^{\shuffle k}
-\sum_{k=1}^\infty{(-1)^{k} }
\overline{{\bf d }(t)}^{\shuffle k}\Big)
\shuffle
\overline{R(\bm\psi(t))},e_I\rangle\\
&=
R(\bm\psi(t))_I.
\end{align*}
The claim now follows by Lemma~\ref{lem7} and \eqref{eqn24}.
\end{proof}

\section{Power sequences of one dimensional diffusions with  real-analytic characteristics}\label{sec5}

In this section we consider the case  $d=1$ and a process $\X$  given as the sequence of powers of some stochastic process $X$, i.e.
\begin{align}\label{eq:X1d}
\X=(1, X, X^2, \ldots, X^n, \ldots ).
\end{align}
In order to simplify the notation for $\a\in T((\R))$ we set $\a_n:=\langle e_1^{\otimes n},\a\rangle$. Moreover, since in this context there is no risk of misinterpretation, we use upper indices to denote  powers, and not to denote semimartingale components.

Fix $S \subseteq \mathbb{R}$ and consider a one-dimensional diffusion process $X$ on $S$ given by
\begin{align}\label{eq:SDE1d_new}
dX_t=b(X_t)dt +  \sqrt{a(X_t)} dB_t, \quad X_0=x_0,
\end{align}
where 
\begin{equation}\label{eqn4new}
b(x)=\sum_{n=0}^{\infty} {\bf b}_n x^n\qquad\text{and}\qquad a(x)=\sum_{n=0}^{\infty} {\bf a}_n x^n,\qquad x\in S,
\end{equation}
and $\mathbf{b}$, $\mathbf{a}\in T((\mathbb{R}))$. Assume that $|b(x)|,|a(x)|<\infty$ for each $x\in S+B_\e(0)$, for some $\e>0$.
We then denote by $\Acal:C^2(\R)\to C(\R)$ the corresponding extended generator
$$\Acal f:=bf'+\frac 1 2 af''.$$
Following Lemma~\ref{lem1}, observe that letting
$$\Ss:=\{ \mathbf x\in T((\R))\colon {\bf x}=(1,x,x^2,\ldots),\ x\in S\}$$
we get that $\mathbf u\in \Tcal(S)^*$ if and only if
 $$\gg_{\bf u}(x):=\sum_{n=0}^\infty {\bf u}_nx^n$$
 converges absolutely for each $x\in S$. If this is the case, then we set $\langle \mathbf{u}, \mathbf{x}\rangle:=\gg_{\mathbf u}(\x_1)$ for each ${\bf x}\in \Tcal(S)$. Let then $\star$ denote the discrete convolution, i.e.
$$({\bf u}\star{\bf v})_n:=\sum_{n_1+n_2=n}{\bf u}_{n_1} {\bf v}_{n_2},\qquad {\bf u},{\bf v}\in T((\R))$$
and observe that power series analysis yields ${\bf u}\star{\bf v}\in \Tcal(S)^*$ and $\gg_{\mathbf u}(x)\gg_{\mathbf v}(x)=\gg_{{\bf u}\star {\bf v}}(x)$ for each ${\bf u},{\bf v}\in \Tcal(S)^*$.

Finally, we often consider the component-wise open set 
\begin{equation}\label{eqn20}
   \Vcal:=\{ \mathbf u\in T((\R))\colon |\gg_{\mathbf u}(x)|<\infty\text{ for all }
x\in S+B_\e(0),\ \e>0\}
\end{equation}
and for each $\u$ in $\Vcal$ we set \begin{equation}\label{eqn21}
\begin{aligned}
  \u^{[1]}&:=(\u_1,2 \u_2, 3\u_3,\ldots,k\u_k,\ldots) \\
  \u^{[2]}&:=(2\u_2,6\u_3,12 \u_4,\ldots, k(k-1)\u_k,\ldots).
\end{aligned}
\end{equation}
Observe that these elements of the extended tensor algebra correspond  to $\u^{(1)}$ and $\u^{(2)}$ when the shuffle product is replaced by the discrete convolution.
\begin{remark}\label{rem5}
By the properties of power series in their convergence domain we have that $\Vcal\subseteq \Tcal(S)^*$. Moreover,  for each $\u\in \Vcal$ the map $h_\u$ is smooth on $S$, both $\u^{[1]}$ and $\u^{[2]}$ are elements of $ \Vcal$ and thus of $\Tcal(S)^*$, and it holds
$$h'_\u=h_{\u^{[1]}}\qquad\text{and}\qquad h''_\u=h_{\u^{[2]}}.$$

Note also that it can be of interest to consider power series expansions around points different than 0. The whole theory of this section can then be applied considering the shifted process $X^\alpha:=X-\alpha$ for some $\alpha\in \R$. This is for instance used to apply the results of  Theorem~\ref{lem4new} to the  shifted Jacobi diffusion in Section~\ref{ex1}.

Finally, note  that
the processes analyzed in this section do not correspond to signatures of one dimensional processes, which would take  values in 
$$\Scal(S-x_0)=\{ \mathbf x\in T((\R))\colon {\bf x}=(1,x-x_0,\frac{(x-x_0)^2}2,\frac{(x-x_0)^3}{3!},\ldots),\ x\in S\}.$$
Moreover, since property \eqref{eqn1} is always satisfied for $d=1$, for each ${\bf x}\in \Scal(S)$ and ${\bf u}\in \Scal(S)^*$, by Lemma~\ref{lem1} we can write
$\langle \mathbf{u}, \mathbf{x}\rangle=g_{\mathbf u}(\x_1)$ for
$
g_{\mathbf u}(x-x_0)=\sum_{n=0}^{\infty} \u_n \frac {(x-x_0)^n}{n!}.
$
With this parametrization, the assertions of Remark~\ref{rem5} read
$g'_\u=g_{\u^{(1)}}$ and $g''_\u=g_{\u^{(2)}}.$
\end{remark}

\subsection{The affine case}

Similarly to Section~\ref{sec41}, we
show that $\X$ given by \eqref{eq:X1d} for a diffusion $X$ of the form \eqref{eq:SDE1d_new}
is a $\Tcal(S)$-valued affine process, for which we can express the map $R$ explicitly in terms of the coefficients of \eqref{eq:SDE1d_new}. We start here by formulating the affine transform formula via the transport equation, similarly as in Theorem~\ref{thm:affinetrans}.
Recall also that  $\nabla$ denotes the component-wise derivative in the sense of Definition~\ref{def1}.

\begin{theorem}\label{thm1}
Fix $T>0$ and  $\Vcal$ as in \eqref{eqn20}. Then
the process
$\Xx:=(1, X, X^2, \ldots, X^n, \ldots)$
is $\Vcal$-affine with respect to the operator $R:\Vcal\to T((\R))$ given by 
\begin{equation}\label{eqn30}
R(\mathbf u)={\bf b}\star {\bf u}^{[1]}+\frac 1 2 {\bf a}\star\big({\bf u}^{[2]}+{\bf u}^{[1]}\star{\bf u}^{[1]}\big).
\end{equation}
Fix then a component-wise semi-open set $\Ucal\subseteq\Vcal$ and assume  that for all $n\in \N_0$ and ${\bf u}\in \Ucal$ 
\begin{equation}\label{eqn3}
\E[\sup_{t\leq T}|X_t|^n{ \exp(\gg_{\bf u}(X_t))}] < \infty\quad \text{and}\quad \E\Big[\sup_{t\leq T, n\in \N} |\gg_{\tru {R(\u)} n}(X_t)|\exp(\gg_{\bf u}(X_t))\Big] < \infty.
\end{equation}
Then
$
\mathbb{E} [\exp(\gg_{\bf u} (X_T))]=v(T,\mathbf{u}),
$
where $v(t,{\bf u})$ is a solution to the transport equation
\begin{equation}\label{eqn12}
\partial_t v(t,\mathbf{u}) 
=\langle R({\bf u}),\nabla_{\bf u}v(t,\mathbf{u})\rangle,
 \quad v(0,\mathbf{u})
=\exp(\gg_{\bf u} (x_0)).
\end{equation}
\end{theorem}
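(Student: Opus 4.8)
The plan is to prove the two assertions in sequence. First I would show that $\Xx$ solves the martingale problem for an operator $\mathcal{L}$ of $\Vcal$-affine type whose $R$ is exactly \eqref{eqn30}, and then I would invoke Theorem~\ref{thm:affinetrans}, applied with the componentwise-semi-open set $\Ucal\subseteq\Vcal$, to obtain the transport representation. The whole argument hinges on the dictionary between power-series operations and the operations on $T((\R))$: products correspond to discrete convolution, $\gg_{\u}\gg_{\v}=\gg_{\u\star\v}$, while the shifts $\u^{[1]},\u^{[2]}$ from \eqref{eqn21} encode differentiation, $\gg_{\u}'=\gg_{\u^{[1]}}$ and $\gg_{\u}''=\gg_{\u^{[2]}}$, both recorded in Remark~\ref{rem5}.

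For the affine structure I would fix $\u\in\Vcal$ and apply It\^o's formula to $Y_t:=\exp(\gg_{\u}(X_t))$. Since $\u\in\Vcal$, the function $\gg_{\u}$ converges on the open neighborhood $S+B_\e(0)$ of the range of $X$ and is therefore $C^2$ there, so It\^o's formula is legitimate along the $S$-valued path $X$. Writing $f:=\exp\circ\gg_{\u}$, one has $f'=\gg_{\u^{[1]}}f$ and $f''=(\gg_{\u^{[2]}}+\gg_{\u^{[1]}}^2)f$, whence $\Acal f=(b\gg_{\u^{[1]}}+\frac{1}{2}a(\gg_{\u^{[2]}}+\gg_{\u^{[1]}}^2))f$. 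Substituting $b=\gg_{\b}$, $a=\gg_{\a}$ and applying the convolution rule term by term collapses the bracket to $\gg_{R(\u)}$ with $R(\u)$ precisely as in \eqref{eqn30}; closure of $\Tcal(S)^*$ under $\star$, together with $\u^{[1]},\u^{[2]}\in\Vcal\subseteq\Tcal(S)^*$, guarantees $R(\u)\in\Tcal(S)^*$ and hence that $\langle R(\u),\x\rangle=\gg_{R(\u)}(x)$ is well defined. This shows $\Acal f(x)=\exp(\langle\u,\x\rangle)\langle R(\u),\x\rangle$, i.e.\ $\mathcal{L}$ is of $\Vcal$-affine type, and the drift produced by It\^o's formula equals $\int_0^t\exp(\gg_{\u}(X_s))\gg_{R(\u)}(X_s)\,ds$, so the corresponding $N^{\u}$ is a local martingale. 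Property~(i) holds since $\Xx_0=(1,x_0,x_0^2,\dots)\in\Tcal(S)$, property~(ii) since $X$ and $\gg_{\u},\gg_{R(\u)}$ are continuous, and property~(iii) is exactly this local martingale; this proves the first claim.

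For the transform formula I would check that the hypotheses \eqref{eqn3} are merely the one-dimensional specialization of \eqref{eqn2i}--\eqref{eqn2ii} in Theorem~\ref{thm:affinetrans}. Here the only word of length $n$ is $e_1^{\otimes n}$, so $\langle e_I,\Xx_t\rangle=X_t^n$; consequently the quantity $\rub(\u,\Xx_t)=\sup_N|\sum_{|I|\le N}\langle e_I,\Xx_t\rangle\langle e_I,R(\u)\rangle|$ reduces to $\sup_N|\gg_{\tru{R(\u)}{N}}(X_t)|$ and $\exp(\langle\u,\Xx_t\rangle)=\exp(\gg_{\u}(X_t))$. Under this identification the first bound in \eqref{eqn3} (taken over all $n\ge 0$) is \eqref{eqn2ii} and the second is \eqref{eqn2i}. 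Since $\Ucal$ is componentwise-semi-open and $\Xx$ is $\Ucal$-affine by restriction of the first part, Theorem~\ref{thm:affinetrans} applies and yields $\E[\exp(\gg_{\u}(X_T))]=\E[\exp(\langle\u,\Xx_T\rangle)]=v(T,\u)$ with $v$ solving \eqref{eqn12}, which is the assertion.

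The main obstacle I anticipate is not the It\^o computation, which is mechanical, but the careful justification that the algebraic identities hold on the nose: term-by-term differentiation and multiplication of the involved power series must be legitimate, which they are precisely because $\u\in\Vcal$ forces $\gg_{\u}$ (and thus $\gg_{\u^{[1]}},\gg_{\u^{[2]}},\gg_{R(\u)}$) to converge on the open set $S+B_\e(0)$ containing the range of $X$, placing every evaluation strictly inside the domain of convergence. A secondary delicate point is that the passage from the local martingale $N^{\u}$ to a true martingale is not argued separately here but is subsumed in the proof of Theorem~\ref{thm:affinetrans}, so it suffices to verify \eqref{eqn2i}--\eqref{eqn2ii}, i.e.\ \eqref{eqn3}.
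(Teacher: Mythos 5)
Your proposal is correct and follows essentially the same route as the paper: establish the $\Vcal$-affine property via It\^o's formula and the power-series/convolution dictionary of Remark~\ref{rem5} (closure of $\Tcal(S)^*$ under $\star$ giving $\gg_{R(\u)}=b\gg_{\u}'+\tfrac12 a(\gg_{\u}''+(\gg_{\u}')^2)$), then observe that \eqref{eqn3} is the one-dimensional specialization of \eqref{eqn2i}--\eqref{eqn2ii} and invoke Theorem~\ref{thm:affinetrans}. Your explicit translation of $\rub(\u,\Xx_t)$ into $\sup_N|\gg_{\tru{R(\u)}{N}}(X_t)|$ is exactly the identification the paper leaves implicit.
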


\begin{proof}
Observe first that for each ${\bf u}\in \Ucal$ by definition of $\X$ it holds
$$\Lcal(\exp(\langle {\bf u},\fdot\rangle))(\X_t)=\Acal(\exp(\gg_{\bf u}))(X_t).$$
Since $\Tcal(S)^*$ is closed with respect to $\star$,  Remark~\ref{rem5} yields $R(\u)\in \Tcal(S)^*$ for each $\u\in \Vcal$ and $\gg_{R(\mathbf u)}=b\gg_{\bf u}'+\frac 1 2 a(\gg_{\bf u}''+(\gg_{\bf u}')^2)$. 
To show that $\Xx$ is a $\Vcal$-affine process we then need to verify that $\Xx$ solves the martingale problem for 
$$\mathcal{L} \exp(\langle \mathbf{u}, \fdot \rangle)({\bf x}):= \exp(\langle \mathbf{u}, \mathbf{x} \rangle) \langle R(\mathbf{u}), \mathbf{x} \rangle.$$
Note that continuity of $X$ yields property \ref{itii} and since $\Xx_0=(1,x_0, x_0^2, \ldots,x_0^n, \ldots)\in \Ss$
property \ref{iti} is satisfied as well.
 For property \ref{itiii} we already showed that $\gg_{{\bf u}}\in C^2(S)$. Since $\mathcal{L}( \exp(\langle \mathbf{u}, \fdot \rangle))(\Xx_t)=\Acal(\exp(\gg_{{\bf u}}))(X_t)$ for each ${\bf u} \in \Vcal$, the claim then follows from It\^o's formula.
Since condition  \eqref{eqn3} implies conditions \eqref{eqn2i} and \eqref{eqn2ii}, the claimed representation follows by Theorem~\ref{thm:affinetrans}.
\end{proof}

We also have a version involving the Riccati ODE, similarly as in Theorem~\ref{thm:affinetrans2}.

\begin{theorem}\label{thm2}
Let $T >0$, $\Vcal$ be as in \eqref{eqn20},  and fix $R$  as in Theorem~\ref{thm1} and  ${\bf u}\in \Vcal$. 
Suppose that the sequence valued Riccati ODE
\begin{equation}\label{eqn7aff}
 {\bm \psi}(t) =\u+\int_0^t R({\bm \psi}(s))ds,
\end{equation}
admits a  $\Vcal$-valued solution $({\bm \psi}(t))_{t\in[0,T]}$ such that $\int_0^T|R(\bm\psi(s))|_\x ds<\infty$ for each $\x\in \Ss$.
Suppose also that
\begin{equation}\label{eqn14}
\begin{aligned}
&\mathbb{E}[\sup_{s,t \leq T}|\exp(\gg_{{\bm \psi}(t)}(X_s))|] < \infty, \qquad
 \mathbb{E}[\sup_{s,t \leq T}|\exp(\gg_{{\bm \psi}(t)}(X_s))||\gg_{R({\bm \psi}(t))}(X_s)|] < \infty.
\end{aligned}
\end{equation}
Then $\E[\exp(\gg_{{\mathbf u}}(X_T))]=\exp(\gg_{{\bm \psi}(T)}(x_0))$, i.e.
$
\mathbb{E}[\exp(\sum_{n=0}^{\infty} {\bf u}_n  X_T^n)]= \exp(\sum_{n=0}^{\infty} {\bm \psi}_n(T)x_0^n).
$
\end{theorem}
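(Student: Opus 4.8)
The plan is to obtain Theorem~\ref{thm2} as the one-dimensional, power-sequence specialization of the abstract Riccati affine transform formula, Theorem~\ref{thm:affinetrans2}. The starting point is Theorem~\ref{thm1}, which already identifies $\mathbb{X}=(1,X,X^2,\ldots)$ as a $\mathcal{V}$-affine process whose map $R$ is exactly \eqref{eqn30}; thus in the notation of Section~\ref{subsec:affine} I would take the state space to be $\mathcal{T}(S)$ and set $\mathcal{U}=\mathcal{V}$, so that the fixed element $\mathbf{u}\in\mathcal{V}$ lies in the admissible set. No further structural input on the process is needed, since all the martingale machinery is already packaged in Theorem~\ref{thm:affinetrans2}.

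The heart of the argument is the dictionary between the abstract pairing and power-series evaluation. For every $\mathbf{w}\in\mathcal{T}(S)^*$ and every $\mathbf{x}\in\mathcal{T}(S)$ one has $\langle\mathbf{w},\mathbf{x}\rangle=\gg_{\mathbf{w}}(\pi_1(\mathbf{x}))$, so that evaluating along the process yields $\langle\mathbf{w},\mathbb{X}_t\rangle=\gg_{\mathbf{w}}(X_t)$ because $\pi_1(\mathbb{X}_t)=X_t$. Applying this with $\mathbf{w}=\bm\psi(s)$ and with $\mathbf{w}=R(\bm\psi(s))$ turns the two abstract integrability bounds in \eqref{eq:intcond1} into precisely the two assumptions \eqref{eqn14} (the labels $s,t$ being interchangeable under the joint supremum), and likewise identifies the initial-value pairing $\langle\bm\psi(T),\mathbb{X}_0\rangle$ with $\gg_{\bm\psi(T)}(x_0)$ since $\mathbb{X}_0=(1,x_0,x_0^2,\ldots)$. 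For this to be legitimate one must know that $R(\bm\psi(s))\in\mathcal{T}(S)^*$, so that $\gg_{R(\bm\psi(s))}$ is a genuine absolutely convergent series at $X_t$; this is guaranteed because $\mathcal{T}(S)^*$ is closed under discrete convolution and $R$ is assembled from convolutions of $\mathbf{b},\mathbf{a}$ with $\bm\psi^{[1]},\bm\psi^{[2]}$ (Remark~\ref{rem5}).

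With this translation in hand the conclusion is immediate: the Riccati ODE \eqref{eqn7aff} is exactly \eqref{eqn2} for $\mathcal{U}=\mathcal{V}$, the hypothesis $\int_0^T|R(\bm\psi(s))|_{\mathbf{x}}\,ds<\infty$ for all $\mathbf{x}\in\mathcal{T}(S)$ is carried over verbatim, and \eqref{eqn14} supplies \eqref{eq:intcond1}. Theorem~\ref{thm:affinetrans2} then gives $\mathbb{E}[\exp(\langle\mathbf{u},\mathbb{X}_T\rangle)]=\exp(\langle\bm\psi(T),\mathbb{X}_0\rangle)$, and rewriting both pairings through the dictionary produces the asserted identity $\mathbb{E}[\exp(\gg_{\mathbf{u}}(X_T))]=\exp(\gg_{\bm\psi(T)}(x_0))$. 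I expect the only genuine (and rather mild) obstacle to be the bookkeeping of these identifications — in particular verifying that every pairing in sight is well defined and that the two probabilistic moment conditions line up exactly with \eqref{eq:intcond1} — rather than any new analytic estimate.
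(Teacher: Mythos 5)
Your proposal is correct and follows essentially the same route as the paper: the paper's own proof simply recalls that $\Xx$ is an $\Ss$-valued $\Vcal$-affine process by Theorem~\ref{thm1} and then invokes Theorem~\ref{thm:affinetrans2}, with the identification $\langle\mathbf{w},\X_t\rangle=\gg_{\mathbf{w}}(X_t)$ left implicit. Your more explicit bookkeeping of the dictionary between pairings and power-series evaluations is exactly the content the paper compresses into one line.
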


\begin{proof}
Recall that $\Xx$ is a $\Ss$-valued $\Vcal$-affine process by Theorem~\ref{thm1} and  ${\bf u}\in \Vcal$ if and only if $|\gg_{\mathbf u}(x)|<\infty$ for all $x\in S+B_\e(0)$ for some $\e>0$. The claim
follows from Theorem~\ref{thm:affinetrans2}.
\end{proof}

\begin{remark}\label{rem2}
Observe that Remark~\ref{rem2new} in this particular framework states that condition \eqref{eqn14} can be replaced by the following two properties.
\begin{enumerate}
\item\label{it4i} The process $(M_t^{{\bm \psi}(s)})_{t\geq0}$ given by 
\begin{equation}\label{eqn11}
M_t^{{\bm \psi}(s)}:=\exp(\gg_{{\bm \psi}(s)}(X_t))-\exp(\gg_{{\bm \psi}(s)}(x_0))-\int_0^t \mathcal{A}\left(\exp(\gg_{{\bm \psi}(s)})\right)(X_r) dr
\end{equation}
is a true martingale for each $s\in[0,T]$. Since bounded local martingales are martingales, this condition is always satisfied when $S$ is a compact set.
\item\label{it4ii} 
$\int_0^T\int_0^T \E[|\Acal(\exp(\gg_{{\bm \psi}(s)})(X_t))|]ds dt<\infty$.
\end{enumerate}
It also states that
\eqref{eqn7aff} and $\int_0^T|R(\bm\psi(s))|_\x ds<\infty$ can be replaced by
\begin{equation}\label{eqn32}
    h_{{\bm\psi}(t)}(x)=h_\u(x)+\int_0^th_{R({\bm \psi}(s))}(x)ds
\end{equation}
for each $x\in S$.
\end{remark}

\subsection{The polynomial case}
We are now ready to state the moment formula. Again we will use the notation introduced at the beginning of the section.
\begin{theorem}\label{th:polyinfinite}
Fix $T>0$ and set $\Vcal$ as in \eqref{eqn20}.
Then
the process
$\Xx:=(1, X, X^2, \ldots, X^n, \ldots)$
is $\Vcal$-polynomial with respect to the linear operator $L:\Vcal\to T((\R))$ given by
\begin{equation}\label{eqn31}
L({\bf u}):={\bf b}\star {\bf u}^{[1]}+\frac 1 2 {\bf a}\star{\bf u}^{[2]}.
\end{equation}
Fix then ${\bf u}\in \Vcal$
and suppose that the linear ODE
\begin{equation}\label{eqn7}
{\bf c}(t)=\u+\int_0^tL({\bf c}(s))ds,
\end{equation}
admits a $\mathcal{V}$-valued solution  with $\int_0^T|L({\bf c}(s))|_\x ds<\infty$ for all $\x\in \Ss$. Suppose also that
\begin{align}\label{eq:technial}
\mathbb{E}[\sup_{s,t \leq T}| \gg_{{\mathbf c}(t)}(X_s)|] < \infty, \quad\text{and}\quad \mathbb{E}[\sup_{s,t \leq T}|\gg_{L(\mathbf{c}(t))}(X_s)|] < \infty.
\end{align}
Then $\E[\gg_{{\mathbf u}}(X_T)]=\gg_{{\mathbf c}(T)}(x_0)$, i.e.~$
\mathbb{E}\Big[\sum_{n=0}^{\infty} {\bf u}_n  X_T^n\Big]= \sum_{n=0}^{\infty} {\bf c}_n(T)x_0^n.
$
\end{theorem}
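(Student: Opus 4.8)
The plan is to mirror the affine-case argument of Theorem~\ref{thm1}, replacing the quadratic operator $R$ by the linear operator $L$ and the affine transform formula by the moment formula of Theorem~\ref{thm:momentpoly}. First I would record that $L$ genuinely maps $\Vcal$ into $\Tcal(S)^*$: since $\Tcal(S)^*$ is closed under the discrete convolution $\star$ and since $\u^{[1]},\u^{[2]}\in\Vcal\subseteq\Tcal(S)^*$ by Remark~\ref{rem5}, the element $L(\u)={\bf b}\star\u^{[1]}+\tfrac12{\bf a}\star\u^{[2]}$ lies in $\Tcal(S)^*$. The key algebraic identity to extract is that $L$ represents the generator: using the product rule $h_{\u\star\v}=h_\u h_\v$ together with $h_{\u^{[1]}}=h_\u'$ and $h_{\u^{[2]}}=h_\u''$ from Remark~\ref{rem5}, one gets
\[
h_{L(\u)}=b\,h_\u'+\tfrac12\,a\,h_\u''=\Acal h_\u,
\]
and hence, evaluating the pairing at $\pi_1(\X_t)=X_t$, the relation $\langle L(\u),\X_t\rangle=\Acal h_\u(X_t)$.

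Next I would verify that $\X$ solves the martingale problem for the operator $\mathcal{L}(\langle\u,\fdot\rangle)(\x):=\langle L(\u),\x\rangle$, which is of $\Vcal$-polynomial type. Properties~\ref{iti} and \ref{itii} are immediate: $\X_0=(1,x_0,x_0^2,\ldots)\in\Ss$ and $\X$ is continuous. For the local-martingale property~\ref{itiii}, I would use that $\u\in\Vcal$ forces $h_\u\in C^2(S)$ (Remark~\ref{rem5}) and apply It\^o's formula to $h_\u(X_t)$, which yields that
\[
h_\u(X_t)-h_\u(x_0)-\int_0^t \Acal h_\u(X_s)\,ds=\int_0^t h_\u'(X_s)\sqrt{a(X_s)}\,dB_s
\]
is a local martingale; rewriting the left-hand side through the pairing $\langle\u,\X_t\rangle=h_\u(X_t)$ and the identity of the previous step shows that $N^\u_t:=\langle\u,\X_t\rangle-\langle\u,\X_0\rangle-\int_0^t\langle L(\u),\X_s\rangle\,ds$ is a local martingale for every $\u\in\Vcal$. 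This establishes the first assertion.

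For the moment formula I would simply invoke Theorem~\ref{thm:momentpoly} with state space $\Ss$. The hypotheses on the linear ODE~\eqref{eqn7} and on $\int_0^T|L({\bf c}(s))|_\x\,ds<\infty$ are exactly~\eqref{eq:linODE} and its accompanying integrability requirement, while condition~\eqref{eq:technial} is precisely~\eqref{eq:intcond2} written out through $\langle{\bf c}(s),\X_t\rangle=h_{{\bf c}(s)}(X_t)$ and $\langle L({\bf c}(s)),\X_t\rangle=h_{L({\bf c}(s))}(X_t)$. The conclusion $\E[\langle\u,\X_T\rangle]=\langle{\bf c}(T),\X_0\rangle$ then reads $\E[h_\u(X_T)]=h_{{\bf c}(T)}(x_0)$, which is the asserted identity.

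The only genuinely delicate points, which I expect to form the main obstacle, are analytic rather than structural: I must ensure that the termwise manipulations behind $h_{L(\u)}=\Acal h_\u$ are legitimate — that the series $h_\u$ together with its first two derivatives converges and may be multiplied and rearranged on $S$ — and that $h_\u$ is of class $C^2$ up to the boundary of $S$ so that It\^o's formula applies verbatim. Both are guaranteed by the definition of $\Vcal$ in~\eqref{eqn20}, which requires convergence on the enlarged domain $S+B_\e(0)$ and thereby the real-analyticity, hence smoothness, of $h_\u$ on a neighborhood of $S$; this is exactly the reason for working with $\Vcal$ here rather than the larger dual $\Tcal(S)^*$.
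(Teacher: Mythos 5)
Your proposal is correct and follows essentially the same route as the paper's proof: establish that $L(\u)\in\Tcal(S)^*$ with $h_{L(\u)}=\Acal h_\u$ via closure of $\Tcal(S)^*$ under $\star$ and Remark~\ref{rem5}, verify the martingale problem through It\^o's formula applied to $h_\u(X_t)$, and then invoke Theorem~\ref{thm:momentpoly} after identifying condition~\eqref{eq:technial} with~\eqref{eq:intcond2}. Your closing remarks on why the enlarged convergence domain $S+B_\e(0)$ in the definition of $\Vcal$ legitimises the termwise manipulations make explicit a point the paper leaves implicit, but the argument is the same.
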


\begin{proof}
Observe first that for each ${\bf u}\in \Ucal$ by definition of $\X$ it holds
$$\Lcal(\langle {\bf u},\fdot\rangle)(\X_t)=\Acal(\gg_{\bf u})(X_t)$$
for each ${\bf u}\in \Vcal$.
Since $\Tcal(S)^*$ is closed with respect to $\star$,  Remark~\ref{rem5} yields $L(\u)\in \Tcal(S)^*$ for each $\u\in \Vcal$ and $\gg_{L(\mathbf u)}=\Acal \gg_{\bf u}=b h'_{\bf u} + \frac{1}{2} a h''_{\bf u} $.
To show that $\Xx$ is a $\Vcal$-polynomial process we then need to verify that $\Xx$ solves the martingale problem for 
$\mathcal{L} (\langle \mathbf{u}, \fdot \rangle)({\bf x}):= \langle L(\mathbf{u}), \mathbf{x} \rangle.$
Since $\mathcal{L}(\langle \mathbf{u}, \fdot \rangle)(\Xx_t)=\Acal(\gg_{{\bf u}})(X_t)$ for each ${\bf u} \in \Vcal$, the claim follows as in the proof of Theorem~\ref{thm1}.
Since condition  \eqref{eq:technial} implies condition \eqref{eq:intcond2}, the claimed representation follows by Theorem~\ref{thm:momentpoly}.
\end{proof}

\begin{remark}\label{rem6}
Observe that the operator $L$ corresponds to the infinite dimensional matrix given by 
$$L_{ij}= \Big(j\b_{i-(j-1)}  1_{\{i\geq j-1\}}+\frac {j(j-1)}2 \a_{i-(j-2)}1_{\{i\geq j-2\}}\Big),$$
or equivalently
\[L=
\begin{pmatrix}
0 & \b_0 & \a_0 & 0 &  \cdots&  &&\\
0 & \b_1 & \a_1+2 \b_0 & 3 \a_0 &  & \cdots&&&\\
0 & \b_2 & \a_2+2 \b_1 & 3 \a_1+3 \b_0  & &\cdots&&\\
0 & \vdots&\vdots&\vdots&&&&\\
0 & \b_i &  \a_i+2 \b_{i-1}  & \cdots & \frac {j(j-1)}2 \a_{i-(j-2)}+j \b_{i-(j-1)}&\cdots  & \frac{(i+1)(i+2)}{2}\a_0 &0& \cdots\\
 0 & \vdots&&&&&&\\
\end{pmatrix}.
\]
Observe also that Remark~\ref{rem10} in this particular framework states that condition \eqref{eq:technial} can be replaced by the following two properties.
\begin{enumerate}
\item The process $(M_t^{{\bf c}(s)})_{t\geq0}$ given by 
$
M_t^{{\bf c}(s)}:=\gg_{{\bf c}(s)}(X_t)-\gg_{{\bf c}(s)}(x_0)-\int_0^t \mathcal{A}\gg_{{\bf c}(s)}(X_r) dr
$
is a true martingale for each $s\in[0,T]$. Since bounded local martingales are martingales, this condition is always satisfied when $S$ is a compact set.
\item
$\int_0^T\int_0^T \E[|\Acal\gg_{{\bm c}(s)}(X_t)|]ds dt<\infty$.
\end{enumerate}
It also states that
\eqref{eqn7} and $\int_0^T|L({\bf c}(s))|_\x ds<\infty$ can be replaced by
\begin{equation}\label{eqn28}
    h_{{\bf c}(t)}(x)=h_\u(x)+\int_0^th_{L({\bf c}(s))}(x)ds
\end{equation}
for each $x\in S$.

    Finally, note that the operator $R$ defined in Theorem~\ref{thm1} and the operator $L$ defined in Theorem~\ref{th:polyinfinite} satisfy the relations described in Proposition~\ref{prop1_affpoly} with the shuffle operator $\shuffle$ replaced by the discrete convolution $\star$. Indeed, for each $\lambda\notin \{0,1\}$ and ${\bf u}\in\Ucal$ we can write
\begin{align*}
\frac \lambda {\lambda-1}R({\bf u})-\frac 1 {\lambda(\lambda-1)}R(\lambda {\bf u})
&=L({\bf u}).
\end{align*}
The same is true for Proposition~\ref{prop2}. Indeed, letting $\exp(\star \u)$ be the coefficients of the power series $\exp(\sum_{k=0}^\infty \u_kx^k)$ we get that
$L(\exp(\star \u))=\exp(\star \u)\star R(\u).$
Moreover, assuming that ${\bf u}\star{\bf u}\in\Ucal$ for each ${\bf u} \in \Ucal$ and using that $({\bf u}\star{\bf u})^{[j]}=2({\bf u}\star{\bf u}^{[j]}+1_{\{j=2\}}{\bf u}^{[1]}\star{\bf u}^{[1]})$, we also get
\begin{align*}
    L({\bf u})+\frac 12 L({\bf u}\star{\bf u})-{\bf u}\star L({\bf u})
&=R({\bf u}).
\end{align*}
\end{remark}

\subsection{Concrete specifications and a ``counterexample''} \label{sec:concrete}

This section is dedicated to considering concrete specifications, where we obtain 
existence of the Riccati and linear ODEs of 
Theorem~\ref{thm1}
and Theorem~\ref{th:polyinfinite}, respectively. We start with Brownian motion, consider then affine processes and finally a rich specification of SDEs on compact state spaces, in general beyond the polynomial class, but including also Jacobi-type diffusions. Each of these specifications comes with a class of entire functions for which the moment formula and the affine transform formula hold true. 

We conclude this section with an example of an SDE with entire drift and diffusion coefficients, for which $x \mapsto \mathbb{E}_x[X_t]$ is not real analytic. This means that the moment formula and the affine transform
formula can only hold asymptotically as outlined in the introduction. Nevertheless, we can find a dense class of functions for which we can prove their exact validity.

Throughout this section we work here in the one-dimensional setup of Section~\ref{sec5} and consider  the power sequence expansion, denoted by 
$\mathbb X_t^{pow}:=(1,X_t,X_t^2,\ldots)$. 

\subsubsection{Brownian motion}

Let $X$ be a one-dimensional Brownian motion (starting at $0$) and denote by $\Acal$ its generator
$\Acal f(x)=\frac 1 2 f''(x)$.
Let $\Bcal$ be the set of entire functions $f:\R\to\C$ with $|f|\leq\exp(a(|\fdot|+1))$ for some $a\in \R$ and set
$\Dcal:=\{ f\in \Bcal \colon f',f''\in \Bcal\}.$
Consider the semigroup $(P_t)_{t\in[0,T]}$ given by $P_tf(x_0)=\E[f(x_0+X_t)]$ for  $f\in \Dcal$.
For this class of functions we get the following results whose proof can be found in Appendix~\ref{appB1}.

\begin{lemma}\label{lem111}
The following holds. 
\begin{enumerate}
\item\label{itnew1} $P_t$ maps $\Dcal$ to itself for each $t\in[0,T]$. 
\item\label{itnew2} The map $t\mapsto P_tf$ is continuous for each $f\in \Bcal$.
\item\label{itnew3} $\int_0^t P_s|g|(x_0) ds<\infty$
for each $t>0$, $x_0\in \R$, and $g\in \Bcal$.
\item\label{itnew4} For each $f\in \Dcal$ the map $s\mapsto P_{s}f(x_0)$ is continuously differentiable and it holds 
$\partial_s P_{s}f(x_0)= P_s\Acal f(x_0)
=\Acal P_sf(x_0).$
\end{enumerate}
\end{lemma}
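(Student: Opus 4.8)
The plan is to run everything off the explicit Gaussian representation
\begin{equation*}
P_sf(x_0)=\int_\R f(x_0+y)\,p_s(y)\,dy,\qquad p_s(y)=\tfrac{1}{\sqrt{2\pi s}}\,e^{-y^2/(2s)}\quad(s>0),\ P_0=\mathrm{id},
\end{equation*}
together with one master estimate: for $f\in\Bcal$ with $|f(x)|\le e^{a(|x|+1)}$ one has $P_s|f|(x_0)\le e^{a(|x_0|+1)}\,\E[e^{a|X_s|}]\le 2\,e^{a(|x_0|+1)}e^{a^2s/2}$, which is finite and locally bounded in $s\in[0,T]$. This settles three parts at once. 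For \ref{itnew1} I would note that $z\mapsto\int_\R f(z+y)p_s(y)\,dy$ is entire (differentiate under the integral, dominating locally through the master estimate), the estimate gives the growth bound so $P_sf\in\Bcal$, and differentiating once under the integral yields $(P_sf)'=P_sf'$ with $f'\in\Bcal$, whence $P_sf'\in\Bcal$ and $P_sf\in\Dcal$. For \ref{itnew2} I would write $P_tf(x_0)=\E[f(x_0+\sqrt t\,Z)]$, $Z\sim N(0,1)$, and dominate by $e^{a(|x_0|+\sqrt T|Z|+1)}$, so dominated convergence gives continuity of $t\mapsto P_tf(x_0)$ on $[0,T]$, $t=0$ included. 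For \ref{itnew3} I would simply integrate the master estimate, $\int_0^tP_s|g|(x_0)\,ds\le 2e^{a(|x_0|+1)}\int_0^t e^{a^2s/2}\,ds<\infty$.

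The real content is \ref{itnew4}, and for $s>0$ I would transfer the spatial derivatives onto the Schwartz kernel: since $f$ grows at most exponentially, $P_sf\in C^\infty$ with $(P_sf)''(x_0)=\int_\R f(x_0+y)p_s''(y)\,dy$, and differentiation in $s$ under the integral is legitimate for $s>0$. The heat equation $\partial_s p_s=\tfrac12 p_s''$ then gives, for every $s>0$,
\begin{equation*}
\partial_s P_sf(x_0)=\tfrac12\int_\R f(x_0+y)p_s''(y)\,dy=\tfrac12(P_sf)''(x_0)=\Acal P_sf(x_0),
\end{equation*}
and the same dominated-convergence reasoning shows $s\mapsto\Acal P_sf(x_0)$ is continuous on $(0,T]$.

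To pass to $[0,T]$ and obtain the $C^1$ identity one needs $s\mapsto\Acal P_sf(x_0)$ to be integrable near $0$, and here the only genuine difficulty appears: membership $f\in\Dcal$ controls $f$ and $f'$ but gives \emph{no} growth bound on $f''$ (for instance $f'=\cos(e^{(\cdot)^2})$ is bounded, its antiderivative $f$ is bounded, so $f\in\Dcal$, yet $f''$ grows super-exponentially), so the naive It\^o expansion with integrand $f''(x_0+X_r)$ together with Fubini is unavailable. The remedy is to keep exactly one derivative off $f$: integrating by parts against the kernel,
\begin{equation*}
(P_sf)''(x_0)=-\int_\R f'(x_0+y)p_s'(y)\,dy=\frac1s\,\E[X_s\,f'(x_0+X_s)],
\end{equation*}
which uses only $f'\in\Bcal$ and, through $\E[|X_s|e^{a|X_s|}]=O(\sqrt s)$, yields the integrable bound $|\Acal P_sf(x_0)|\le C_{T,x_0}\,s^{-1/2}$ (placing both derivatives on the kernel would only give the non-integrable rate $s^{-1}$). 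Hence $\int_0^T|\Acal P_sf(x_0)|\,ds<\infty$, and applying the fundamental theorem of calculus on $[\varepsilon,t]$ and letting $\varepsilon\downarrow0$ with \ref{itnew2} gives
\begin{equation*}
P_tf(x_0)=f(x_0)+\int_0^t\Acal P_sf(x_0)\,ds,\qquad t\in[0,T],
\end{equation*}
from which, with continuity of the integrand on $(0,T]$, the continuous differentiability and the formula $\partial_sP_sf(x_0)=\Acal P_sf(x_0)$ follow.

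The step I expect to be genuinely delicate is the right endpoint $s=0$, i.e.\ that the derivative is continuous there with value $\Acal f(x_0)=\tfrac12 f''(x_0)$; equivalently $(P_sf)''(x_0)\to f''(x_0)$, equivalently the generator limit $s^{-1}(P_sf(x_0)-f(x_0))\to\Acal f(x_0)$. Writing $(P_sf)''(x_0)=\E[Z\,(f'(x_0+\sqrt sZ)-f'(x_0))/\sqrt s]$, the contribution of $\{|Z|\le M\}$ converges to $f''(x_0)\,\E[Z^2\mathbf 1_{\{|Z|\le M\}}]$ by the mean value theorem and local boundedness of the entire $f''$; the tail $\{|Z|>M\}$, however, cannot be dominated crudely (again because $f''$ is uncontrolled and the prefactor $s^{-1/2}$ does not decay) and must be shown negligible by exploiting the cancellation produced by the odd factor $Z$ against the even Gaussian, e.g.\ through a second careful Taylor expansion of $f'$ to first order with an $f'$-based remainder. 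This oscillation/cancellation argument is where the main work lies; all remaining estimates are routine consequences of the master estimate.
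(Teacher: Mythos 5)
Parts \ref{itnew1}--\ref{itnew3} and the $s>0$ portion of \ref{itnew4} are essentially correct, and your route through \ref{itnew4} is genuinely different from the paper's: you work with the heat kernel directly, moving exactly one derivative onto the kernel to get $(P_sf)''(x_0)=s^{-1}\E\left[X_s f'(x_0+X_s)\right]$ and hence the integrable bound $O(s^{-1/2})$ using only $f'\in\Bcal$, whereas the paper applies It\^o's formula to $f(X_t)$, shows $\int_0^\cdot f'(X_s)\,dX_s$ is a true $L^2$-martingale via $(f')^2\in\Bcal$ and \ref{itnew3}, takes expectations to obtain $P_tf(x_0)-f(x_0)=\int_0^tP_s\Acal f(x_0)\,ds$, and then differentiates through the semigroup identity $\partial_sP_sf=\partial_t P_tP_sf|_{t=0}$ combined with \ref{itnew1} and \ref{itnew2}. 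One small caveat on your \ref{itnew1}: the hypothesis $|f|\le e^{a(|x|+1)}$ controls $f$ only on the real axis, so entirety of $P_sf$ should be obtained by putting the complex variable into the kernel (i.e.\ $z\mapsto\int f(y)p_s(y-z)\,dy$, or the paper's power-series expansion of $\exp(xx_0/t)$), not by writing $f(z+y)$ with $z$ complex.

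The genuine gap is the one you flag yourself: you prove differentiability with the correct derivative and continuity of that derivative only on $(0,T]$, together with the integral identity $P_tf(x_0)=f(x_0)+\int_0^t\Acal P_sf(x_0)\,ds$, but the statement requires continuous differentiability on $[0,T]$, and the endpoint claim $\lim_{s\downarrow0}\Acal P_sf(x_0)=\Acal f(x_0)$ is only described ("where the main work lies"), not proved. Identifying the obstacle is not the same as overcoming it. The step can be closed along the lines you sketch: write $(P_sf)''(x_0)=s^{-1}\E\left[X_s\big(f'(x_0+X_s)-f'(x_0)\big)\right]$ and split at $|X_s|\le M(s)\sqrt s$ with $M(s)=2\sqrt{\log(1/s)}$; on the tail, Cauchy--Schwarz and the exponential bound on $f'$ give a contribution $O\big(s^{-1/2}e^{-M(s)^2/4}\big)=O(\sqrt s)$, while on the bulk the identity $f'(x_0+y)-f'(x_0)-yf''(x_0)=\int_0^y\big(f''(x_0+u)-f''(x_0)\big)\,du$ yields an error of order $\omega\big(M(s)\sqrt s\big)\to0$, with $\omega$ the local modulus of continuity of $f''$ at $x_0$. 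Your observation that $f\in\Dcal$ imposes no growth bound on $f''$ is correct and pertinent (e.g.\ $f'=\sin(e^{e^x})$); note that it equally affects the paper's own argument, whose Fubini step and $s=0$ endpoint implicitly presuppose $f''\in\Bcal$, so the difficulty you isolate is real, but your proposal leaves it unresolved.
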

Observe that the Riccati operator and the linear operator in the Brownian case  are
\begin{align*}
R^{pow}(\u)_k &= \frac{1}{2} \Big((k+1)(k+2)\u_{k+2} + \sum_{i+j=k}(i+1)(j+1) \u_{i+1}\u_{j+1}\Big),\\
L^{pow}(\u)_k &= \frac{1}{2} (k+1)(k+2)\u_{k+2}.
\end{align*}
We now provide an existence result for solutions of the corresponding Riccati and linear ODEs with respect to the power sequence expansion. A similar result for the signature representation of Section~\ref{sec4} can be proved analogously. Recall that we use the notation $\gg_{\bf u}(x):=\sum_{n=0}^\infty {\bf u}_nx^n$ and that 
by Lemma~\ref{lem111}\ref{itnew1} for each $f\in \Dcal$ the map $P_tf$ is entire.

\begin{proposition}\label{prop1}
Fix $\Ucal^{pow}:=\{\u\in T((\R))\colon h_\u\in\Dcal\}$ and  $\u\in \Ucal^{pow}$. Let ${\bf c}(t)$ denote the coefficients of $P_th_\u$, meaning that $P_th_\u=h_{{\bf c}(t)}$. {Then $({\bf c}(t))_{t\in[0,T]}$ solves the linear ODE   \eqref{eqn28} for each $x\in S$.}

Fix $\Wcal^{pow}:=\{\u\in T((\R))\colon \exp(h_\u)\in\Dcal\}$,  $\u\in \Wcal^{pow}$, and set $f(x):=\exp(h_\u(x))$. Let
${\bm \psi(t)}$ be the coefficients of the expansion around 0 of $x\mapsto\log(P_tf(x)),$
meaning that $\log(P_tf(x))=h_{{\bm \psi(t)}}(x)$ for all $|x|$ small enough.
 {Then $({\bm \psi}(t))_{t\in[0,T]}$ solves the Riccati ODE  \eqref{eqn32} for all $x$ in the domain of convergence of each $h_{\bm\psi(t)}$.}
\end{proposition}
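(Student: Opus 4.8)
The plan is to treat the two assertions in turn, establishing the linear (polynomial) statement first and then deducing the Riccati (affine) one from it through the combining device of Theorem~\ref{eq:combining}. Throughout I use that for Brownian motion $b=0$ and $a=1$, so that $h_{L(\mathbf v)}=\Acal h_{\mathbf v}=\tfrac12 h_{\mathbf v}''$ and $h_{R(\mathbf v)}=\tfrac12\big(h_{\mathbf v}''+(h_{\mathbf v}')^2\big)$, the relations recorded in the proofs of Theorem~\ref{th:polyinfinite} and Theorem~\ref{thm1}.

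For the linear ODE, I would identify $\mathbf c(t)$ with the Taylor coefficients of the entire function $P_t h_{\mathbf u}$, so that $h_{\mathbf c(t)}=P_t h_{\mathbf u}$. By Lemma~\ref{lem111}\ref{itnew1} this function stays in $\Dcal$, whence $\mathbf c(t)\in\Ucal^{pow}$ and $L(\mathbf c(t))$ is well defined. Lemma~\ref{lem111}\ref{itnew4} then gives, for every $x$,
\[
\partial_t h_{\mathbf c(t)}(x)=\Acal P_t h_{\mathbf u}(x)=h_{L(\mathbf c(t))}(x),
\]
and integrating in $t$ produces the functional identity \eqref{eqn28}, the integrability of $s\mapsto h_{L(\mathbf c(s))}(x)$ being supplied by Lemma~\ref{lem111}\ref{itnew3}. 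To upgrade \eqref{eqn28} to the coefficientwise ODE \eqref{eqn7}, I would extract the $n$-th coefficient through Cauchy's formula $\mathbf c_n(t)=\frac{1}{2\pi i}\oint_{|z|=r}z^{-n-1}h_{\mathbf c(t)}(z)\,dz$ and differentiate under the contour integral, the local-uniform control of $\partial_t h_{\mathbf c(t)}$ on $\{|z|=r\}$ coming from the growth bound defining $\Bcal$ together with Lemma~\ref{lem111}\ref{itnew2}; alternatively one invokes the equivalence of \eqref{eqn7} and \eqref{eqn28} from Remark~\ref{rem6}, after checking $\int_0^T|L(\mathbf c(s))|_{\mathbf x}\,ds<\infty$ again via Lemma~\ref{lem111}\ref{itnew3}. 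I expect this interchange of coefficient extraction with the time integration to be the only genuinely technical step of the first part.

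For the Riccati ODE I would reduce to the linear case just proved. Since $\mathbf u\in\Wcal^{pow}$ means $f=\exp(h_{\mathbf u})=h_{\exp(\star\mathbf u)}\in\Dcal$, the element $\exp(\star\mathbf u)$ lies in $\Ucal^{pow}$, and the first part applies with this initial datum: the coefficients $\mathbf c(t)$ of $P_t f$ solve \eqref{eqn7} with $\mathbf c(0)=\exp(\star\mathbf u)$ and satisfy \eqref{eqn28}. The hypothesis $P_t f(0)=\mathbf c(t)_\emptyset\neq0$ makes $\mathbf d(t):=\mathbf c(t)/\mathbf c(t)_\emptyset$ and its convolution logarithm well defined, and the one-dimensional analogue of Theorem~\ref{eq:combining} (with $\star$ in place of $\shuffle$, whose algebraic ingredients are recorded in the remark following Theorem~\ref{th:polyinfinite}) shows that $\bm\psi(t)$, with $\bm\psi(t)_\emptyset=\mathbf u_\emptyset+\int_0^t \mathbf c(s)_\emptyset^{-1}L\mathbf c(s)_\emptyset\,ds$ and $\overline{\bm\psi(t)}=\log(\star\,\mathbf d(t))$, solves \eqref{eqn7aff}. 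Matching with the data in the statement is then a computation: $L\mathbf c(s)_\emptyset=\Acal P_s f(0)$ and $\mathbf c(s)_\emptyset=P_s f(0)$, so the constant component is exactly the stated $\bm\psi(t)_\emptyset$.

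Finally, for the transport identity \eqref{eqn32} the cleanest route is to verify directly that $h_{\bm\psi(t)}=\log P_t f$ on the region where $P_t f>0$. Writing $g_t:=\log P_t f$ and using $\Acal e^{g}=\tfrac12 e^{g}\big(g''+(g')^2\big)$ together with the form of $h_{R(\bm\psi(t))}$ above, Lemma~\ref{lem111}\ref{itnew4} gives
\[
\partial_t g_t=\frac{\Acal P_t f}{P_t f}=\tfrac12\big(g_t''+(g_t')^2\big)=h_{R(\bm\psi(t))},
\]
and integrating yields \eqref{eqn32} for $x$ in the common domain of convergence of the $h_{\bm\psi(t)}$. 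The main obstacle in this second part is the bookkeeping around the constant coefficient: one must track $\bm\psi(t)_\emptyset$ separately, since it carries the drift $\Acal P_s f(0)/P_s f(0)$, and reconcile the two descriptions of the non-constant part $\overline{\bm\psi(t)}$ — namely $\log(\star\,\mathbf d(t))$ coming from the combining construction and the expansion of the logarithm written in the statement — paying careful attention to the normalisation by $P_t f(0)$ that distinguishes $\log\!\big(P_t f/P_t f(0)\big)$ from the logarithm of the unnormalised series.
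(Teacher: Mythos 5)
Your first part is correct and is essentially the paper's argument: the paper disposes of it in one sentence by invoking Lemma~\ref{lem111}\ref{itnew4} together with $\Acal h_{\mathbf v}=h_{L(\mathbf v)}$, and your additional step of extracting the coefficientwise ODE \eqref{eqn7} from the functional identity \eqref{eqn28} (via Cauchy's formula, or via the equivalence recorded in Remark~\ref{rem6}) supplies a detail the paper leaves implicit but which is genuinely needed. For the second part, your detour through the one-dimensional analogue of Theorem~\ref{eq:combining} is a different route from the paper, which instead verifies \eqref{eqn32} directly through the single chain $\partial_t\exp(h_{\bm\psi(t)}(x))=\partial_tP_tf(x)=\Acal P_tf(x)=\exp(h_{\bm\psi(t)}(x))\,h_{R(\bm\psi(t))}(x)$ and then cancels the nonzero factor $\exp(h_{\bm\psi(t)}(x))$; your final paragraph reproduces exactly this computation, so the combining-theorem step is, strictly speaking, redundant once the direct verification is in place.

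The one genuine gap is the point you flag but do not close: the reconciliation of the two descriptions of $\overline{\bm\psi(t)}$. Both branches of your argument produce the non-constant Taylor coefficients of $\log\bigl(P_tf(x)/P_tf(0)\bigr)$ --- equivalently they need $h_{\bm\psi(t)}=\log P_tf$, which holds since $\bm\psi(t)_\emptyset=\log P_tf(0)$ --- whereas the statement defines $\overline{\bm\psi(t)}$ through $\log\bigl(P_tf(x)+(1-P_tf(0))\bigr)$. These are different power series whenever $P_tf(0)\neq1$: already the degree-one coefficients are $(P_tf)'(0)/P_tf(0)$ versus $(P_tf)'(0)$. Moreover, with the additive normalisation of the statement one computes $\tfrac12\bigl(h''+(h')^2\bigr)=\Acal P_tf(x)/\bigl(P_tf(x)+1-P_tf(0)\bigr)$, while $\partial_t h_{\bm\psi(t)}(x)$ carries the extra term $\Acal P_tf(0)/P_tf(0)-\Acal P_tf(0)/\bigl(P_tf(x)+1-P_tf(0)\bigr)$, so \eqref{eqn32} would actually fail. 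The paper's own proof silently uses the multiplicative normalisation (it writes $\partial_tP_tf(x)=\exp(h_{\bm\psi(t)}(x))\,\partial_th_{\bm\psi(t)}(x)$, i.e.\ $P_tf=\exp(h_{\bm\psi(t)})$), so the ``$+(1-P_tf(0))$'' in the statement should be read as division by $P_tf(0)$; you should either adopt that reading explicitly or record the discrepancy, because as written your proof establishes the proposition only for the multiplicatively normalised $\overline{\bm\psi(t)}$.
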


\begin{remark}\label{rem:notentire}
Observe that for notational simplicity we implicitly assume that $h_\u$ in the definition of $\Wcal^{pow}$ is entire. This is however not needed as we could define
$$\Wcal^{pow}:=\{\u\in T((\R))\colon h_\u(x)=\log f(x)\text{ for }f\in\Dcal  \text{ and $x$ small enough}\}.$$
\end{remark}

\begin{proof}
The first statement follows by Lemma~\ref{lem111}\ref{itnew4} and the observation that $\Acal h_\u=h_{L(\u)}$ for each $\u\in \Ucal$. For the second statement, observe that for each $x$ in the convergence domain of $h_{\bm\psi(t)}(x)$  by Lemma~\ref{lem111}\ref{itnew4} we get
$
\exp(h_{{\bm \psi}(t)}(x)) \partial_th_{{\bm \psi}(t)}(x)
=\partial_t P_{t}f(x)
=\Acal P_tf(x)
=\exp(h_{{\bm \psi}(t)}(x)) h_{R({\bm \psi}(t))}(x).$
\end{proof}
\begin{remark}
    Observe that Proposition~\ref{prop1} just provides sufficient conditions for the existence of solutions of the linear ODE and the Riccati ODE, respectively. Additionally, verifying condition \eqref{eq:technial} would permit to apply Theorem \eqref{th:polyinfinite}. 
 For fixed $\u\in\Ucal^{pow}$, by Remark~\ref{rem6} and the definition of ${\bf c}(t)$, for the moment formula it suffices to check that
    $\int_0^T P_s|\Acal  h_\u|(x)ds<\infty$ and $\int_0^T\int_0^TP_{t}P_s|\Acal h_\u|(x)dsdt<\infty$ for all $x \in S$. By the definition of $\Dcal$ and Lemma~\ref{lem111}\ref{itnew3} this is the case.

    To  apply Theorem   \eqref{thm2} we would instead first need to check that \eqref{eqn32} holds for each $x\in S$ and then to verify \eqref{eqn14}. This is clearly more involved.  Applying one of these theorems would guarantee that the constructed solutions are the only solutions satisfying these technical conditions. 
\end{remark}

\subsubsection{Affine processes}

In the case of affine processes we can identify another family of entire functions for which we get existence of the Riccati and linear ODEs. Similarly to the Brownian case, we provide this for the power sequence expansion, but a similar result also holds for the signature expansion. 

The next proposition 
is an adaptation of Proposition 3.33
in  \cite{CPS:24}, where the statement can be found in the more general setup of  affine jump diffusions. Since the assumptions on the function class as well as on the affine process are slightly different, we also provide here the essential parts of the proof.

\begin{proposition}\label{prop:affine}
      Let $X=(X_t)_{t\in[0,T]}$ be an affine diffusion on $\mathbb{R}$, with state space $S=\mathbb{R}_+$ or $\mathbb{R}$, starting at $X_0=x_0$.    Let $\mathcal{E}$ be the set of entire functions 
    $h$ that can be bounded on $\mathbb{R}$ by
    $$|h(x)| \leq C(1 +|x|^n)$$ for some $n \in \mathbb{N}$. Define
    \begin{align}\label{eq:Dcal,Upow}
    \Dcal&:=\{h \in \mathcal{E}\colon h',h'' \in \mathcal{E} \}, \quad \text{and} \quad \Ucal^{pow}=\{\u\in T((\R))\colon h_\u\in\Dcal\}.
\end{align}
    Fix $\u\in \Ucal^{pow}$ and  assume further that $h_\u(x)=\int_{-\tau}^\tau \exp((\varepsilon+iu)x)g(u) du$ for some $g:[-\tau,\tau]\to \R$ with $\int_{-\tau}^\tau |g(u)|du<\infty$, $\varepsilon,\tau\in[0,\infty)$ and 
    
 $$\E[\exp(\varepsilon X_T)] < \infty.$$
 Then there exists a $\Ucal^{pow}$-valued solution $({\bf c}(t))_{t\in[0,T]}
$ of \eqref{eqn7} with initial condition $\u$ that also satisfies \eqref{eqn28} for each $x \in S$ and the moment formula holds true, i.e.,
    \begin{align*}
\E[\gg_{{\mathbf u}}(X_T)]=\gg_{{\mathbf c}(T)}(x_0).
\end{align*}
Moreover, if $\exp(h_{\v})$ for some entire function $h_{\v}$ satisfies the above conditions, then there also exists a solution to the Riccati equation \eqref{eqn7aff}
with ${\bm \psi}(0)=\v$.
\end{proposition}

\begin{remark}
As in the case of Brownian motion  $h_{\v}$ does not need to be entire in the last statement, but we can rather
use $\v \in \mathcal{W}^{pow}$,
similarly as in Remark \ref{rem:notentire}.
\end{remark}

\begin{proof}
By Lemma \ref{lemaffine} and Lemma 3.22 of \cite{CPS:24} it holds that 
$P_t\mathcal{A}h_{\u}=\mathcal{A}P_th_{\u}$. By the martingality of $N^{h_{\u}}$ as proved in  Lemma \ref{lemaffine}\ref{itc2}  and the integrability proved in Lemma~\ref{lemaffine}\ref{itc3}, an application of Fubini yields
\[
P_t h_{\u}(x_0)-h_{\u}(x_0)-\int_0^t P_s\mathcal{A}h_{\u}(x_0) ds=P_t h_{\u}(x_0)-h_{\u}(x_0)-\int_0^t \mathcal{A}P_sh_{\u}(x_0) ds=0,
\]
 and thus by Lemma \ref{lemaffine}\ref{itc3} also $\partial_t P_t h_{\u}=\mathcal{A}P_th_{\u}$. Moreover, by  
Lemma \ref{lemaffine}\ref{itc1} $P_th_{\u}$ is entire and since $\mathcal{A}h_{\u}= h_{L(\u)}$, the coefficients of $P_th_{\u}$, meaning $P_th_{\u}=h_{{\bf c}(t)}$, solve the linear ODE \eqref{eqn7} such that \eqref{eqn28} is satisfied for every $x \in S$. This proves the existence of a solution. To conclude that the moment formula holds true it remains to show that for any  solution of \eqref{eqn7} with $\int_0^T|L({\bf c}(s))|_\x ds<\infty$ we have   $
\E[\gg_{{\mathbf u}}(X_T)]=\gg_{{\mathbf c}(T)}(x_0)$.
This is a consequence of Lemma~\ref{lemaffine}\ref{itc4} and Remark~3.23 in \cite{CPS:24}.

The assertion concerning the Riccati equations follows from an adaptation of Theorem~\ref{eq:combining} to the current setup, as stated precisely in Proposition 4.15 in \cite{CPS:24}, where the coefficients of ${\bm \psi}$ are derived from ${\mathbf c}$.
\end{proof}

The essential parts for proving Proposition \ref{prop:affine}
are summarized in the next lemma, whose proof can be found in Appendix~\ref{appB2}.

\begin{lemma}\label{lemaffine}
Let  the conditions of Proposition \ref{prop:affine} on the function $h_{\u}$ and the affine process $X$ be satisfied. Denote its infinitesimal generator by $\mathcal{A}$ and consider the associated semigroup $(P_t)_{t \in [0,T]}$ given by $P_t h_{\u}(x_0)=\mathbb{E}[h_{\u}(X_t)]$ with $X$ starting at $X_0=x_0$. Then the following assertions hold true:
\begin{enumerate}
\item\label{itc1} $P_th_{\u}$  is an entire function.
\item\label{itc2}  The process $N^{h_{\u}}_t=h_{\u}(X_t)-h_{\u}(x_0)- \int_0^t \mathcal{A}h_{\u}(X_s)ds$ for $t \in [0,T]$
is a true martingale.
\item\label{itc3} The map $x \mapsto P_t\mathcal{A}h_{\u}(x)$ is continuous for every $t \in [0,T]$.
\item\label{itc4}  $\int_0^T P_s|\Acal  h_\u|(x)ds<\infty$ and $\int_0^T\int_0^TP_{t}P_s|\Acal h_\u|(x)dsdt<\infty$ for all $x \in S$.
\end{enumerate}
\end{lemma}
{\subsubsection{Diffusion processes on $\mathbb{R}$ with real analytic coefficients} \label{sec:elliptic}

We now consider generic SDEs of the form \eqref{eq:SDE1d_new}, where $b$ and $a$ are just real analytic functions. To prove that the moment and the affine transform formula hold, we consider the case of uniform ellipticity. Thus, the state space is necessarily  the whole of $\mathbb{R}$. However, in contrast to the conditions below \eqref{eqn4new}, we do not require $a$ and $b$ to be entire but just real-analytic. We then get the following version of the  moment and the affine transform formula. For its proof we mainly use existence and uniqueness assertions of parabolic equations and the identity theorem for analytic functions.

We use here the same notation as in the rest of the paper, with the only difference that $h_{\u}: \mathbb{R} \to \mathbb{C}$ now denotes a real analytic (instead of entire) function with Taylor coefficients at $0$ equal to $\u$.

\begin{theorem}\label{thm_real_analytic}

Consider an SDE of the form \eqref{eq:SDE1d_new} with initial condition $X_0=x_0$,
where $a,b:\mathbb R\to\mathbb R$ are real-analytic such that $a,b$ and $\partial_x a$  are bounded. Assume, furthermore, the following uniform ellipticity condition
\[
a(x)\ge \delta >0 \qquad \text{for all } x\in\mathbb R.
\]
Let $h_{\u}: \mathbb{R} \to \mathbb{C}$ be a real analytic function of polynomial growth (on $\mathbb{R}$) such that $\mathcal{A} h_{\u} $ also has polynomial growth.
Fix $t>0$, $\bar x\in\mathbb R$, and define
\[
U(t,x_0) =  \mathbb{E}\big[h_{\u}(X_t-\bar x)\mid X_0=x_0\big].
\]
Then the following assertions hold true.
\begin{enumerate}
\item\label{Exit1} The map $x\mapsto U(t,x)$ is real analytic on $\mathbb R$, too. In particular, there exists a radius
$r=r(t,\bar x)>0$ such that for all $|x_0-\bar x|<r$,
\[
U(t,x_0)=  \mathbb{E}\big[h_{\u}(X_t-\bar x)\mid X_0=x_0\big]=\sum_{n=0}^\infty {\bf c}_n(t,\bar x)\,(x_0-\bar x)^n,
\]
with Taylor coefficients at $0$
${\bf c}_n(t,\bar x)=\frac{1}{n!}\partial_x^n U(t,\bar x)$. Those coefficients identify $U$ uniquely.

\item\label{Exit2} Let ${\bf a}(\bar x)=({\bf a}_n(\bar x))_{n \geq 0}$ and ${\bf b}(\bar x)=({\bf b}_n(\bar x))_{n \geq 0}$ denote the coefficients of the Taylor series of
$a(\cdot+\bar x)$ and $b(\cdot+\bar x)$ at $0$, i.e.,
\[
a(x+\bar x)=\sum_{n=0}^\infty {\bf a}_n(\bar x)x^n,
\qquad
b(x+\bar x)=\sum_{n=0}^\infty {\bf b}_n(\bar x)x^n.
\]
Define the linear operator $L(\bar x)$ by
\begin{equation*}
L(\bar x){\bf u}={\bf b}(\bar x)\star {\bf u}^{[1]}+\frac 1 2 {\bf a}(\bar x)\star{\bf u}^{[2]}.
\end{equation*}

Then the coefficient vector ${\bf c}(t, \bar x)=({\bf c}_n(t,\bar x))_{n\ge0}$ satisfies the
infinite-dimensional linear ODE
\[
\partial_t {\bf c}(t,\bar x) = L(\bar x){\bf c}(t,\bar x),
\text{ for all } t\geq 0, \text{ where }
{\bf c}(0,\bar x)=\bf u.
\]
\item\label{Exit3} Denote the sequence space
$$
\mathcal{W} = \operatorname{Pow} \Big (\big \{ f \text{ real analytic on } \mathbb{R} \text{ such that $f$ and $\mathcal{A}f$ are of polynomial growth on $\mathbb{R}$} \big \} \Big ) \, ,
$$
of all Taylor coefficients $\u$ for functions $h_{\u}$ such that there is a real analytic continuation $ f $ on $\mathbb{R}$ which together with $ \mathcal{A} f$ (the analytic continuation of $\mathcal{A}h_{\u}$) is of polynomial growth. The linear map $\operatorname{Pow}$ maps a function to its power series coefficients, i.e.~$\operatorname{Pow}(h_\u)=\u $.

Then the linear ODE system is well posed within the class of coefficient sequences in $\mathcal{W}$.

\item\label{Exit4} Let $h_{\v}$ locally uniquely represent a real analytic function such that $ \exp(h_{\v}) \in \mathcal{W} $.
Assume furthermore that $\mathbb{E}[\exp(h_{\v}(X_t-\bar x))\mid X_0=\bar x] \neq 0$ for all $t \in [0,T]$ at $ \bar x$. Then, there exists a radius $r>0$ (which can depend on time $t$ and $\bar x$) such that for all $|x_0-\bar x|<r$,
\[
U(t,x_0)=  \mathbb{E}\big[\exp(h_{\v}(X_t-\bar x))\mid X_0=x_0\big]=\exp\left(\sum_{n=0}^\infty {\bm \psi}_n(t,\bar x)\,(x_0-\bar x)^n \right),
\]
where ${\bm \psi}$ solves the Riccati equation
\[
\partial_t {\bm \psi}(t,\bar x)=R(\bar x) \big ( {\bm \psi}(t,\bar x) \big), \qquad t \leq T, \qquad \psi(0,\bar x)=\v,
\]
for $t \in [0,T] $ with 
$
R(\bar x)(\u)={\bf b}(\bar x)\star {\bf u}^{[1]}+\frac 1 2 {\bf a}(\bar x)\star\big({\bf u}^{[2]}+{\bf u}^{[1]}\star{\bf u}^{[1]}\big).
$
This Riccati ODE system is locally (in time) well posed as long as taking consistent logarithms is justified and the logarithm remains in $\mathcal{W}$ in space.
\end{enumerate}
\end{theorem}

\begin{proof}
\ref{Exit1}
Since $h_{\u}$ has polynomial growth and the diffusion coefficients satisfy the 
linear growth condition, standard moment estimates imply
$ \mathbb{E}[h_{\u}(X_t-\bar x)| X_0=x_0]<\infty$.
Thus $U(t,x_0)= \mathbb{E}[h_{\u}(X_t-\bar x)| X_0=x_0]$ is well defined
and solves the Kolmogorov equation
\[
\partial_t U(t,x)
=
b(x)\partial_x U(t,x)+\tfrac12 a(x)\partial_{xx}U(t,x),
\qquad
U(0,x)=h_{\u}(x-\bar x).
\]
By uniform ellipticity, the real analytic coefficients and the respective boundedness conditions on $a, b$ and $\partial_x a$, the function $x\mapsto U(t,x)$ is real analytic on $\mathbb{R}$ for every $t \leq T$. This is a direct consequence of \cite [Theorem 1.1]{Takac2012}. 
Therefore, for each $\bar x$ there exists $r=r(t,\bar x)>0$ such that
\[
U(t,x)=\sum_{n=0}^\infty c_n(t,\bar x)(x-\bar x)^n
\quad\text{for }|x-\bar x|<r
\]
with $c_n(t,\bar x)$ being the coefficients of the Taylor series corresponding to $U(t, \cdot)$ at $\bar x$.
This proves the first claim.

\ref{Exit2} To prove the second part, define 
$w(t,\xi)=U(t,\bar x+\xi)=  \sum_{n=0}^\infty {\bf c}_n(t,\bar x)\xi^n$. Then
\begin{equation}\label{eq:linear_xi}
\partial_t w(t,\xi)
=
b(\bar x+\xi)\partial_\xi w(t,\xi)
+\frac 12 a(\bar x+\xi)\partial_{\xi\xi}w(t,\xi),
\qquad
w(0,\xi)=h_{\u}(\xi).
\end{equation}
Expanding all terms into the respective power series around $\xi=0$ and matching the coefficients, yields, for each $n\ge0$,
\[
\partial_t {\bf c}_n(t,\bar x)= \sum_{n_1+n_2=n}{\bf b}_{n_1}(\bar x) {\bf c}_{n_2}^{[1]}(t,\bar x)+\frac 1 2 \sum_{n_1+n_2=n} {\bf a}_{n_1}(\bar x){\bf c}_{n_2}^{[2]}(t,\bar x)
 \]
with initial condition ${\bf c}_n(0,\bar x)=\u_n$. By the definition of the $\star$ product as discrete convolution, the claim follows.

\ref{Exit3} 
By injectivity of the power series coefficient map $ h_{\u} \mapsto \u$ we can lift the solution semigroup, which acts on real analytic functions with polynomial growth, such that also $\mathcal{A} h_{\u} $ has polynomial growth, to the sequence space $\mathcal{W}$: it constitutes a smooth semigroup of linear maps which we denote by $(Q_t)$, in the sense that $(Q_t \u)_n$, i.e.~the $n!$ times the $n$th derivative of $P_t h_\u$ at $\bar x$, is smooth in $t$ for $ n \geq 0$. This yields existence on $\mathcal{W}$.

Let us now assume that $d:\mathbb{R}_+ \to \mathcal{W}$ is another differentiable (componentwise) solution of the linear system with initial value $ d(0) \in \mathcal{W}$. Fix $T>0$ and consider
$$
t \mapsto Q_{T-t} d(t) \, .
$$
We now prove that this map is constant by transferring it back to real analytic functions. Indeed
$$
t \mapsto \mathbb{E}\big[h_{d(t)}(X_{T-t}-\bar x)\mid X_0=x_0\big]
$$
has vanishing derivative since $ \frac{d}{dt} h_{d(t)} = \mathcal{A} h_{d(t)}$ for $ 0 \leq t \leq T$ by assumption on $d$. This proves uniqueness.

\ref{Exit4}
As $U(t,\bar x)= \mathbb{E}[\exp(h_{\v}(X_t-\bar x))| X_0=\bar x] \neq 0$ for all $t \in [0,T]$ by assumption, it follows from continuity that there exists some $\bar r >0$ such that
\[
U(t,x)\neq 0 \qquad \text{for all } |x-\bar x|<\bar r  \text{ and } t \in [0,T].
\]
Thus, on  $(\bar x-\bar r,\bar x+\bar r)$ we  may choose a continuous branch of
the complex logarithm and define
\[
V(t,x):=\log(U(t,x)).
\]
Since $U(t,\cdot)$ is real analytic by the same arguments as above,  the function $V(t,\cdot)$ is real analytic on $(\bar x-\bar r,\bar x+\bar r)$. Therefore $V(t,\cdot)$ admits a convergent Taylor expansion around $\bar x$, such that
\[
V(t,x)=\sum_{n=0}^\infty {\bm \psi}_n(t,\bar x)\,(x-\bar x)^n,
\]
for a small neighborhood of $\bar x$ and for uniquely determined coefficients ${\bm \psi}_n(t,\bar x)\in\mathbb C$. Since $U=\exp(V)$ on
$(\bar x-\bar r,\bar x+\bar r)$, it follows from the Kolmogorov equation for $U$ that  $V$
satisfies
\begin{equation}
\label{eq:HJ_full}
\partial_t V(t,x)
=b(x)\,\partial_x V(t,x)+\tfrac12 a(x)\Big(\partial_{xx}V(t,x)+(\partial_x V(t,x))^2\Big) \, .
\end{equation}
The initial condition is $V(0,x)=h_{\v}(x-\bar x)$. 

Set now $\xi=x-\bar x$ and write $v(t,\xi):=V(t,\bar x+\xi)$. Then 
\begin{equation}
\label{eq:HJ_shifted_full}
\partial_t v(t,\xi)
=b(\bar x+\xi)\,\partial_\xi v(t,\xi)
+\tfrac12 a(\bar x+\xi)\Big(\partial_{\xi\xi}v(t,\xi)+(\partial_\xi v(t,\xi))^2\Big).
\end{equation}
Expanding all terms  into power series at $\xi=0$ and matching the coefficients we obtain
\[
\partial_t {\bm \psi}(t,\bar x)=
{\bf b}(\bar x)\star {\bm \psi}^{[1]}(t,\bar x)+\frac 1 2 {\bf a}(\bar x)\star\big({\bm \psi}^{[2]}(t,\bar x)+{\bm \psi}^{[1]}(t,\bar x)\star{\bm \psi}^{[1]}(t,\bar x)\big).
\]
Since $v(0,\xi)=h_{\v}(\xi)$,  the initial condition is
$
{\bm \psi}(0,\bar x)=\v.
$

Well-posedness follows by the same arguments as in the linear case. 
\end{proof}

\begin{remark}
Note that the essential part of the above proof is the real-analyticity of $U(t,\cdot)$ for each $t \in [0,T]$. There is a huge body of classical literature where this property has been established, usually by constructing a fundamental solution for uniformly parabolic operators relying on the so-called parametrix method. This in turn yields the necessary estimates on the solution and its derivatives required to establish analyticity, see e.g., \cite{eidelman2004analytic, friedman2008partial, friedman1958regularity} and the references therein. For this approach the standard conditions are boundedness on the coefficients and sometimes even non-spatial dependence of $a$ as e.g., in \cite{friedman1958regularity}. In \cite[Chapter 2.3]{eidelman2004analytic} the parametrix construction is extended to equations with linear drift part, but still bounded diffusion and further conditions on the derivatives. The more recent paper \cite{DeckKruse2002} also extends the parametrix construction to the case of unbounded coefficients.  
Applied to our setting, the respective conditions translate to sub-linear growth for the drift, i.e., $|b(x)|\leq C (1 +|x|^{\beta})$ for $\beta <1$, while $a$ has to be globally Lipschitz.

Note that fundamental solutions have been constructed without recourse to the parametrix method (see, e.g., \cite{aronson1967parabolic}), thereby accommodating more general growth conditions; however, this approach does not provide the derivative estimates needed to infer analyticity.

The approach applied in \cite{Takac2012} is also different from the parametrix method. It rather relies on an
$L^2$-a priori bound for $U(t,x)$
when the initial condition is some holomorphic function in a Hardy space. This then yields analytic smoothing  effects such that analyticity in space \emph{and time} holds when the initial condition is only in $L^2$. In a similar context, \cite{escauriaza2017analyticity} provides precise quantitative estimates for space-time analyticity. We also refer to this paper for an extensive historical literature review.
\end{remark}

\begin{remark}
Note that uniform ellipticity is a crucial assumption here, however, not a necessary one. Without it several examples with real analytic characteristics and degenerate diffusion part are known, where solutions of the Kolmogorov equation can even get rougher than the initial condition, see \cite{hairer2015}. On the other hand there are well known diffusion processes, namely affine ones, which can be degenerate but real analyticity is preserved to some extent.

The precise condition which is needed is actually \emph{analytic hypoellipticity} which was first considered in  \cite{Treves1971,BaouendiGoulaouic1972} and systematically analyzed in \cite{Treves1992}. Analytic hypoellipticity just  means that the solution of the PDE is analytic whenever this holds for the operator applied to it. This condition is hard to characterize. In general it does not hold for hypoelliptic operators and it is also not implied by Hörmander’s bracket condition.

Microlocal theory as considered, e.g., in \cite{Treves1992} allows to localize analytic hypoellipticity. An important object in microlocal theory is the analytic wave front set $\operatorname{WF}_a(U)$ of the PDE-solution $U$. In our time-space situation a point $(t,x)$ and a direction $(\kappa, \eta)$ is not an element of $\operatorname{WF}_a(U)$ if -- loosely speaking -- $U$ is real analytic at $(t,x)$ in direction $(\kappa, \eta)$ (expressed in terms of Fourier analysis). For linear differential operators of the current form with real analytic coefficients, that is,
\[
P=-\partial_t  +b(x) \partial_x + \frac{1}{2} a(x) \partial_{xx} 
\]
with principal symbol $\sigma_P((t,x), (\kappa, \eta)) =\frac{1}{2} a(x)\eta^2$ and characteristic variety
$$
\operatorname{char} P = \{ ((t,x) , (\kappa,\eta) ) : (\kappa,\eta) \neq 0 ,\, \sigma_P((t,x), (\kappa, \eta))  = 0 \} , 
$$
a main result (see \cite[Theorem 9.5.1 and Chapter 9.6]{hormander}) states that 
\begin{align}\label{eq:analytic_wave_front}
\operatorname{WF}_a(PU) \subset  \operatorname{WF}_a(U), \quad \text{and} \quad \operatorname{WF}_a(U) \subseteq \operatorname{WF}_a(PU) \cup \operatorname{char} P.
\end{align}
Applying this to the current situation where $a$ can vanish at certain points $x$ and where the initial condition of the Kolmogorov equation is analytic in space, then analyticity in space can only be lost at the degeneracy points of $a$.  This means that Theorem   \ref{thm_real_analytic} also holds true in this case as long as $\bar x$ is not in the degeneracy set. In Section \ref{ex:counterexample} we show the failure of real-analyticity at such degeneracy points by means of a concrete example.

Observe that Theorem \ref{thm_real_analytic} extends under analogous conditions also to $\mathbb{R}^n $-valued SDEs with real-analytic coefficients, which then correspond to specific signature SDEs. As this section deals with one-dimensional equations, we here stated it only in this setting.

In this context, note that the second part of \eqref{eq:analytic_wave_front} gives a powerful criterion in multivariate situations to analyze in which directions non-analyticities propagate and in which not. So even in cases with degenerate diffusion  matrix on the whole state space, there can be directions where analyticity persists and where the above theory is -- after a change of coordinates -- applicable. Moreover,
this microlocal theory also allows 
to consider 
coefficients that are not globally real analytic, but only locally in a neighborhood of $\bar{x}$ and to apply our results in such a situation.
\end{remark}

  }

\subsubsection{Diffusion processes on bounded state spaces with entire coefficients}

This section is dedicated to specify a class of SDEs of the form \eqref{eq:SDE1d_new} in terms of their infinitesimal generators and a class of entire functions given by coefficients 
$\u$ such that
 a solution to the linear ODE \eqref{eqn7} exists, which also satisfies \eqref{eqn28} for each $x\in S$. 
 We here work again with power sequence expansion and consider  operators $\mathcal{A}$ which act on $x \mapsto \lambda_k x^k$ as
\[
\mathcal{A}(\lambda_k(\cdot)^k)(x)=\mathcal{G} (\lambda_{\cdot} x^{(\cdot)})(k), \quad x \in S, \, k \in \mathbb{N}_0,
\]
for some operator 
$\Gcal$ acting on functions $f:\N_0\to \R$ as
\[
\mathcal{G}f(k)=\sum_{j=0}^{\infty} \beta_{kj}(f(j)- f(k))+ \beta_k f(k).
\]
Assuming that $\beta_{kj}\geq 0$ and $\beta_k\leq 0$, we can see that $\Gcal$ corresponds to the generator of a jump process $Z$ with state space $\mathbb{N}_0$ that jumps from $k$ to $j$ with intensity $\beta_{kj}$ and can be killed with rate $\beta_k$ depending on its position $k$. 
The process $Z$ is thus  dual to the original process $X$ and under appropriate integrability conditions (see Theorem~4.11 in \cite{EK:05}) we can conclude that
\begin{align}\label{eq:duality}
\mathbb{E}[\lambda_{Z_0}X_T^{Z_0}]=\mathbb{E}[\lambda_{Z_T}x_0^{Z_T}].
\end{align}
Set then $\mathbf{\mu}_k(t):=\mathbb{P}[Z_t=k]$,  $\mathbf{u}_k=\lambda_k \mu_k(0)$, and note that
\begin{align*}
\mathbb{E}[\lambda_{Z_0}X_T^{Z_0}]=\sum_{k=0}^{\infty}\mathbb{E}[\lambda_{Z_0}X_T^{Z_0}|Z_0=k] P(Z_0=k)=
\sum_{k=0}^{\infty} \lambda_k \mu_k(0)\mathbb{E}[X_T^{k}]=
\sum_{k=0}^{\infty}\mathbf{u}_k \mathbb{E}[X_T^{k}]
\end{align*}
and similarly
$
\mathbb{E}[\lambda_{Z_T}x_0^{Z_T}]=\sum_{k=0}^{\infty} \lambda_k \mu_k(T) x_0^k.
$
By \eqref{eq:duality} we therefore have 
\[
\sum_{k=0}^{\infty}\mathbf{u}_k \mathbb{E}[X_T^{k}]=\sum_{k=0}^{\infty} \lambda_k \mu_k(T) x_0^k.
\]
If the sum and the expected value can be interchanged  this then implies that
$
\mathbf{c}_k(T)= \lambda_k\mu_k(T).
$
Since $\mu(t)$ solves the Fokker Planck equation 
$
\partial_t \mu(t) = \mathcal{G}^* \mu(t),
$
which reduces to infinite dimensional linear ODE in this case, we can identify $L$ with $\mathcal{G}^*$.

Let again $\Vcal$ be as defined in \eqref{eqn20} and in order to simplify the notation set
\begin{equation}\label{Rcal}
\Rcal_C:=\{{\bf u}\in T((\R))\colon {\bf u}_k\geq0,\ \sum_{k=0}^\infty{\bf u}_k\leq C\},\qquad \Rcal:=\bigcup_{C>0}\Rcal_C.
\end{equation}

\begin{theorem}\label{lem4new}
Fix $T >0$,  $\lambda\in T((\R))$, and  $\mu\in \Rcal$ such that
$\u:=(\lambda_0\mu_0,\lambda_1\mu_1,\ldots)\in \Vcal$.
Assume that
$$\Acal (\lambda_k(\fdot)^k)(x)=\sum_{j=0}^\infty \beta_{kj}(\lambda_jx^j-\lambda_kx^k)+\beta_k\lambda_kx^k$$
for some $\beta_{kj}\in \R_+$ and $\beta_k\in \R$ such that $\sup_{k\geq 0}\beta_k^+<\infty$ and $\lim_{j\to\infty}\beta_{kj}=0$ for each $k\in \N_0$. 
Assume also that for all $x\in S+B_\e(0)$ the element
\begin{equation}\label{eqn6Z}
\Big((\lambda_kx^k)_{k\in\N_0},\Big(\sum_{j=0}^\infty \beta_{kj}(\lambda_jx^j-\lambda_kx^k)\Big)_{k\in\N_0}\Big)\in T((\R))\times T((\R))
\end{equation}
lies in the bounded pointwise\footnote{A sequence $((\u^n,\v^n))_{n\in \N}\subseteq T((\R))\times T((\R))$ converges to $(\u,\v)\in T((\R))\times T((\R))$ in the bounded pointwise sense if $\sup_{n,k}|\u^n_k|,\sup_{n,k}|\v^n_k|<\infty$ and $\lim_{n\to\infty}(\u^n_k,\v^n_k)=(\u_k,\v_k)$ for each $k$.} closure of the set
\begin{equation*}
\Big\{\Big(\v,\Big(\sum_{j=0}^\infty \beta_{kj}(\v_j-\v_k)\Big)_{k\in \N_0}\Big)\colon \v\in T(\R)\Big\}\subseteq T((\R))\times T((\R)) \, .
\end{equation*}
{Then there exists a $\Vcal$ valued map $({\bf c}(t))_{t\in[0,T]}$, of the form  ${\bf c}_k(t)=\lambda_k\mu_k(t)$ for some $\mu_k(t)\geq0$, satisfying the linear ODE \eqref{eqn28} for each $x\in S$.} If $\beta_k\leq 0$ for each $k$, then  $\sum_{k=0}^\infty\mu_k(t)\leq \sum_{k=0}^\infty\mu_k$.
If we additionally have that
\begin{align*}
\int_0^T\int_0^T \E[|\Acal(\gg_{\mathbf{c}(t)})(X_s)|]ds dt
&
<\infty,
\end{align*}
we can conclude that the moment formula provided by Theorem~\ref{th:polyinfinite} is satisfied.
\end{theorem}

\begin{proof}
Without loss of generality assume that $\sum_{k=0}^\infty {\mu}_k=1$.
 Set  $D:=C_c(\N_0)$ and consider  the linear operator $\Gcal:D\to C_0(\N_0)$ given by
 $\Gcal v(k):=\sum_{j=0}^\infty \beta_{kj}(v(j)-v(k)).$
Observe that  $D$ is a dense subset of $C_0(\N_0)$  and $\Gcal$ satisfies the positive maximum principle on $\N_0$. By Theorem~4.5.4 in \cite{EK:05} we can conclude that there exists a solution $(Z_t)_{t\in[0,T]}$ to the corresponding martingale problem with initial condition $\mu$, meaning that
\begin{equation}\label{eqn100Z}
v(Z_t)-v(Z_0)-\int_0^t  \Gcal v(Z_s) ds
\end{equation}
is a martingale for all $v\in D$.
Fix  $x\in S+B_\e(0)$ and set 
$$v_x(k):=\lambda_kx^k
\qquad \text{and}\qquad \Gcal v_x(k):=\sum_{j=0}^\infty \beta_{kj}(\lambda_jx^j-\lambda_kx^k).$$
Since $(v_x,\Gcal v_x)$ lies in the bounded pointwise closure of $\{(v,\Gcal v)\colon v\in D\}$ by assumption for each $x\in S+ B_\e(0)$, we can conclude that \eqref{eqn100Z} holds for each $v_x$. Noting that $\exp(\int_0^t\beta_{Z_s} ds)$ is a bounded process of bounded variation, we can conclude that
$$e^{\int_0^t\beta_{Z_r}dr}v_x(Z_t)-v_x(Z_0)-\int_0^t  e^{\int_0^s\beta_{Z_r}dr}(\Gcal v_x(Z_s)+\beta_{Z_s}v_x(Z_s)) ds$$
is a martingale as well. Set now
$\mu_k(t):=\E[\exp(\int_0^t\beta_{Z_s} ds)1_{\{Z_t=k\}}]$ and note that computing the expectation of the last equation we get
$$\sum_{k=0}^\infty v_x(k)\mu_k(t)=\sum_{k=0}^\infty v_x(k)\mu_k(0)+\int_0^t \sum_{k=0}^\infty (\Gcal v_x(k)+\beta_kv_x(k))\mu_k(t).$$
Define ${\bf c}_k(t):=\lambda_k\mu_k(t)$ and note that since
 $\gg_{{\mathbf c}(t)}(x)=\sum_{k=0}^\infty  {\mu}_{k}(t)\lambda_kx^k=\langle v_x,{\mu}(t)\rangle<\infty$ for each $x\in S+B_\e(0)$ we also have that $\gg_{{\mathbf c}(t)}$ is smooth on $S+B_\e(0)$. Since power series analysis yields $\Acal(\gg_{{\mathbf c}(t)})(x)=\sum_{k=0}^\infty \mu_k(t)\Acal (\lambda_k(\fdot)^k)(x)$ we thus get
$$\Acal(\gg_{{\mathbf c}(t)})(x)=\sum_{k=0}^\infty  {\mu}_{k}(t)\sum_{j=0}^\infty \beta_{kj}(\lambda_jx^j-\lambda_kx^k)-\beta_k\lambda_kx^k
=\langle \Gcal v_x,{\mu}(t)\rangle.$$
Combining these observations we obtain
\begin{align*}
\gg_{{\mathbf c}(t)}(x)-\gg_{\u}(x)
&=\sum_{k=0}^\infty  {\mu}_{k}(t)\lambda_kx^k-\sum_{k=0}^\infty  {\mu}_{k}\lambda_k x^k
=\langle v_x,{\mu}(t)\rangle-\langle v_x,{\mu}\rangle\\
&=\int_0^t\langle \Gcal v_x,{\mu}(s)\rangle ds
=\int_0^t \Acal(\gg_{{\mathbf c}(s)})(x)ds
=\int_0^t  \gg_{L({\bf c}(s))}(x)ds,
\end{align*}
for each $x\in S+B_\e(0)$, proving \eqref{eqn28}.
\end{proof}

{
\begin{remark}
   An inspection of the proof of Theorem~\ref{lem4new} provides a representation of the solution $({\bf c}(t))_{t\in[0,T]}$ of the linear ODE \eqref{eqn28} with ${\bf c}_k(0)=(\lambda_0\mu_0,\lambda_1\mu_1,\ldots)\in \Vcal$ for $\mu\in \Rcal$. One indeed has that ${\bf c}_k(t)=\lambda_k\mu_k(t)$ 
where 
\[
\mu_k(t) := \Big(\sum_{k=0}^\infty \mu_k\Big) 
\E\left[\exp\left(\int_0^t \beta_{Z_s} ds\right) 
\mathbf{1}_{\{Z_t = k\}}\right].
\]
Here \( (Z_t)_{t \ge 0} \) is an \(\mathbb{N}_0\)-valued pure jump Markov process 
with generator 
\[
\Gcal v(k) := \sum_{j=0}^\infty \beta_{kj}(v(j) - v(k)), 
\qquad v \in C_c(\mathbb{N}_0).
\]
That is, \(Z\) jumps from state \(k\) to state \(j\) with intensity \(\beta_{kj}\). 
Hence, \(Z\) can be interpreted (and simulated) as a pure jump Markov process 
whose transition rates are given by the matrix \((\beta_{kj})_{kj \in\N_0}\).
\end{remark}
}

Next, we consider three different examples which satisfy the conditions of Theorem~\ref{lem4new}.

\paragraph{The Jacobi diffusion}
Consider the Jacobi diffusion $X$, i.e.~the  process $X$ on $[0,1]$ given by \eqref{eq:SDE1d_new} for $b=0$ and $a(x)= x-x^2$.
Observe that setting  $f_k(x)=(x/2)^k$ we get
$$\Acal(f_k)(x)=\frac {k(k-1)}4(f_{k-1}(x)-f_k(x))-\frac{k(k-1)}4f_k(x),$$
which implies that $\Acal$ is of the form described in Theorem~\ref{lem4new} for $\lambda_k:=(1/2)^k$, $\beta_{kj}=\frac {k(k-1)}41_{\{j=k-1\}}$, and $\beta_k=-\frac {k(k-1)}4$.
Moreover, for each $x\in[-\e,1+\e]$ setting $u^n_{x,k}:=\lambda_kx^k1_{\{k\leq n\}}$ we obtain that
\begin{equation*}
\Big((u^n_{x,k})_{k\in \N_0},\Big(\sum_{j=0}^\infty \beta_{kj}(u^n_{x,j}-u^n_{x,k})-\beta_ku^n_{x,k}\Big)_{k\in \N_0}\Big)_{n\in \N_0}\subseteq T(([0,1]))\times T(([0,1]))
\end{equation*}
converges to \eqref{eqn6Z} uniformly for $n$ going to infinity.

Next, recall that $\Rcal$ is defined in \eqref{Rcal} and fix
$\mu\in \Rcal$ such that setting ${\u}_k:=\mu_k/2^k$ it holds that
$|\gg_{\u}(x)|<\infty$
 for each $x\in [-\e,1+\e]$. By Theorem~\ref{lem4new} we can conclude that 
\eqref{eqn7}  admits a $(-\e,1+\e)$-solution $({\bf c}(t))_{t\in[0,T]}$ with ${\bf c}_k(t)=\mu_k(t)/2^k$,  $\mu_k(t)\geq0$, and  $\sum_{k=0}^\infty\mu_k(t)\leq \sum_{k=0}^\infty\mu_k$.
Moreover, since $\Acal(\gg_{\mathbf{c}(t)})(x)\geq 0$ for each $x\in [0,1]$ we can compute
\begin{align*}
\int_0^T\int_0^T \E[|\Acal(\gg_{\mathbf{c}(t)})(X_s)|]ds dt
&\leq\sup_{x\in[0,1]}(\gg_{{\mathbf c}(T)}(x)-\gg_{\mathbf c}(x))
<\infty.
\end{align*}
This implies that the conditions of Remark~\ref{rem2} are satisfied and by Theorem~\ref{th:polyinfinite} we can conclude that
\begin{equation}\label{eqn8}
\mathbb{E}\Big[\sum_{n=0}^{\infty} {\u}_n  X_T^n\Big]= \sum_{n=0}^{\infty} {\bf c}_n(T)x_0^n.
\end{equation}
Finally, fix $u\geq0$ and observe choosing $\mu\in T((\R))$ with $\mu_k=\frac{(2u)^k}{k!}$ for each $k\in\N_0$ it holds $\mu\in \Rcal$ and $|\gg_{\u}(x)|=\sum_{k=0}^\infty \frac {(ux)^k}{k!}=\exp(ux)<\infty $ for each $x\in [-\e,1+\e]$. We can thus conclude that \eqref{eqn8} provides a representation of
$\E[\exp(uX_T)]$.

Next, choose ${\u}\in T((\R))$ such that $|\gg_{\u}(x)|<\infty$ for each $x\in [-\e,1+\e]\cup \{2+\e\}$ and 
define  $\mu\in T((\R))$ as $\mu_k= 2^k{\bf c}_k$
 for each $k\in\N_0$. 
 Here we need to include $\{2+\e\}$ because of the choice of the factor $\lambda_k=\u_k/\mu_k$. Using $\lambda_k=(1+\e/2)^{-k}$ we would not need to impose this auxiliary assumption.
 {Set then ${\u}^+\in T((\R))$ as $\u_n^{+}=\u_n\lor 0$ and ${\u}^-\in T((\R))$ similarly.}
 Observe that
$
|\gg_{{\u}^+}(x)|+|\gg_{{\u}^-}(x)|
\leq 
    \sum_{k=0}^\infty |{\u}_k|x^k
$
 for each $x\in (-\e,1+\e)$, which is finite since power series converge absolutely in the interior of their domain of convergence.
The same argument for $x=2$ yields $\mu^+,\mu^-\in \Rcal$. We can thus conclude that \eqref{eqn8} provides a representation of
$\E[h_{\u}(X_T)]$ as
$$\E[h_{\u}(X_T)]
=\E[h_{\u^+}(X_T)]-\E[h_{\u^-}(X_T)]
=h_{{\bf c}^+(T)}(x_0)-h_{{\bf c}^-(T)}(x_0).
$$
In particular, choosing ${\bf u}\in T((\R))$ such that $|\gg_{\mathbf u}(x)|<\infty$ for each $x\in [-\e,1+\e]\cup \{2+\e\}$ and setting $\bf c$ such that $h_{\bf c}=\exp(h_{\bf u})$ we get that $|\gg_{\mathbf c}(x)|<\infty$ for each $x\in [-\e,1+\e]\cup \{2+\e\}$. Such ${\bf c}$ is explicitly given by
${\bf c}=e^{\star \bf u}$, i.e.
${\bf c}_k=\frac {1} {k! } \sum_{j=1}^\infty\sum_{i_1+\ldots+i_j=k} u_{i_1}\cdots u_{i_j}$
 for all $k\in\N_0$. 
We can  conclude that \eqref{eqn8} provides a representation of
$\E[\exp(h_{\bf u}(X_T))]$ as
$$\E[\exp(h_{\bf u}(X_T))]
=\E[h_{\bf c^+}(X_T)]-\E[h_{\bf c^-}(X_T)]
=h_{{\bf c}^+(T)}(x_0)-h_{{\bf c}^-(T)}(x_0).
$$

\paragraph{The shifted Jacobi diffusion} Let $X$ be the Jacobi process and  set $X^1:=X-1$. Observe that the corresponding extended generator $\Acal^1$ is given by \eqref{eq:SDE1d_new} for $b=0$ and $a(x)= x+x^2$.
Setting $f_k(x)=(-{x}/2)^k$ we thus have that
$$\Acal^1(f_k)(x)=\frac {k(k-1)}4(f_{k-1}(x)-f_k(x))-\frac{k(k-1)}4f_k(x),$$
which implies that $\Acal^1$ is of the form described in Theorem~\ref{lem4new} for  $\lambda_k:=(-1/2)^k$, $\beta_{kj}=\frac {k(k-1)}41_{\{j=k-1\}}$, and $\beta_k=-\frac {k(k-1)}4$. By symmetry, all the arguments of the previous example work and we can conclude that \eqref{eqn7}  admits a $ [-\e,1+\e]$-solution $({\bf c}(t))_{t\in[0,T]}$ such that
\begin{equation*}
\mathbb{E}\Big[\sum_{n=0}^{\infty} {\bf u}_n  (X_T-1)^n\Big]
= \sum_{n=0}^{\infty} {\bf c}_n(T) (x_0-1)^n,
\end{equation*}
for each ${\bf u}\in T((\R))$ such that ${\bf u}_k=\mu_k(-1/2)^k$ for some ${\bf \mu}\in \Rcal$ and
$|\gg_{\mathbf u}(x)|$
is finite for each $x\in [-\e,1+\e]$. Choosing $\mu\in T((\R))$ with $\mu_k=\frac{(2u)^k}{k!}$ for each $k\in\N_0$ we can thus conclude that 
$$\mathbb{E}[\exp(-uX_T)]=\mathbb{E}[\exp(-u(X_T-1))]\exp(-u)=\exp(-u) \sum_{n=0}^{\infty} {\bf c}_n(T) (x_0-1)^n$$
for each $u\geq0$.

\paragraph{Non polynomial diffusions on the unit interval} Similarly, we can deal with other examples. Consider the process 
$X$ on $[0,1]$ given by \eqref{eq:SDE1d_new} for $b=0$ and $a(x)= x(1-x)(1-x/2)$.
Observe that setting  $f_k(x)=(x/2)^k$ we have that
$$\Acal(f_k)(x)=\frac {k(k-1)}4(f_{k-1}+2f_{k+1}-3f_k),$$
which implies that $\Acal$ is of the form described in Theorem~\ref{lem4new} for  $\lambda_k:=(1/2)^k$, $\beta_{kj}=\frac {k(k-1)}41_{\{j=k-1\}}+\frac {k(k-1)}21_{\{j=k+1\}}$, and $\beta_k=0$.
Following the first example we can then apply Theorem~\ref{th:polyinfinite} to conclude that
$
\mathbb{E}[\sum_{n=0}^{\infty} {\bf u}_n  X_T^n]= \sum_{n=0}^{\infty} {\bf c}_n(T)x_0^n
$
for each ${\bf u}\in T((\R))$ such that ${\bf u}_k=\mu_k(1/2)^k$ for some ${\bf \mu}\in \Rcal$ and
$|\gg_{\mathbf u}(x)|$
is finite for each $x\in [-\e,1+\e]$. Again, observe that this in particular provides a representation for $\E[\exp(uX_T)]$ for each $u\geq0$.

 The same type of approach generalizes to more elaborate types of processes on $[0,1]$, as for instance the  Fleming Viot type process from population genetics  given by
\[
dX_t=\sum_{n=1}^{\infty} b_n(X^n_t - X_t^{n+1}) dt+ \sqrt{X_t(1-X_t)} dB_t
\]
for some appropriately chosen coefficients $b_n$ (see \cite{CSp:18}). This is classically neither an affine nor a polynomial process. It corresponds to a diffusion of the form \eqref{eq:SDE1d_new} where $b$ and $a$ are given by \eqref{eqn4new} for
$$
 {\bf b}_n =b_n-b_{n-1},\quad \a_1=1,\quad \a_2=-1\quad\text{and}\quad \a_n=0\quad\text{for each $n\neq 1,2$.}
$$
By Theorem~\ref{thm1} the process $X$ can be seen as the projection of a $\Vcal$-affine process $\X$ whose operator $R$ is given by
$R(\u):={\bf b}\star {\bf u}^{[1]}+\frac 1 2 {\bf a}\star\big({\bf u}^{[2]}+{\bf u}^{[1]}\star{\bf u}^{[1]}\big)$
and in this case satisfies
\begin{align*}
R(\mathbf{u})_n
&=\sum_{k_1+k_2=n}\Big( (b_{k_1}-b_{k_1-1})(k_2+1)\u_{k_2+1}\Big)
+\frac 1 2 
(n(n+1)\u_{n+1}-(n+1)(n+2)\u_{n})
\\
&+\frac 1 2\Big(
\sum_{k_2+k_3=n-1}
(k_2+1)\u_{k_2+1}(k_3+1)\u_{k_3+1}
-
\sum_{k_2+k_3=n-2}
(k_2+1)\u_{k_2+1}(k_3+1)\u_{k_3+1}\Big).
\end{align*}
{In contrast to results above, where real analytic transition kernels are considered, in \cite{CP:17} the authors are directly constructing diffusion semigroups on spaces of real analytic functions, which is in the spirit of our work.

Consider the Hilbert space
$$H^2:=\{ \u\in \Tcal(S)^*\colon \sum_{n=0}^\infty |\u_n|^2<\infty\}
$$
which corresponds to a Hardy space, as introduced in \cite{CP:17} in our notation.

\begin{lemma}\label{lemhardy}
    Fix $S\subseteq [-1,1]$ and consider a one-dimensional diffusion $X$ given by \eqref{eq:SDE1d_new} with extended generator $\mathcal A$. Assume that  $\Acal$ satisfies
    $$\Acal f(x)=  (h_{\tilde\a}(x)-x^2) f''(x),\qquad x\in S,$$
    for some $\tilde  \a\in \Tcal(S)$ such that \begin{equation}\label{hardybound}
       \Big|\sum_{n=0}^\infty\tilde \a_nz^n\Big|\leq 1/2 
    \end{equation}
    for each $z\in \C$ with $|z|< 1$. 
    Then for each $\u\in H^2$ there exists ${\bf c}(t)\in H^2$ such that
    $\E[h_\u(X_t)|X_0=x_0]=h_{{\bf c}(t)}(x_0).$
\end{lemma}

Note that an operator $\mathcal A$ of such form satisfies the positive maximum principle and can be the extended generator of a diffusion process on  $S$ if and only if $h_{\tilde\a}(x)\geq x^2$ on $S$. In view of \eqref{hardybound} this implies that $S\subseteq [-2^{-1/2},2^{-1/2}]$.  

\begin{corollary}
    Consider the setting introduced in Lemma~\ref{lemhardy}. Then there exist an $H^2$-valued solutions ${\bf c}(t)$ of \eqref{eqn7} for
    $$
    L({\bf u})=\frac 1 2 (\tilde {\bf a}-e_{11})\star{\bf u}^{[2]},$$
    such that the results of Theorem~\ref{th:polyinfinite} holds true.
\end{corollary}

\begin{proof}
Proceeding as in the proof of Theorem~\ref{thm:momentpoly} we get that ${\bf c}(t)$ solves \eqref{eqn28}. Moreover, condition (i) of Remark~\ref{rem6} is satisfied by compactness of $S$ and  condition (ii) of the same remark follows by
        $$\int_0^T\int_0^T \E[|\Acal\gg_{{\bm c}(s)}(X_t)|]ds dt
        \leq
        \int_0^T\int_0^T \E[|\Acal h_\u(X_{t+s})|]ds dt
        \leq 
        \sup_S|\mathcal Ah_{\u}|\frac {T^2} 2
        <\infty,$$
        where the first inequality follows by noticing that 
        $$\Acal h_{{\bm c}(s)}=(\Acal \E[h_\u(X_s)|X_0=\cdot])(\cdot)=\E[\Acal h_\u(X_s)|X_0=\cdot]$$
         and applying Jensen inequality.
\end{proof}

Examples for $a(x)$ satisfying these assumptions include Jacobi diffusions as $1/2-x^2$ or $x(1/2-x)$, but also more involved examples as $e^{x-1}/2-x^2$. It is interesting to note that, as in Theorem~\ref{lem4new}, a special role is played by the coefficient of $x^k$ in $\mathcal A ((\cdot)^k)(x)$. 

By tensorization we do also obtain a multivariate corollary.

\begin{corollary}

Fix $S\subseteq [-1,1]^d$ and consider a $d$-dimensional diffusion $X$ with extended generator $\mathcal A$. Assume that  $\Acal$ satisfies
    $$\Acal f(x)=  \sum_{i=1}^d (h_{\tilde\a_i}(x)-x_i^2) \partial_i^2 f(x),\qquad x\in S,$$
    for some $\tilde  \a_i \in \Tcal(S)$ such that 
    \begin{equation}
       \Big|\sum_{n=0}^\infty\tilde \a_{i,n} z^n\Big|\leq 1/2 
    \end{equation}
    for each $z\in \C$ with $|z|< 1$. 
    Then for each $\u\in H^2 \otimes \ldots \otimes H^2$ there exists ${\bf c}(t)\in H^2 \otimes \ldots \otimes H^2 $ such that
    $\E[h_\u(X_t)|X_0=x_0]=h_{{\bf c}(t)}(x_0).$

\end{corollary}
}

{\subsubsection{An example showing the failure of real-analyticity 
$x \mapsto \mathbb{E}[f(X_t)|X_0=x]$ at degeneracy points}
\label{ex:counterexample}

The next example (derived from \cite{IT:97}) illustrates the
failure of real-analyticity of
$x \mapsto \mathbb{E}[f(X_t)|X_0=x]$ at degeneracy points
 even though the corresponding stochastic flows are real analytic. }

Fix $d=1$ and $S=\R$.
Let $X_t^{x_0}$ be a process given by \eqref{eq:SDE1d_new} for $b(x)=\cos(2x)\sin(2x)$ and $a(x)=\sin^2(2x)$ and initial condition $X_0=x_0$. The vector fields are obviously entire functions and globally bounded on $\mathbb{R}$. The solution of this SDE is given by $X_t^{x_0}=x_0$ for each $X_0=x_0=k\pi/2$ and 
$$X_t^{x_0}= \arctan(\exp(2B_t)\tan(x_0))+k\pi$$
for all $x_0 \in(-\frac \pi 2+k\pi,\frac \pi 2 +k\pi)$. For all $t>0$ the map $x_0\mapsto X_t^{x_0}$ is almost surely not entire, but real analytic with a positive radius of convergence depending on the Brownian path $B$. The expectation of this quantity is, however, not real analytic at {$x_0=0$ where $a(x_0)=0$,} since otherwise also its derivative would be real analytic, which is the expectation of
$$
\frac{\partial}{\partial x_0} X_t^{x_0} = \frac{\exp(2 B_t)}{1 + (\exp(4B_t)-1) \sin^2(x_0)} \, \text{ for } t > 0 \text{ and } x_0\in(-\frac \pi 2+k\pi,\frac \pi 2 +k\pi).
$$
Indeed the functional series, which represents the flow's derivative for small $x_0$
$$
\sum_{j=0}^\infty \exp(2B_t) {(-1)}^j{\big( \exp(4B_t) -1 \big)}^j \sin^{2j}(x_0)
$$
is absolutely minorized by
$
\sum_{j=0}^\infty | \exp(2B_t) {(-1)}^j{( \exp(4B_t) -1 )}^j | {(\lambda x_0)}^{2j}
$
for some $0 < \lambda < 1$ and $x_0$ in an interval around $0$ such that $ (\lambda x_0)^2 \leq (\sin(x_0))^2 $. The expectation of the minorizing series, however, has a radius of convergence $0$, since the highest order term diverges to infinity like ${ \exp(4tj+2t) }$ as $ j \to \infty$.

Even though real analyticity of the expected value fails for the identity map, there is a real-analytic family of functions on $(-\pi/2, \pi/2)$ (and similarly for every shift of $k\pi$), dense in $C(K)$ for compact sets $K \subset (-\pi/2, \pi/2)$ where it works. Indeed,
\[
\text{span}\{ x \mapsto \tan^j(x) \, | \, j=0,1, \ldots\}
\]
is by the Stone--Weierstrass Theorem dense in $C(K)$ and since
\[ \mathbb{E}[\tan^j(X_t^{x_0})]=\tan^j(x_0)\mathbb{E}[\exp(2j B_t)]=\tan^j(x_0)\exp(2j^2t), \]
the real analyticity of the expected values is clear.

{
In general, note that  at the degeneracy points of the diffusion coefficient, 
we can still construct -- by means of the radius of convergence, which depends measurably on the Brownian path -- an increasing sequence of stopping times $\tau_n \nearrow \infty$ such that $x\mapsto \mathbb{E}_x[f(X_{t \wedge \tau_n})]$ is real analytic 
(at least if the $\omega$-dependent coefficients of the power series of $f(X_{t \wedge \tau_n})$ are sufficiently integrable). On the one hand this yields a sequence of real analytic functions converging locally uniformly to 
$x \mapsto \mathbb{E}_x[f(X_{t})]$.
On the other hand, approximate affine or polynomial structures exist for the stopped processes by Dynkin's formula. Indeed, as the generator $\mathcal{A}$ maps real analytic functions to real analytic ones, for each $n \in \mathbb{N}$, we obtain from
\begin{align*}
\mathbb{E}_x[f(X_{t\wedge \tau_n})] &= f(x) + \mathbb{E}_x\big[ \int_0^{t \wedge \tau_n} \mathcal{A} f (X_s) ds \big] \\&= f(x)+\mathbb{E}_x\big[ \int_0^{t} \mathcal{A} f (X_{s \wedge \tau_n}) ds \big] - \mathbb{E}_x\big[ \mathcal{A} f (X_{\tau_n}) (t- (\tau_n \wedge t)) \big]
\\&= f(x)+ \int_0^{t} \mathcal{A} \Big ( \mathbb{E}_{\cdot} \big[   f (X_{s \wedge \tau_n})  \big] \Big)(x) ds -  \mathbb{E}_x\big[ \mathcal{A} f (X_{\tau_n}) (t- (\tau_n \wedge t)) \big]
\end{align*}
(under the assumption that we can exchange $\mathcal{A}$ and $\mathbb{E}_x$ and the time integral) an inhomogeneous infinite dimensional linear ODE
\[
\partial_t \mathbf{c}(n,t)= L(\mathbf{c}(n,t))   -  \mathbb{E}_x\big[ \mathcal{A} f (X_{\tau_n}) (t- (\tau_n \wedge t)) \big] , \quad  \mathbf{c}(n,0)=\mathbf{u},
\]
where $\mathbf{c}(n,t)$ denotes the coefficients of the real analytic map $x \mapsto \mathbb{E}_x[f(X_{t\wedge \tau_n})]$ and
$L$  the linear operator corresponding to the action of the generator $\mathcal{A}$ on real-analytic maps. 
Note that the inhomogeneity $\mathbb{E}_x\big[ \mathcal{A} f (X_{\tau_n}) (t- (\tau_n \wedge t) )\big] $ tends to $0$ as 
 $n \to \infty$.}

\section{Numerical illustrations}\label{sec:numerics}

We consider now several concrete examples for which we solve the corresponding Riccati, transport and/or linear ODEs numerically. 
The corresponding code is available at \href{GitHub-Repo}{https://github.com/sarasvaluto/AffPolySig}.

Recall that we can consider two different approaches. The first one follows Section~\ref{sec4} and is based on the signature process
$\mathbb X_t^{sig}:=(1,X_t,\frac{X_t^2}{2!},\ldots)$. The second one follows  Section~\ref{sec5} and is based  on the power sequence expansion 
$\mathbb X_t^{pow}:=(1,X_t,X_t^2,\ldots)$.

\subsection{Riccati ODE for a one dimensional Brownian motion}\label{sec:Riccatinum}

Usually, the infinite dimensional Riccati ODE given by \eqref{eqn7aff} cannot be solved analytically and one needs to resort to numerical schemes. We start here with the simple setup of a one dimensional Brownian motion $X$ and the corresponding signature process $\X^{sig}$ or power expansion $\X^{pow}$. 
As a first numerical procedure we consider Scheme \ref{sch1}  to obtain a representation of $\E[\exp(\langle \u^{sig},\X_T^{sig}\rangle)]$ and $\E[\exp(\langle \u^{pow},\X_T^{pow}\rangle)]$. Of course other options are possible, starting from the choice of the truncation in the second item of the scheme.

 Recall that in the case of Brownian motion the Riccati operators  corresponding to the two settings $\X^{sig}$ and $\X^{pow}$ are given by
\begin{align*}
R^{sig}(\u)_k &= \frac{1}{2} \Big(\u_{k+2} + \sum_{i+j=k}\binom k i \u_{i+1}\u_{j+1}\Big),\\
R^{pow}(\u)_k &= \frac{1}{2} \Big((k+1)(k+2)\u_{k+2} + \sum_{i+j=k}(i+1)(j+1) \u_{i+1}\u_{j+1}\Big).
\end{align*}
\begin{scheme}\label{sch1}\phantomsection
\begin{enumerate}
\item Set a truncation level $K$.
    \item Define the truncated operators $R^{sig,K}$ and $R^{pow,K}$ as
    $$R^{sig,K}(\u)_k:=R^{sig}(\u)_k1_{\{k\leq K\}}\qquad\text{and}\qquad R^{pow,K}(\u)_k:=R^{pow}(\u)_k1_{\{k\leq K\}}.$$
    Observe that $R^{sig,K}(\u)=R^{sig}(\u)^{\leq K}$ and $R^{pow,K}(\u)=R^{pow}(\u)^{\leq K}$.
    \item Consider the truncated initial conditions $\u^{sig,\leq K}$ and $\u^{pow,\leq K}$.
    \item Employ a standard ODE solver to solve the ODEs
   \begin{align*}
       \partial_t \bm\psi^{sig,K}(t)&=R^{sig,K}(\bm\psi^{sig,K}(t)),& \bm\psi^{sig,K}(0)&=\u^{sig,\leq K}\\
    \partial_t \bm\psi^{pow,K}(t)&=R^{pow,K}(\bm\psi^{pow,K}(t)),& \bm\psi^{pow,K}(0)&=\u^{pow,\leq K} 
   \end{align*}
    \item The expected representations are then given by
    \begin{align*}
        \E[\exp(\langle \u^{sig},\X_T^{sig}\rangle)]&\approx \exp(\bm\psi^{sig,K}(T)_\emptyset)\qquad\text{and}\\
        \E[\exp(\langle \u^{pow},\X_T^{pow}\rangle)]&\approx \exp(\bm\psi^{pow,K}(T)_\emptyset).
    \end{align*}
\end{enumerate} 
\end{scheme}

\begin{example}[Laplace transform of the geometric Brownian motion]
Let now $(Y_t)_{t\in[0,T]}$ be a geometric Brownian motion with parameters $\mu=1/2$ and $\sigma=1$ such that
$Y_t=Y_0e^{X_t}$
for a standard Brownian motion $(X_t)_{t\in[0,T]}$. The corresponding Laplace transform is given by
$\E[\exp(-cY_T)]=\E[\exp(-cY_0\exp(X_{T}))]$
and is finite if and only if $cY_0\geq 0$.

Since the map  $f(x)=\exp(-cY_0\exp(x))$ lies in $\Dcal$ for each $c,Y_0>0$ we have that $\u^{pow}_k:=-cY_0/k!$ lies in $\Wcal^{pow}$
as defined in Proposition~\ref{prop1}. This in particular implies that  
$\E[\exp(-cY_T)]=\exp(\bm \psi(T)_\emptyset)$
where ${\bm \psi}$ is a $\Wcal^{pow}$-valued solution  of the Riccati ODE 
\begin{equation}\label{eqn16}
\partial_s{\bm \psi}(s)={R^{pow}({\bm \psi}(s))},\qquad {\bm \psi}(0)=\u^{pow}.
\end{equation}
Comparing the results of the numerical scheme proposed above with a Monte Carlo approximation for $c=T=Y_0=1$ we obtain the results of Figure~\ref{fig1}.
The same procedure using the signature's extension leads to the same result.
\begin{figure}[h!]
\centering
\includegraphics[width=0.7\textwidth]{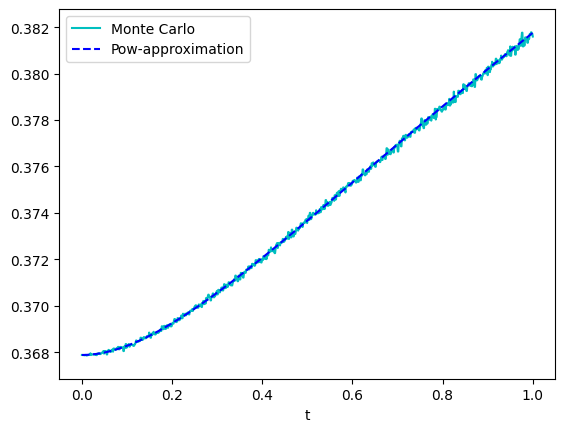}
\caption{$\E[\exp(-\exp(X_t))]$ computed with Scheme~\ref{sch1} for $K=20$.}\label{fig1}
\end{figure}
\end{example}

As we will see in Example~\ref{ex3} below, Scheme~\ref{sch1} may not always work. A more stable option is given by the following alternative scheme, based on the transport equation proposed in Theorem~\ref{thm:affinetrans} or Theorem~\ref{thm1} respectively.

For each $t\in[0,T]$ and each $\u$, such that the involved quantities are well defined, set
$v(t,\u):=\mathbb E\Big[\exp(\langle \u,\X_t^{sig}\rangle)\Big].$
By Theorem~\ref{thm:affinetrans} we expect that 
$$\partial_t v(t,\u)=\partial_\u^{R^{sig}(\u)}v(t,\u),\qquad v(0,\u)=\exp(\u_\emptyset)$$
where
$$\partial_\u^{R^{sig}(\u)}v(t,\u)=\lim_{M\to\infty}M\Big(v(t,\u+\frac {R^{sig}(\u)}M)-v(t,\u)\Big).$$
Discretizing both derivatives, we get 
$$\frac NT\Big(v(t+\frac T N,\u)-v(t,\u)\Big)
\approx \partial_t v(t,\u)
= \partial_u^{R^{sig}(u)}v(t,\u)
\approx M\Big(v(t,\u+\frac {R^{sig}(\u)}M)-v(t,\u)\Big),$$
which leads to
\begin{align*}
v(t,\u)
&\approx v(t-\frac T N,\u)+ \frac{MT}N\Big(v(t-\frac T N,\u+\frac {R^{sig}(\u)}M)-v(t-\frac T N,\u)\Big)\\
&=\lambda(v(t-\frac T N,A_M^{sig}(\u))+ (1-\lambda)v(t-\frac T N,\u),
\end{align*}
for $\lambda:=MT/N$ and $A_M^{sig}(\u):=\u+{R^{sig}(\u)}/M$.
Starting with $t=Tn/N$ and repeating the procedure $n$ times we thus obtain
\begin{align*}
v(Tn/N,\u)
&\approx\sum_{m=0}^n \binom n m(1-\lambda)^{n-m}\lambda^mv(0,(A_M^{sig})^{\circ m}(\u))\\
&=\sum_{m=0}^n \binom n m(1-\lambda)^{n-m}\lambda^m \exp((A_M^{sig})^{\circ m}(\u)_\emptyset),
\end{align*}
where $\circ$ denotes the composition.
Observe that the longest composition of the Riccati operator has length $n$. Due to the particular form of $R$, this implies that this approximation does not depend on terms of $\u$ of order higher than $2n$. The resulting scheme is the following.

\begin{scheme}\label{sch2}
\begin{enumerate}
\item Fix $N,M$ and fix the truncation level to $K=2N$. Set $\lambda=MT/N$.
    \item Define the truncated operators $R^{sig,K}$ and $R^{pow,K}$ as
    $$R^{sig,K}(\u)_k:=R^{sig}(\u)_k1_{\{k\leq K\}}\qquad\text{and}\qquad R^{pow,K}(\u)_k:=R^{pow}(\u)_k1_{\{k\leq K\}}.$$
    Set $A_M^{sig,K}(\u):=\u+\frac {R^{sig,K}(\u)}M$ and $A_M^{pow,K}(\u):=\u+\frac {R^{pow,K}(\u)}M$.
    \item Consider the truncated initial conditions $\u^{sig,\leq K}$ and $\u^{pow,\leq K}$.
    \item The expected representations are then given by
    \begin{align*}
        \E[\exp(\langle \u^{sig},\X_{\frac {Tn}N}^{sig}\rangle)]&\approx \sum_{m=0}^n \binom n m(1-\lambda)^{n-m}\lambda^m \exp((A_M^{sig,K})^{\circ m}(\u^{sig})_\emptyset)
        \qquad\text{and}\\
        \E[\exp(\langle \u^{pow},\X_{\frac {Tn}N}^{pow}\rangle)]&\approx \sum_{m=0}^n \binom n m(1-\lambda)^{n-m}\lambda^m \exp((A_M^{pow,K})^{\circ m}(\u^{pow})_\emptyset).
    \end{align*}
\end{enumerate} 
\end{scheme}

\begin{example}\label{ex3}
    A second challenging example is given by $\E[\exp(-\frac{X_t^4}{4!})]$ for a standard Brownian motion $X$. Also in this case, since $f(x):=\exp(-x^4/4!)$ lies in $\Dcal$, we can apply Proposition~\ref{prop1} to get that 
$\E[\exp(-\frac{X_t^4}{4!})]=\exp(\bm \psi(T)_\emptyset),$
where ${\bm \psi}$ is $\Wcal^{sig}$-valued solution  of the Riccati ODE
\begin{equation}\label{eqn16b}
\partial_s{\bm \psi}(s)={R^{sig}({\bm \psi}(s))},\qquad {\bm \psi}(0)=-e_1^{\otimes 4}.
\end{equation}
    In this case, Scheme~\ref{sch1} does not produce the desired output. Indeed, as one can see from Figure~\ref{fig2}, increasing the truncation level $K$ leads to an earlier explosion of the approximated solution.
\begin{figure}[h!]
\centering
\includegraphics[width=0.7\textwidth]{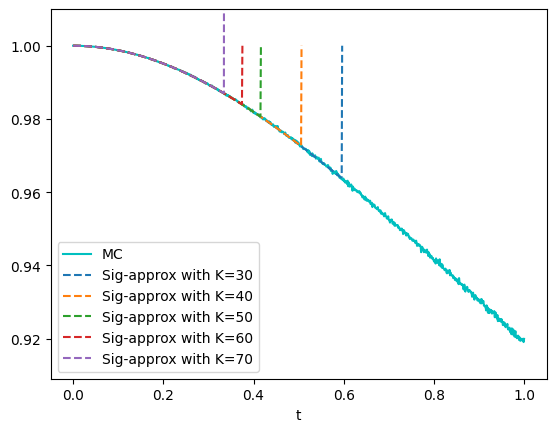}
\caption{$\E[\exp(-X_t^4/4!)]$ computed with Scheme~\ref{sch1} for different values of $K$. The approximations $\E[\exp(-X_t^4/4!)]$ are represented only until their explosion time.}\label{fig2}
\end{figure}
Moreover, since  $\E[\exp(X_t^4)]=\infty$, it follows that
    $\lim_{N\to\infty}\E[{\bf c}^N(T)_\emptyset]=
    \lim_{N\to\infty}\E[\sum_{k=0}^N\frac 1 {k!} (-X_T^{4})^k]=\infty,$
    where ${\bf c}^N$ is a solution  of the linear equation 
    $\partial_s{\bf c}^N(s)={L^{sig}({\bf c}^N(s))}$, for the initial condition ${\bf c}^N(0)=\exp(\shuffle(-e_1^{\otimes 4}))^{\leq N}.$
    This implies that a (naive) scheme implementing the moment formula  is also  problematic.
Scheme~\ref{sch2} based on the transport equation works significantly better than the Riccati approximation and we get the result illustrated in Figure~\ref{fig3}. Observe that the explosion time of the approximated solution increases with $M$. 
{For an alternative  scheme that works for this specific example we refer to \cite{jaber2024fourier},  Section 4.1.}
  \begin{figure}[h!]
\centering
\includegraphics[width=0.7\textwidth]{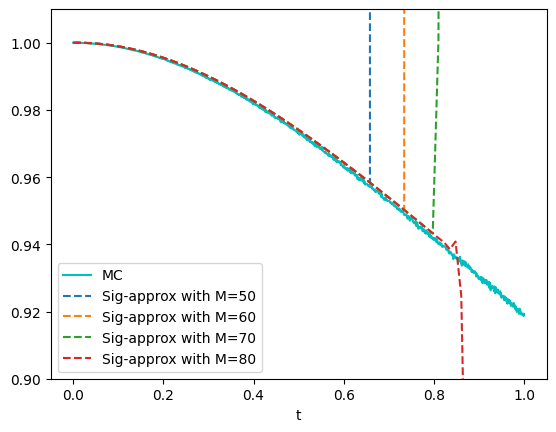}
\caption{$\E[\exp(-X_t^4/4!)]$ computed with Scheme~\ref{sch2} for $N=80$, $K=160$, and different values of $M$. The approximations are just represented until their explosion time.}\label{fig3}
\end{figure}  
\end{example}

\subsection{Examples on $[0,1]$}\label{ex1}
We 
now propose a numerical scheme for the moment formula in the spirit of Scheme~\ref{sch1}.
As before $L^{pow}$ denotes the operator corresponding to $\mathbb{X}^{pow}$.

\begin{scheme}\label{sch3}
\begin{enumerate}
\item Set a truncation level $K$.
    \item Define the truncated operator $L^{pow,K}$ as
    $L^{pow,K}(\u)_k:=L^{pow}(\u)_k1_{\{k\leq K\}}.$
    \item Consider the truncated initial condition $\u^{pow,\leq K}$.
    \item Compute the entries of the matrix $G$ satisfying
    $L^{pow,K}(e_1^{\otimes k})_j=G^{pow,K}_{kj}$. Observe that the solution ${\bf c}^{pow,K}$ of the linear ODE corresponding to $L^{pow,K}$ satisfies
    $${\bf c}^{pow,K}(t)=\exp(tG^{pow,K})\u^{pow,\leq K},$$
    where $\exp$ denotes the matrix exponential.
    \item The expected representations are  given by
$
       \E[\langle \u^{pow},\X_T^{pow}\rangle]\approx \sum_{n=1}^{\infty} {\bf c}^{pow,K}_n(T) x_0^n.
$
\end{enumerate} 
\end{scheme}

We exemplify this scheme by means of  the moment generating function of a Jacobi diffusion, see Figure \ref{fig4}.

  \begin{figure}[h!]
\centering
\includegraphics[width=0.7\textwidth]{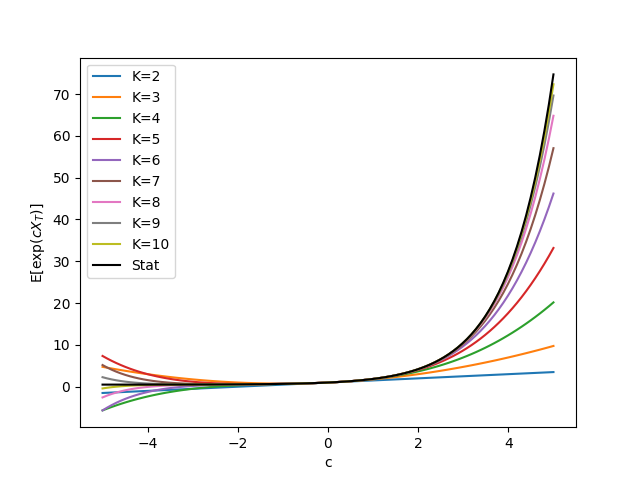}
\caption{$\E[\exp(cX_T)]$
for a Jacobi diffusion with $X_0=\frac 1 2$ and $T=1000$ computed with Scheme~\ref{sch3} for  different truncation levels $K$. The result is compared with the moment generating function of the stationary  measure $(\delta_0+\delta_1)/2$.}\label{fig4}
\end{figure}

\begin{appendix}
\section{Adaptation of Theorem 4.4.11 of \cite{EK:05}}
\begin{lemma}\label{lemA}
Consider a filtered probability space $(\Omega, \mathcal{F},(\mathcal{F}_t)_{t\in[0,T]}, \mathbb{P})$ and two measurable maps $X,Y:[0,T]\times [0,T]\times \Omega\to \R$. Assume that both
$$\E\Big[X(s,t)-X(0,t)-\int_0^s Y(r,t)dr\Big]=0\quad\text{and}\quad\E\Big[X(s,t)-X(s,0)-\int_0^t Y(s,r)dr\Big]=0$$
for each $s,t\in[0,T]$. If $\int_0^T\int_0^T \E[ |Y(s,t)|]dsdt<\infty$, then
 $\E[X(0,T)]=\E[X(T,0)].$
\end{lemma}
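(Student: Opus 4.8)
The plan is to pass to the deterministic functions $g(s,t):=\mathbb{E}[X(s,t)]$ and $h(s,t):=\mathbb{E}[Y(s,t)]$ and to reduce everything to a two–dimensional Fubini computation. First I would rewrite the two hypotheses in integrated form: interchanging expectation and the inner time integral (Fubini–Tonelli, legitimate because $\int_0^T\!\int_0^T|h|\,ds\,dt<\infty$ together with the integrability making the stated expectations finite) turns them into
\begin{align}
g(s,t)-g(0,t)&=\int_0^s h(u,t)\,du,\label{plan:I}\\
g(s,t)-g(s,0)&=\int_0^t h(s,v)\,dv,\label{plan:II}
\end{align}
valid for all $s,t\in[0,T]$. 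These are exactly the integrated transport relations $\partial_s g=\partial_t g=h$, so the content of the lemma is that $g$ is constant along each anti-diagonal $\{s+t=\text{const}\}$; the assertion $\mathbb{E}[X(0,T)]=\mathbb{E}[X(T,0)]$ is precisely the instance $g(0,T)=g(T,0)$.

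Next, for each $T'\in[0,T]$ I would integrate $h$ over the triangle $\Delta_{T'}:=\{(s,t):s,t\ge 0,\ s+t\le T'\}\subseteq[0,T]^2$, which is licit since $h\in L^1([0,T]^2)$ by hypothesis, and evaluate the double integral in both orders. Using \eqref{plan:I} for the inner $s$-integral and \eqref{plan:II} for the inner $t$-integral gives
\[
\int_0^{T'}\!\big(g(T'-t,t)-g(0,t)\big)\,dt=\iint_{\Delta_{T'}}h=\int_0^{T'}\!\big(g(s,T'-s)-g(s,0)\big)\,ds.
\]
The substitution $s=T'-t$ shows that the two integrals of $g$ along the hypotenuse coincide and cancel, leaving the boundary identity
\[
\int_0^{T'} g(0,t)\,dt=\int_0^{T'} g(s,0)\,ds,\qquad T'\in[0,T].
\]

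Finally I would differentiate this identity in $T'$. The boundary functions $t\mapsto g(0,t)$ and $s\mapsto g(s,0)$ are continuous — indeed \eqref{plan:II} at $s=0$ and \eqref{plan:I} at $t=0$ exhibit them as indefinite integrals, hence absolutely continuous — so the fundamental theorem of calculus yields $g(0,T')=g(T',0)$ for every $T'$, and the case $T'=T$ is the claim. The main obstacle is not the algebra, which is a clean Fubini cancellation, but the measure-theoretic bookkeeping: justifying the interchange of expectation and integration that produces \eqref{plan:I}–\eqref{plan:II}, and, for the last step, the (absolute) continuity of the two boundary traces. Both rest on the integrability hypothesis $\int_0^T\!\int_0^T|\mathbb{E}[Y]|\,ds\,dt<\infty$ together with the finiteness of the expectations implicit in the hypotheses (and, in the paper's applications, on the continuity of the processes involved, which makes these interchanges immediate).
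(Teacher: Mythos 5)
Your argument is correct, and its first half coincides with the paper's: both proofs pass to the deterministic functions $g(s,t)=\mathbb{E}[X(s,t)]$ and $h(s,t)=\mathbb{E}[Y(s,t)]$ and convert the two martingale-type hypotheses into the integrated relations $g(s,t)-g(0,t)=\int_0^s h(u,t)\,du$ and $g(s,t)-g(s,0)=\int_0^t h(s,v)\,dv$. Where you diverge is in the final step. The paper notes that these relations make $g$ absolutely continuous in each variable separately and then simply invokes Lemma~4.4.10 of \cite{EK:05} to conclude $g(T,0)=g(0,T)$. You instead prove the required deterministic statement from scratch: integrating $h$ over the triangle $\{s+t\le T'\}$, evaluating by Fubini in both orders, cancelling the two hypotenuse integrals after the substitution $s=T'-t$, and differentiating the resulting boundary identity $\int_0^{T'}g(0,t)\,dt=\int_0^{T'}g(s,0)\,ds$. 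This is in effect an inlined, self-contained proof of the special case $u=v$ of the cited Ethier--Kurtz lemma; it costs a few extra lines but removes the external reference, and the triangle computation is clean and easy to check. One caveat, which applies equally to the paper's version: to get continuity of the boundary traces $t\mapsto g(0,t)$ and $s\mapsto g(s,0)$ (needed before you differentiate at the endpoint $T'=T$) you represent them as indefinite integrals of $h(0,\cdot)$ and $h(\cdot,0)$, which requires interchanging expectation and the inner time integral on those two specific slices; the hypothesis $\int_0^T\int_0^T|h|\,ds\,dt<\infty$ only controls almost every slice, so this interchange is an implicit additional integrability assumption. The paper's proof makes exactly the same silent interchange (and, in the applications, it is justified by the imposed moment conditions), so you are at parity with --- and arguably more explicit than --- the original on this point.
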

\begin{proof}
We follow the proof of Theorem 4.4.11 in \cite{EK:05}. Set $f(s,t):=\E[X(s,t)]$ and note that the martingale properties and Fubini yield
$$f(s,t)-f(0,t)-\int_0^s\E[Y(r,t)]dr=f(s,t)-f(s,0)-\int_0^t\E[Y(s,r)]dr=0,$$
for each $s,t\in[0,T]$. This shows that $f(s,t)$ is absolutely continuous in $s$ for each fixed $t$ and in $t$ for each fixed $s$. By Lemma~4.4.10 in the same book we can conclude that
$f(T,0)-f(0,T)=0$ and the result follows.
\end{proof}

\section{Proofs for Section \ref{sec:concrete}}

\subsection{Proof of Lemma~\ref{lem111}}\label{appB1}
\ref{itnew1}
Fix $f\in\Dcal$ and $t>0$ and note that $P_tf(x_0)=\frac 1 {\sqrt{2t\pi}}\int_{\R}  f(x)\exp(-\frac{(x-x_0)^2}{2t})dx$ reads
\begin{align*}
&\exp\Big(\frac{-x_0^2}{2t}\Big)\frac 1 {\sqrt{2t\pi}}
\int_{\R}  f(x)
\exp\Big(\frac{-x^2}{2t}\Big)
\sum_{k=0}^\infty\frac 1 {k! }\Big(\frac{xx_0}{t}\Big)^kdx\\
&\qquad=\exp\Big(\frac{-x_0^2}{2t}\Big)\frac 1 {\sqrt{2t\pi}}
\sum_{k=0}^\infty\frac 1 {k! }
\Big(\int_{\R}  f(x)
\exp\Big(\frac{-x^2}{2t}\Big)
\Big(\frac x t\Big)^kdx\Big)x_0^k,
\end{align*}
where in the last step we used Fubini justified by the fact that
$$\int_{\R}  \sum_{k=0}^\infty |f(x)|
\exp\Big(\frac{-x^2}{2t}\Big)
\frac 1 {k! }\Big(\frac{|xx_0|}{t}\Big)^kdx
\leq
\int_{\R}
\exp\Big(\frac{-x^2}{2t}+a(|x|+1)+\frac{|xx_0|}{t}\Big)dx<\infty,
$$
for some $a\in \R$.
Since the same bound shows that $P_tf(x_0)<\infty$ for each $x_0\in \R$, we can conclude that $P_tf$ is an entire map. 
To show that $P_tf\in \Bcal$, observe that
$$|P_tf(x_0)| 
\leq 
P_t\exp(a(|x_0+\fdot|+1))
\leq \exp(a|x_0|)C_t
$$
where $C_t$ is the finite constant given by $C_t:=P_t\exp(a(|\cdot|+1))$.
Finally, by the Leibniz rule we have that $(P_tf)'=P_tf'$ and $(P_tf)''=P_tf''$ implying that $(P_tf)',(P_tf)''\in \Bcal$ and thus $P_tf\in \Dcal$.

\ref{itnew2}
The continuity at $t>0$ is clear. For the continuity in 0,
fix $f\in \Bcal$ and note that it is always possible to find some $f_0\in C_0(\R)$ and some $f_c\in C(\R)$ such that $f=f_0+f_c$ and   $f_c(x)=0$ for each $x\in B_\e(x_0)$ for some $\e>0$. Since $\Ncal(x_0,t)$ converges weakly to the Dirac measure in $x_0$ for $t$ going to 0, we already know that $\lim_{t\to0}P_tf_0(x_0)=f_0(x_0)=f(x_0)$. Since by the dominated convergence theorem we also have
$$\lim_{t\to0}P_tf_c(x_0)=\lim_{t\to0}\frac 1 {\sqrt{2t\pi}}\int_{\R\setminus B_\e(x_0)}  f_c(x)\exp\Big(-\frac{(x-x_0)^2}{2t}\Big)dx
=0,$$
we can conclude that $\lim_{t\to0}P_tf(x_0)=P_0f(x_0)$.

\ref{itnew3} The claim follows as there is an $a\in \R$ such that $\int_0^t P_s|g|(x_0) ds$ is bounded by
\begin{align*}
 \int_0^t P_s\exp(a(|\fdot|+1))(x_0) ds
\leq e^a\int_0^t P_s\exp(a(\fdot))(x_0)+ P_s\exp(-a(\fdot))(x_0)ds
<\infty.
\end{align*}

\ref{itnew4}
Observe that for each $f\in \Dcal$ by the It\^o formula the process $M^f$ given by
$$M^f_t:=f(X_t)-f(x_0)-\int_0^t \frac 1 2 f''(X_s) ds=\int_0^t f'(X_s)dX_s,$$
is a local martingale. 
By definition of $\Bcal$ we have that $g^2\in \Bcal$ for each $g\in \Bcal$. This in particular implies that $(f')^2\in \Bcal$ and using \ref{itnew3} we can thus compute
$$\E[|\sup_{t\in [0,T]}M^f_t|^2]
\leq C\int_0^T P_s(|f'|^2)(x_0)ds<\infty,
$$
showing that $M^f$ is a true martingale. As $\Acal f=\frac 1 2 f''\in \Bcal$, by \ref{itnew3} we know that $\int_0^t P_s|\Acal f|(x_0)ds$ is finite.
Taking expectations of $M_t^f$ and applying Fubini we thus get that
$P_tf(x_0)-f(x_0)=\int_0^t P_s\Acal f(x_0)ds$
for each $f\in \Dcal$.
To conclude, observe that by  \ref{itnew2} (together with the fundamental theorem of calculus) yields 
$\partial_s P_{s}f(x_0)
= P_s\Acal f(x_0)$ which we have already shown to coincide with $\Acal P_sf(x_0)$.
\endproof

\subsection{Proof of Lemma \ref{lemaffine}} \label{appB2}
\ref{itc1}
Note first that the exponential moment condition on $X$ implies that the affine transform formula, i.e., 
$$
\E[\exp((\varepsilon+iu)X_t)]=\exp(\phi(t,\varepsilon+iu)+\psi(t,\varepsilon+iu)x_0),$$
 holds for all $u\in [-\tau,\tau]$ and $t\in [0,T]$, which follows from Theorem 3.7 in \cite{spreij2010affine}.
By Fubini's theorem we have that
$$P_th_\u(x)=\E_x\Big[\int_{-\tau}^\tau \exp((\varepsilon+iu)X_t)g(u) du\Big]
=\int_{-\tau}^\tau \exp(\phi(t,\varepsilon+iu)+\psi(t,\varepsilon+iu)x)g(u) du.$$
We now apply Morera's theorem and show that
$
\int_{\triangle} P_t h_{\u}(x) dx=0
$
for any triangle $\triangle$ in $\mathbb{C}$, which implies that $P_t h_{\u}$
is entire. First note that 
$ x \mapsto P_th_{\u}(x)$ can be extended to $\mathbb{C}$ since this is the case for 
$x\mapsto \exp(\phi(t,\varepsilon+iu)+\psi(t,\varepsilon+iu)x)g(u)$.
Again, by Fubini's theorem
we have
\[
 P_t h_{\u}(x) dx= \int_{-\tau}^\tau \int_{\triangle} \exp(\phi(t,\varepsilon+iu)+\psi(t,\varepsilon+iu)x) dx g(u) du=0,
\]
where the last equality follows from Goursat's theorem and the fact that $ x \mapsto \exp(\phi(t,\varepsilon+iu)+\psi(t,\varepsilon+iu)x) $ is entire.

\ref{itc2} For this property we only need that $h_{\u} \in \mathcal{D}$ which implies that $h_{\u}, h'_{\u}, h''_{\u}$ can be bounded by some polynomial of some degree $n$.  As diffusion and drift coefficient of affine diffusions are affine, this yields  that $\Acal h_{\u}\in \Ecal$ and
the assertion then follows by the polynomial property of the affine process $X$ (see e.g., Lemma 2.17 \cite{CKT:12}).

\ref{itc3} Here, we need again only that $h_{\u} \in \mathcal{D}$. 
By dominated convergence it holds that
\[
P_t \mathcal{A}h_{\u}(x_0)=\mathbb{E}[\lim_{N \to \infty} -N \vee (\mathcal{A}h_{\u} \wedge N)]=\lim_{N \to \infty} \mathbb{E}[-N \vee (\mathcal{A}h_{\u} \wedge N)]
\]
and by the Feller property of affine processes (see e.g.,  Theorem 2.7 in \cite{DFS:03})  $x_0 \mapsto \mathbb{E}[-N \vee (\mathcal{A}h_{\u} \wedge N)|X_0=x_0] $
is continuous. 
Now observe that
$$|P_t\Acal h_\u(x_0)-\E[-N \vee (\Acal h_\u(X_t)\wedge N]|
\leq 
\E[|\Acal h_\u(X_t)|1_{\{ |\Acal h_\u(X_t)|>N\}}],$$
which by Cauchy-Schwarz and Markov's inequality can be bounded by
\begin{align*}
    &\E[|\Acal h_\u(X_t)|^2 ]^{1/2}\mathbb P(\{|\Acal h_\u(X_t)|>N\} )^{1/2}
    \leq 
\E[|\Acal h_\u(X_t)|^2 ]^{1/2}
\frac{\E_x[|\Acal h_\u(X_t)|]^{1/2}}{N^{1/2}}.
\end{align*}
By the polynomial property of $X$, proceeding as in \ref{itc2} we get that the convergence is uniform on compacts and thus $x \mapsto P_t\Acal h_\u(x)$ is continuous. 

\ref{itc4} Similarly, again by the polynomial property of $X$ we get that for each $x\in S$, $P_t|\Acal h_\u|(x)$ is bounded by a continuous function in $t$ and thus $\int_0^TP_t|\Acal h_\u|(x) dt<\infty$ for each $x\in S$.  The same argument implies also that $\int_0^T\int_0^TP_tP_s|\Acal h_\u|(x)dsdt<\infty$. 
\endproof
\end{appendix}

\end{document}